\begin{document}

\title[A higher Grothendieck construction]{A higher Grothendieck construction}
\author[A. Sharma]{Amit Sharma}
\email{asharm24@kent.edu}
\address {Department of mathematical sciences\\ Kent State University\\
  Kent,OH}

\date{July 18, 2020}
%

\newcommand{\CONT}{\noindent}
\newcommand{\FIG}{Fig.\ }
\newcommand{\FIGS}{Figs.\ }
\newcommand{\SEC}{Sec.\ }
\newcommand{\SECS}{Secs.\ }
\newcommand{\TAB}{Table }
\newcommand{\TABS}{Tables }
\newcommand{\EQ}{Eq.\ }
\newcommand{\EQS}{Eqs.\ }
\newcommand{\APP}{Appendix }
\newcommand{\APPS}{Appendices }
\newcommand{\CHP}{Chapter }
\newcommand{\CHPS}{Chapters }

\newcommand{\OFF}{\emph{G2off}~}
\newcommand{\TOO}{\emph{G2Too}~}
\newcommand{\CatS}{Cat_{\bigS}}
\newcommand{\PicS}{{\underline{\pic}}^{\oplus}}
\newcommand{\HPicS}{{Hom^{\oplus}_{\pic}}}

\newtheorem{thm}{Theorem}[section]
\newtheorem{lem}[thm]{Lemma}
\newtheorem{conj}[thm]{Conjecture}
\newtheorem{coro}[thm]{Corollary}
\newtheorem{prop}[thm]{Proposition}

\theoremstyle{definition}
\newtheorem{df}[thm]{Definition}
\newtheorem{nota}[thm]{Notation}

\newtheorem{ex}[thm]{Example}
\newtheorem{exs}[thm]{Examples}

\theoremstyle{remark}
\newtheorem*{note}{Note}
\newtheorem{rem}{Remark}
\newtheorem{ack}{Acknowledgments}

\newcommand{\ChI}{{\textit{\v C}}\textit{ech}}
\newcommand{\Ch}{{\v C}ech}

\newcommand{\ChZG}{hermitian line $0$-gerbe}
\renewcommand{\theack}{$\! \! \!$}

\newcommand{\ChG}{flat hermitian line $1$-gerbe}
\newcommand{\ChC}{hermitian line $1$-cocycle}
\newcommand{\ChGG}{flat hermitian line $2$-gerbe}
\newcommand{\ChCC}{hermitian line $2$-cocycle}
\newcommand{\id}{id}
\newcommand{\LC}{\mathfrak{C}}
\newcommand{\Coker}{Coker}
\newcommand{\Com}{Com}
\newcommand{\Hom}{Hom}
\newcommand{\Mor}{Mor}
\newcommand{\Map}{Map}
\newcommand{\alg}{alg}
\newcommand{\an}{an}
\newcommand{\Ker}{Ker}
\newcommand{\Ob}{Ob}
\newcommand{\Proj}{\mathbf{Proj}}
\newcommand{\topo}{\mathbf{Top}}
\newcommand{\kan}{\mathcal{K}}
\newcommand{\pkan}{\mathcal{K}_\bullet}
\newcommand{\Kan}{\mathbf{Kan}}
\newcommand{\pKan}{\mathbf{Kan}_\bullet}
\newcommand{\QCat}{\mathbf{QCat}}
\newcommand{\gp}{\mathcal{A}_\infty}
\newcommand{\mdl}{\mathcal{M}\textit{odel}}
\newcommand{\sSets}{\mathbf{\S}}
\newcommand{\Sets}{\mathbf{Sets}}
\newcommand{\sSetsM}{\mathbf{\S}^+}
\newcommand{\sSetsQ}{(\mathbf{\sSets, Q})}
\newcommand{\sSetsMQ}{(\mathbf{\sSetsM, Q})}
\newcommand{\sSetsK}{(\mathbf{\sSets, \Kan})}
\newcommand{\pSSets}{\mathbf{\sSets}_\bullet}
\newcommand{\pSSetsK}{(\mathbf{\sSets}_\bullet, \Kan)}
\newcommand{\pSSetsQ}{(\mathbf{\sSets_\bullet, Q})}
\newcommand{\cyl}{\mathbf{Cyl}}
\newcommand{\lin}{\mathcal{L}_\infty}
\newcommand{\Vect}{\mathbf{Vect}}
\newcommand{\Aut}{Aut}
\newcommand{\pic}{\mathcal{P}\textit{ic}}
\newcommand{\Dlin}{\pic}
\newcommand{\bigS}{\mathbf{S}}
\newcommand{\bigA}{\mathbf{A}}
\newcommand{\bhom}{\mathbf{hom}}
\newcommand{\bhomK}{\mathbf{hom}({\textit{K}}^+,\textit{-})}
\newcommand{\Bhom}{\mathbf{Hom}}
\newcommand{\bhomk}{\mathbf{hom}^{{\textit{k}}^+}}
\newcommand{\Dlino}{\pic^{\textit{op}}}
\newcommand{\lino}{\mathcal{L}^{\textit{op}}_\infty}
\newcommand{\lind}{\mathcal{L}^\delta_\infty}
\newcommand{\linK}{\mathcal{L}_\infty(\kan)}
\newcommand{\linC}{\mathcal{L}_\infty\text{-category}}
\newcommand{\linCs}{\mathcal{L}_\infty\text{-categories}}
\newcommand{\ainCs}{\text{additive} \ \infty-\text{categories}}
\newcommand{\ainC}{\text{additive} \ \infty-\text{category}}
\newcommand{\inC}{\infty\text{-category}}
\newcommand{\inCs}{\infty\text{-categories}}
\newcommand{\gS}{{\Gamma}\text{-space}}
\newcommand{\gSet}{{\Gamma}\text{-set}}
\newcommand{\ggS}{\Gamma \times \Gamma\text{-space}}
\newcommand{\gSs}{\Gamma\text{-spaces}}
\newcommand{\gSets}{\Gamma\text{-sets}}
\newcommand{\ggSs}{\Gamma \times \Gamma\text{-spaces}}
\newcommand{\gO}{\Gamma-\text{object}}
\newcommand{\gSCat}{{\Gamma}\text{-space category}}
\newcommand{\pss}{\mathbf{S}_\bullet}
\newcommand{\gSC}{{{{\Gamma}}\mathcal{S}}}
\newcommand{\gSCM}{{{{\Gamma}}\mathcal{S}^+}}
\newcommand{\pGSC}{{{{\Gamma}}\mathcal{S}}_\bullet}
\newcommand{\pGSCStr}{{{{\Gamma}}\mathcal{S}}_\bullet^{\textit{str}}}
\newcommand{\ggSC}{{\Gamma\Gamma\mathcal{S}}}
\newcommand{\gSD}{\mathbf{D}(\gSC^{\textit{f}})}
\newcommand{\sCat}{\mathbf{sCat}}
\newcommand{\pSCat}{\mathbf{sCat}_\bullet}
\newcommand{\gSetCat}{{{{\Gamma}}\mathcal{S}\textit{et}}}
\newcommand{\Dhom}{\mathbf{R}Hom_{\pic}}
\newcommand{\gop}{\Gamma^{\textit{op}}}
\newcommand{\fU}{\mathbf{U}}
\newcommand{\cDN}{\underset{\mathbf{D}[\textit{n}^+]}{\circ}}
\newcommand{\cDK}{\underset{\mathbf{D}[\textit{k}^+]}{\circ}}
\newcommand{\cDL}{\underset{\mathbf{D}[\textit{l}^+]}{\circ}}
\newcommand{\cD}{\underset{\gSD}{\circ}}
\newcommand{\cDT}{\underset{\gSD}{\widetilde{\circ}}}
\newcommand{\ppsSets}{\sSets_{\bullet, \bullet}}
\newcommand{\gdHom}{\underline{Hom}_{\gSD}}
\newcommand{\HomU}{\underline{Hom}}
\newcommand{\ominf}{\Omega_\infty}
\newcommand{\ev}{ev}
\newcommand{\cu}{C(X;\mathfrak{U}_I)}
\newcommand{\Sing}{Sing}
\newcommand{\AlgEin}{\A\textit{lg}_{\E_\infty}}
\newcommand{\SFunc}[2]{\mathbf{SFunc}({#1} ; {#2})}
\newcommand{\unit}[1]{\mathrm{1}_{#1}}
\newcommand{\liminj}{\varinjlim}
\newcommand{\limproj}{\varprojlim}
\newcommand{\HMapC}[3]{\mathcal{M}\textit{ap}^{\textit{h}}_{#3}(#1, #2)}
\newcommand{\tensPGSR}[2]{#1 \underset{\gSR}\wedge #2}
\newcommand{\pTensP}[3]{#1 \underset{#3}\wedge #2}
\newcommand{\MGCat}[2]{\underline{\map}_{\gSC}({#1},{ #2})}
\newcommand{\MGBoxCat}[2]{\underline{\map}_{\gSC}^{\Box}({#1},{ #2})}
\newcommand{\TensPFunc}[1]{- \underset{#1} \otimes -}
\newcommand{\TensP}[3]{#1 \underset{#3}\otimes #2}
\newcommand{\MapC}[3]{\mathcal{M}\textit{ap}_{#3}(#1, #2)}
\newcommand{\bHom}[3]{{#2}^{#1}}
\newcommand{\gn}[1]{\Gamma^{#1}}
\newcommand{\gnk}[2]{\Gamma^{#1}({#2}^+)}
\newcommand{\gnf}[2]{\Gamma^{#1}({#2})}
\newcommand{\ggn}[1]{\Gamma\Gamma^{#1}}
\newcommand{\Nat}{\mathbb{N}}
\newcommand{\partition}[2]{\delta^{#1}_{#2}}
\newcommand{\inclusion}[2]{\iota^{#1}_{#2}}
\newcommand{\EinQC}{\text{coherently commutative monoidal quasi-category}} 
\newcommand{\EinQCs}{\text{coherently commutative monoidal quasi-categories}}
\newcommand{\pHomCat}[2]{[#1,#2]_{\bullet}}
\newcommand{\CatHom}[3]{[#1,#2]^{#3}}
\newcommand{\pCatHom}[3]{[#1,#2]_\bullet^{#3}}
\newcommand{\EinC}{\text{coherently commutative monoidal category}}
\newcommand{\EinCs}{\text{coherently commutative monoidal categories}}
\newcommand{\EinLO}{E_\infty{\text{- local object}}}
\newcommand{\EinSLO}{\E_\infty\S{\text{- local object}}}
\newcommand{\Ein}{E_\infty}
\newcommand{\EinS}{E_\infty{\text{- space}}}
\newcommand{\EinSs}{E_\infty{\text{- spaces}}}
\newcommand{\PCat}{\mathbf{Perm}}
\newcommand{\nor}[1]{{#1}^\textit{nor}}
\newcommand{\pSSetsHom}[3]{[#1,#2]_\bullet^{#3}}
\newcommand{\PNat}{\overline{\L}}
\newcommand{\PStr}{\L}
\newcommand{\Gn}[1]{\Gamma[#1]}
\newcommand{\GIH}{\Gamma\textit{H}_{\textit{in}}}
\newcommand{\QStr}[1]{\L_\bullet(\ud{#1})}
\newcommand{\QStrF}{\L_\bullet}
\newcommand{\Kbar}{\overline{\K}}
\newcommand{\gPerm}{{\Gamma\PCat}}
\newcommand{\gCat}{{\Gamma\Cat}}
\newcommand{\MapS}[3]{\map_{#3}(#1, #2)}
\newcommand{\sSetsMG}{\sSetsM / N(\gop)}
\newcommand{\sSetsMGen}[1]{\sSetsM / N(#1)}
\newcommand{\pF}{\mathfrak{F}_\bullet^+(\gop)}
\newcommand{\pN}{{\textit{N}}_\bullet^+(\gop)}
\newcommand{\pFX}[1]{\mathfrak{F}_{#1}^+(\gop)}
\newcommand{\pNX}[1]{{\textit{N}}_{#1}^+(\gop)}
\newcommand{\nGop}{N(\gop)}
\newcommand{\sSetsMGSM}{(\sSetsM/ N(\gop), \otimes)}
\newcommand{\sSetsGen}[1]{\sSets/ #1}
\newcommand{\ovCatGen}[2]{#1/ #2}
\newcommand{\coMdl}[1]{(\sSets/ #1, \mathbf{L})}
\newcommand{\gCLM}[1]{\mathfrak{L}^+_{#1}}
\newcommand{\gCL}[1]{\mathfrak{L}_{#1}}
\newcommand{\pFGen}[2]{\mathfrak{F}_{#1}^+(#2)}

\def\Pic{\mathbf{2}\mathcal P\textit{ic}}
\def\nc{\mathbb C}

\def\Z{\mathbb Z}
\def\P{\mathbb P}
\def\J{\mathcal J}
\def\I{\mathcal I}
\def\nC{\mathbb C}
\def\H{\mathcal H}
\def\A{\mathcal A}
\def\C{\mathcal C}
\def\D{\mathcal D}
\def\E{\mathcal E}
\def\G{\mathcal G}
\def\B{\mathcal B}
\def\L{\mathcal L}
\def\U{\mathcal U}
\def\K{\mathcal K}
\def\El{\mathcal E{\textit{l}}}

\def\M{\mathcal M}
\def\O{\mathcal O}
\def\R{\mathcal R}
\def\S{\mathcal S}
\def\N{\mathcal N}

\newcommand{\undertilde}[1]{\underset{\sim}{#1}}
\newcommand{\abs}[1]{{\lvert#1\rvert}}
\newcommand{\mC}[1]{\mathfrak{C}(#1)}
\newcommand{\sigInf}[1]{\Sigma^{\infty}{#1}}
\newcommand{\x}[4]{\underset{#1, #2}{ \overset{#3, #4} \prod }}
\newcommand{\mA}[2]{\textit{Add}^n_{#1, #2}}
\newcommand{\mAK}[2]{\textit{Add}^k_{#1, #2}}
\newcommand{\mAL}[2]{\textit{Add}^l_{#1, #2}}
\newcommand{\Mdl}[2]{\L_\infty}
\newcommand{\inv}[1]{#1^{-1}}
\newcommand{\Lan}[2]{\mathbf{Lan}_{#1}(#2)}

\newcommand{\del}{\partial}
\newcommand{\sCatO}{\mathcal{S}Cat_\O}
\newcommand{\FCgop}{\mathbf{F}\mC{N(\gop)}}
\newcommand{\hProd}{{\overset{h} \oplus}}
\newcommand{\hProdn}{\underset{n}{\overset{h} \oplus}}
\newcommand{\hProdk}[1]{\underset{#1}{\overset{h} \oplus}}
\newcommand{\map}{\mathcal{M}\textit{ap}}
\newcommand{\SMGS}[2]{\map_{\gSC}({#1},{ #2})}
\newcommand{\MGS}[2]{\underline{\map}_{\gSC}({#1},{ #2})}
\newcommand{\MGSBox}[2]{\underline{\map}^{\Box}_{\gSC}({#1},{ #2})}
\newcommand{\Aqcat}[1]{\underline{#1}^\oplus}
\newcommand{\Cat}{\mathbf{Cat}}
\newcommand{\Sp}{\mathbf{Sp}}
\newcommand{\SpStb}{\mathbf{Sp}^{\textit{stable}}}
\newcommand{\SpStr}{\mathbf{Sp}^{\textit{strict}}}
\newcommand{\Sspec}{\mathbb{S}}
\newcommand{\ud}[1]{\underline{#1}}
\newcommand{\inrt}{\mathbf{Inrt}}
\newcommand{\act}{\mathbf{Act}}
\newcommand{\StrSMHom}[2]{[#1,#2]_\otimes^{\textit{str}}}
\newcommand{\Sh}[1]{{#1}^\sharp}
\newcommand{\Fl}[1]{{#1}^\flat}
\newcommand{\Nt}[1]{{#1}^\natural}
\newcommand{\Flmap}[3]{\Fl{\left[#1, #2\right]}_{#3}}
\newcommand{\Shmap}[3]{\Sh{\left[#1, #2 \right]}_{#3}}
\newcommand{\mRN}[1]{\int^{n^+ \in \gop}{#1}^+}
\newcommand{\mRNGen}[2]{\int_{+}^{d \in {#2}}{#1}}
\newcommand{\mRNL}[1]{{\mathfrak{F}}^+_{#1}(\gop)}
\newcommand{\mRNGenL}[2]{{\mathfrak{F}}^+_{#1}(#2)}
\newcommand{\mapG}[2]{[#1, #2]_{\gop}^+}
\newcommand{\mapFl}[2]{\Fl{[#1, #2]}}
\newcommand{\mapSh}[2]{\Sh{[#1, #2]}}
\newcommand{\mapMS}[2]{[#1, #2]^+}
\newcommand{\ExpG}[2]{{\left({#2}\right)}^{[#1]}}
\newcommand{\expG}[2]{{{#2}}^{[#1]}}
\newcommand{\mapGen}[3]{[#1, #2]_{#3}}
\newcommand{\mapGenM}[3]{[#1, #2]^+_{#3}}
\newcommand{\rNGen}[2]{\int^{d \in {#2}}{#1}}
\newcommand{\rNGenL}[2]{\mathfrak{L}_{#2}(#1)}
\newcommand{\rN}[1]{\int^{n^+ \in \gop} {#1}}
\newcommand{\rNGenR}[1]{\mathfrak{R}_{#1}}
\newcommand{\mRNGenR}[1]{\mathfrak{R}^{+}_{#1}}
\newcommand{\cn}[1]{\mathbf{id}(d)_{{#1}}}
\newcommand{\cnGen}[2]{\mathbf{id}(#2)_{{#1}}}
\newcommand{\piSSGen}[1]{\left( \sSetsMGen{#1} \right)^{\pi_0}}
\newcommand{\colim}[1]{{\varinjlim}^{#1}}
\newcommand{\hColim}[1]{{\varinjlim}^{\textit{h}}_{#1}}

\begin{abstract}
 The main objective of this paper is to construct a homotopy colimit functor on a category of functors taking values in the model category of quasi-categories.
\end{abstract}

\maketitle

\tableofcontents

\section[Introduction]{Introduction}
\label{Introduction}
The Grothendieck construction is ubiquitous in category theory. This construction associates to a (pseudo) functor $F:D \to \Cat$, a (op)fibration over the (small) category $D$. The construction  establishes an equivalence and therefore allows us to switch between $\Cat$-valued functors and fibrations. In this paper we want to extend the classical Grothendieck construction to functors taking value in the model category of quasi-categories with the aim of establishing an equivalence between a category of $\sSets$-valued functors and appropriately defined (simplicial) fibrations over the nerve of the domain category of functors. The main objective of this paper is to use the aforementioned equivalence to construct a \emph{homotopy colimit} functor for functors taking value in the the model category of quasi-categories. We will be primarily working with the adaptation of the model category structure of quasi-categories on $\sSets$, \cite{AJ1}, \cite{AJ2}, to marked simplicial sets which is reviewed in appendix \ref{mar-sSets}.

It is well known that a \emph{left fibration} of simplicial sets over the nerve of a (small) category $N(D)$ is determined upto equivalence by a homotopy coherent diagram taking values in (a higher category of) Kan complexes, see \cite[5.3]{DC}. The same holds for \emph{coCartesian fibration} of simplicial sets over $N(D)$ with respect to homotopy coherent diagrams taking values in (a higher category of) quasi-categories, see \cite[Ch. 3]{JL}. In this paper we show that the aforementioned homotopy coherent diagrams can be rectified \emph{i.e.} upto equivalence they can be replaced by an honest functor. More precisely, we will show that for each coCartesian fibration $p:X \to N(D)$, there exists a (honest) functor $Z:D \to \sSets$, taking values in quasi-categories whose \emph{Grothendieck construction}, denoted $\rNGen{Z}{D}$, is equivalent to the fibration $p$ in a suitably defined model category structure on $\sSetsMGen{D}$. Such a result first appeared 
in \cite[Ch. 3]{JL} where the author defines an extension of the classical Grothendieck construction called \emph{(marked) relative nerve} which determines a functor $N^+_\bullet(D):[D, \sSetsM] \to \sSetsMGen{N(D)}$. This functor is shown to be the right Quillen functor of a Quillen adjunction between the \emph{coCartesian} model category structure on $\sSetsMGen{D}$ and the projective model category structure on $[D, \sSetsMQ]$. In this paper we construct a simplicial space (bisimplicial set) from a simplicial sets valued functor. We show that the zeroth row of this bisimplicial set is isomorphic to the relative nerve of the original functor. Our approach to this construction is to directly extend the Grothendieck construction of a $\Cat$ valued functor to functors taking value in (a category of) quasi-categories. Our main objective in this paper is to use the (higher) Grothendieck construction to construct a homotopy colimit functor. The drived functor of a homotopy colimit functor is a left adjoint which prompts us to write an \emph{approximation} to the (marked) relative nerve functor which is a left Quillen functor. Such an approximation to the (unmarked) relative nerve functor was constructed in \cite{HM} which is shown to be a Quillen equivalence between the \emph{covariant} model category structure on $\sSetsGen{N(D)}$, see \cite[Ch. 8]{AJ1} and the \emph{projective} model category structure on the functor category $[D, \sSetsK]$.

A colimit functor, defined on a category of functors $[D, \M]$ which inherits a \emph{projective} model category structure from the model category $\M$, is a left Quillen functor. However it is usually not a \emph{homotopical} functor.
A homotopy colimit functor is a homotopical functor whose derived functor is isomorphic to a left derived functor of a colimit functor. The homotopy colimit of a $\Cat$ valued functor is closely related to the Grothendieck construction. A homotopy colimit of a functor taking values in the Thomason model category of (small) categories, see \cite{Thomason2}, was defined in \cite{Thomason3}. In the same paper it was shown that the nerve of the Grothendieck construction is weakly equivalent to its homotopy colimit. The homotopy colimit of a functor taking values in the \emph{natural} model category of (small) categories is also related to the Grothendieck construction. In this case the homotopy colimit is obtained by inverting the \emph{horizontal} arrows of the Grothendieck construction of the functor. It is the author's understanding that this construction of the homotopy colimit first appeared in the works of \emph{Giraud}. The main objective of this paper is to extend Giraud's construction to functors taking value in the model category of quasi-categories. More precisely, we show that the homotopy colimit of such a functor is obtained by inverting the \emph{coCartesian} edges of its (higher) Grothendieck construction.

   \begin{ack}
   	The author would like to thank Andre Joyal for having useful discussions on the subject and for sharing his views on the notion of a higher Gothendieck construction.
   	\end{ack}
 \section{A higher Grothendieck construction }
\label{inf-cat-Gr-const}
In this section we will describe a \emph{higher Grothendieck construction} for quasi-categories. The classical Grothendieck construction defines a functor
\[
\rNGen{-}{D}:[D; \Cat] \to \ovCatGen{\Cat}{D} \overset{N} \to \ovCatGen{\sSets}{N(D)}
\]
We want to construct a \emph{left Kan extension} of the above functor along the Nerve functor $[D; N]:[D; \Cat] \to [D; \sSets]$ which we call a \emph{higher Grothendieck construction}. Our approach will be to directly extend the classical Grothendieck construction to the category of functors taking value in quasi-categories.

 Let $X:D \to \sSets$ be a functor. We recursively define a collection of simplicial sets as follows:
 \[
 \G^X_0(d) := X(d).
 \]
 For a map $f:d_1 \to d_2$ in $D$, we define a simplicial set $\G_1(f)$ by the following pullback square:
 \[
 \xymatrix@C=18mm{
 \G^X_1(f) \ar[r]^{p_2(f)} \ar[d]_{p_1(f)} & [\Delta[1]; X(d_2)] \ar[d]^{[d_1; X(d_2)] \times [d_0; X(d_2)]} \\
 [\Delta[0]; X(d_1)] \times [\Delta[0]; X(d_2)] \ar[r]_{ [\Delta[0]; X(f)] \times id} & [\Delta[0]; X(d_2)] \times [\Delta[0]; X(d_2)]
 }
 \]
 \begin{rem}
 	For each object $d \in D$ 
 	\[
 	\G^X_1(id_d) = [\Delta[1], X(d)].
 	\]
 	\end{rem}
 For a pair of maps $f_1:d_1 \to d_2$, $f_2:d_2 \to d_3$ in $D$, we define a simplicial set $\G^X_2(f_1, f_2)$ by the following pullback square:
  \[
 \xymatrix@C=18mm{
 	\G^X_2(f_1, f_2) \ar[r]^{p_2((f_1, f_2))} \ar[d]_{p_1((f_1, f_2))} & [\Delta[2]; X(d_3)] \ar[d]^{( [d_0; X(d_3)], [d_1; X(d_3)], [d_2; X(d_3)] ) } \\
 	\G_1^X(f_2) \times \G^X_1(f_2 f_1) \times  \G_1^X(f_1)  \ar[r]_{\ \ \ \ \ \ \ \ F_3 \times F_2 \times F_1 } &  \underset{3} \prod [\Delta[1]; X(d_3)] 
 }
 \]
 where $F_1$ is the composite map:
 \[
 \G_1^X(f_1) \overset{p_2(f_1)} \to [\Delta[1]; X(d_2)] \overset{[\Delta[1]; X(f_2)]} \to [\Delta[1]; X(d_3)],
 \]
 $F_3 = p_2(f_2)$, $F_2 = p_2(f_2f_1)$  and $p_1(f_1, f_2) = (p_1(f_1), p_1(f_2), p_1(f_2f_1))$.
 \begin{rem}
 	For each $f \in Mor(D)$, the simplicial sets $\G^X_2((f, id))$ and $\G^X_2((id, f))$ are given by the following two pullback squares respectively:
 	\[
 	\xymatrix{
 		\G^X_2(f, id) \ar[r]^{p_2((f, id))} \ar[d]_{p_1((f, id))} & [\Delta[2]; X(d_2)] \ar[d]^{( [d_1; X(d_3)], [d_2; X(d_2)] ) }  \\
 		 \G^X_1(f) \times  \G_1^X(f)  \ar[r]_{ F_2 \times F_1 \ \ \ \  \ \ \ \ \ } & [\Delta[1]; X(d_2)] \times [\Delta[1]; X(d_2)] 
    }
 	\]
 	and
 	\[
 	\xymatrix@C=16mm{
 		\G^X_2(id, f) \ar[r]^{p_2((id, f))} \ar[d]_{p_1((id, f))} & [\Delta[2]; X(d_2)] \ar[d]^{( [d_0; X(d_2)], [d_1; X(d_2)], [d_2; X(d_2)] ) }  \\
 		\G^X_1(f) \times  \G_1^X(f) \times [\Delta[1], X(d_1)]  \ar[r]_{ F_3 \times F_2 \times F_1 \ \ \ \ \ \ \  } & [\Delta[1]; X(d_2)] \times [\Delta[1]; X(d_2)] \times [\Delta[1]; X(d_2)] 
 	}
 	\]
 	\end{rem}
For an $n$-tuple $\sigma =(f_1, f_2, \dots, f_n) \in (N(D))_n$, we define a simplicial set $\G^X_n(\sigma)$ by the following pullback square:

\begin{equation}
\label{P-n}
\xymatrix@C=22mm{
	\G^X_n(\sigma) \ar[r]^{p_2(\sigma)} \ar[d]_{p_1(\sigma)} & [\Delta[n]; X(d_{n+1})] \ar[d]^{H } \\
  \overset{n}{\underset{i=0} \prod}\G_{n-1}^X(d_i(\sigma))  \ar[r]_{ F_{n+1} \times \cdots F_1 \ \ } &  \underset{n} \prod [\Delta[n-1]; X(d_{n+1})] 
}
\end{equation}
where $H = ([d_{0}; X(d_{n+1})], [d_{1}; X(d_{n+1})], \dots,  [d_{n}; X(d_{n+1})])$ and for $2 \le i \le n+1$ the simplicial map $F_i$ is the following composite:
 \[
\G_{n-1}^X(d_{i}(\sigma)) \overset{p_2(d_{i}(\sigma))} \to [\Delta[n-1];X(d_{n+1})] 
 \]
The map $F_1$ is the following composite
\[
\G_{n-1}^X(d_{n}(\sigma)) \overset{p_2(d_n(\sigma))} \to [\Delta[n-1];X(d_{n})] \overset{[\Delta[n-1];X(f_n )]} \to [\Delta[n-1];X(d_{n+1})]
 \]
 \begin{rem}
 	\label{can-simplex-Gerbe}
 	For the canonical simplex $\sigma = \cn{n}$, see definition \ref{can-simplices}, the simplicial set
 	\[
 	\G^X_n(\cn{n}) = [\Delta[n], X(d_{n+1})].
 	\]
 	\end{rem}
\begin{df}
	\label{One-Gerbe-over-n-simplex}
	For a pair consisting of an $n$-simplex $\sigma \in N(D)_n$ and a functor $X:D \to \sSets$, we will refer to $\G^X_n(\sigma)$ as the $1$-\emph{Gerbe over }$\sigma$ determined by $X$.
	\end{df}

\begin{prop}
	\label{inc-degen-high-fibs}
	For each $(n-1)$-simplex $\rho$ in $N(D)$ there is an inclusion map
	\[
	\iota^j_\rho:\G_{n-1}^X(\rho) \to \G_{n}^X(s_j(\rho))
	\]
	where $s_j$ is the $j$th degeneracy operator of $N(D)$ for $1 \le j \le n$. 
	\end{prop}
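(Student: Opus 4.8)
The plan is to build $\iota^j_\rho$ by induction on $n$ using the universal property of the pullback square \eqref{P-n} that defines $\G^X_n(s_j(\rho))$. I will write $d_i$ and $s_j$ simultaneously for the face and degeneracy operators of $N(D)$ and for the cofaces $\Delta[n-1]\to\Delta[n]$ and codegeneracies $\Delta[n]\to\Delta[n-1]$ of the standard simplices, so that $[d_i;X(d)]$ is the $i$th face on function complexes and $[s_j;X(d)]\colon[\Delta[n-1];X(d)]\to[\Delta[n];X(d)]$ is the $j$th degeneracy; since $[s_j;X(d)]$ is split by $[d_j;X(d)]$ it is a monomorphism. In the base case $n=1$ the simplex $\rho$ is an object $d$ and $s_j(\rho)=\id_d$, so by the identity $\G^X_1(\id_d)=[\Delta[1],X(d)]$ noted above the required map is the degeneracy $\G^X_0(d)=X(d)=[\Delta[0];X(d)]\to[\Delta[1];X(d)]=\G^X_1(\id_d)$, which is a split monomorphism.

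For the inductive step, to produce a map into the pullback $\G^X_n(s_j(\rho))$ it suffices to give its two legs and check they agree after projecting to $\prod_{n}[\Delta[n-1];X(d_{n+1})]$. For the leg to $[\Delta[n];X(d_{n+1})]$ I take the composite $[s_j;X(d_{n+1})]\circ p_2(\rho)$; here I use that the terminal vertex of $s_j(\rho)$ agrees with that of $\rho$ and that, when the inserted identity is the last edge, the twisting map $F_1$ degenerates because $X$ preserves identities. For the leg to $\prod_{i=0}^n\G^X_{n-1}(d_i(s_j(\rho)))$ I proceed componentwise according to the simplicial identities governing $d_i s_j$: for the two indices with $d_i s_j=\id$ the face $d_i(s_j(\rho))$ equals $\rho$ and I use the identity of $\G^X_{n-1}(\rho)$; for every other $i$ the face $d_i(s_j(\rho))$ is a degeneracy $s_{j'}(d_{i'}(\rho))$ of a face of $\rho$, and I use the composite of the corresponding projection of $p_1(\rho)\colon\G^X_{n-1}(\rho)\to\prod\G^X_{n-2}(d_{i'}(\rho))$ with the inclusion $\iota^{j'}_{d_{i'}(\rho)}$ supplied by the inductive hypothesis.

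The hard part will be the verification that these two legs are compatible over $\prod_{n}[\Delta[n-1];X(d_{n+1})]$, i.e. that the outer square commutes. This reduces to two bookkeeping facts: the cosimplicial identities expressing each composite $d_i\circ s_j$ of a coface with the codegeneracy on $\Delta[n]$ as either an identity or a coface-after-codegeneracy, and the compatibility of the inductively constructed inclusions with the projections $p_2$, namely $p_2(s_{j'}(\cdot))\circ\iota^{j'}_{(\cdot)}=[s_{j'};X(-)]\circ p_2(\cdot)$, which must be carried along as part of the induction. Matching the special map $F_1$ in the case where the degeneracy meets the last edge is the one place that needs separate care, and it is resolved exactly by $X(\id)=\id$. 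Finally, $\iota^j_\rho$ is a monomorphism: composing it with $p_1(s_j(\rho))$ and then projecting to the factor $\G^X_{n-1}(d_j(s_j(\rho)))=\G^X_{n-1}(\rho)$ recovers the identity, so $\iota^j_\rho$ is a split monomorphism and hence the desired inclusion.
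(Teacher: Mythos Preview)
Your argument is correct and, at the level of strategy, matches the paper: both construct $\iota^j_\rho$ as the unique map into the pullback \eqref{P-n} with top leg $[s_j;X(d_{n+1})]\circ p_2(\rho)$. The difference is in how the left leg to $\prod_{i}\G^X_{n-1}(d_i(s_j(\rho)))$ is described. The paper simply calls this map $i_j$ and says it is ``the inclusion into the $j$th component, namely $\G^X_{n-1}(d_js_j(\rho))=\G^X_{n-1}(\rho)$'', with no further comment. Taken literally that is not a definition of a map into a product, so your inductive construction of the remaining coordinates---identity on the two factors with $d_is_j=\id$, and $\iota^{j'}_{d_{i'}(\rho)}$ precomposed with the appropriate projection of $p_1(\rho)$ on the others---is exactly the content the paper leaves implicit. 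Your insistence on carrying the compatibility $p_2(s_{j'}(\cdot))\circ\iota^{j'}_{(\cdot)}=[s_{j'};X(-)]\circ p_2(\cdot)$ through the induction is what makes the outer square commute, and your split-mono argument via projection to the $j$th factor recovers precisely the component the paper singles out. In short: same idea, but you have supplied the induction that the paper's one-line proof presupposes.
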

\begin{proof}
	The simplicial map $\iota^j_\rho$ is the unique map into the pullback shown in the following diagram:
	\begin{equation*}
	\xymatrix@C=16mm{
	\G_{n-1}^X(\rho) \ar[rr]^{p_2(\rho)} \ar@/_2pc/[rdd]_{i_j} \ar[rd]^{\iota^j_\rho} &&  [\Delta[n-1]; X(d_{n+1})] \ar[d]^{[s_j, X(d_{n+1})]} \\
	&\G^X_n(s_j(\rho)) \ar[r]^{p_2(s_j(\rho))} \ar[d]_{p_1(s_j(\rho))} & [\Delta[n]; X(d_{n+1})] \ar[d]^{H } \\
	&\overset{n}{\underset{i=0} \prod}\G_{n-1}^X(d_i(s_j(\rho)))  \ar[r]_{ F_{n+1} \times \cdots F_1 \ \ } &  \underset{n} \prod [\Delta[n-1]; X(d_{n+1})] 
     }
	\end{equation*}
	where $i_j$ is the inclusion into the $j$th component namely $\G_{n-1}^X(d_js_j(\rho)) = \G_{n-1}^X(\rho)$.
	\end{proof}
\begin{prop}
There is a simplicial space i.e. a functor $\left(\rNGen{X}{D}\right)_\bullet:\Delta^{op} \to \sSets$ whose degree $n$ simplicial-set is defined as follows:
\[
\left(\rNGen{X}{D}\right)_\bullet([n]):= \underset{\sigma \in (N(D))_n} \sqcup \lbrace \sigma \rbrace \times \G^X_n(\sigma)
\]
 \end{prop}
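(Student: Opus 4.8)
The plan is to equip the degreewise assignment with face and degeneracy operators and then to verify the simplicial identities; since $\Delta^{op}$ is generated by its cofaces and codegeneracies subject only to the cosimplicial relations, this suffices to promote the assignment to a functor $\left(\rNGen{X}{D}\right)_\bullet:\Delta^{op}\to\sSets$. In each degree the components are indexed by the nerve $N(D)$, so every operator will act on the marking $\sigma$ through the corresponding operator of the simplicial set $N(D)$, and on the fibre $\G^X_n(\sigma)$ through a canonical map supplied by the constructions preceding this statement.

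Concretely, for $0\le i\le n$ I would let the face operator $d_i$ restrict on the component $\{\sigma\}\times\G^X_n(\sigma)$ to the composite of $p_1(\sigma)$ with the projection of $\prod_{i=0}^{n}\G^X_{n-1}(d_i(\sigma))$ onto its $i$-th factor, sending it into the component $\{d_i(\sigma)\}\times\G^X_{n-1}(d_i(\sigma))$; this is well typed because that factor is precisely the fibre over $d_i(\sigma)\in (N(D))_{n-1}$. Dually, the degeneracy operator $s_j$ restricts on $\{\rho\}\times\G^X_{n-1}(\rho)$ to the inclusion $\iota^j_\rho:\G^X_{n-1}(\rho)\to\G^X_n(s_j(\rho))$ of Proposition \ref{inc-degen-high-fibs}, with image in $\{s_j(\rho)\}\times\G^X_n(s_j(\rho))$. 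Observe that the pushforwards along the $X(f_i)$ enter only the defining cones $F_1$ of the pullbacks \eqref{P-n}, and not the face operators, which are bare projections.

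The verification of the simplicial identities proceeds uniformly. On markings each relation is exactly the corresponding relation in the simplicial set $N(D)$, so both sides of a given identity land in the same component, and the content is the equality of the two induced maps of fibres. Since each fibre $\G^X_m(\tau)$ is the pullback \eqref{P-n} with legs $p_1(\tau)$ and $p_2(\tau)$, every such equality is established by checking that the two candidate maps agree after composition with $p_1(\tau)$ and with $p_2(\tau)$ and then invoking uniqueness: the $p_2$-legs reduce to restricting a simplex of some $[\Delta[m];X(d)]$ along a composite of cofaces or codegeneracies of $\Delta[\bullet]$, where agreement is a cosimplicial identity, while the $p_1$-legs reduce, through the recursion, to the same statement one degree lower. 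The single immediate case is $d_js_j=\id$, which is read straight off Proposition \ref{inc-degen-high-fibs}: by construction the composite of $\iota^j_\rho$ with $p_1(s_j(\rho))$ and the $j$-th projection is the canonical identification $\G^X_{n-1}(d_js_j(\rho))=\G^X_{n-1}(\rho)$.

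The hard part will be the remaining mixed relations $d_is_j=s_{j-1}d_i$ together with the degeneracy--degeneracy relations $s_is_j=s_{j+1}s_i$ (in their relevant index ranges), because the operators $\iota^j_\rho$ are characterised only implicitly, as the unique maps into the pullbacks \eqref{P-n}, and hence resist direct manipulation. For each of these the only available route is the universal-property argument above: show that the two composites coincide after post-composition with both legs $p_1$ and $p_2$ of the target pullback, forcing their equality. Carrying this out means tracking at once the face and degeneracy operators of $N(D)$, the cosimplicial structure of the simplices $\Delta[n]$ feeding the map $H$ on $[\Delta[n];X(d_{n+1})]$, and the pushforwards appearing in the cones $F_1$; the bookkeeping is the genuine labour, but it requires no idea beyond the universal properties already in play.
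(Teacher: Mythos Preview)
Your proposal is correct and follows exactly the paper's approach: the paper defines the $i$th face operator as the composite $\G^X_n(\sigma)\overset{p_1(\sigma)}\to\prod_{i}\G^X_{n-1}(d_i(\sigma))\overset{pr_i}\to\G^X_{n-1}(d_i(\sigma))$ and the degeneracies via the inclusions $\iota^j_\rho$ of Proposition~\ref{inc-degen-high-fibs}, precisely as you do. In fact you go further than the paper, which stops after exhibiting these operators and does not spell out the verification of the simplicial identities that you outline.
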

\begin{proof}
	We will define the degeneracy and face operators. Each $\G_n^X(\sigma)$ is equipped with a projection map
	\[
	d_i(p_1(\sigma)):\G_n^X(\sigma) \to \G_{n-1}^X(d_i(\sigma)) 
	\]
	For $i \in \lbrace 0, 1, 2, \dots,  n \rbrace$, this map is given by the following composite:
	\[
	\G^X_n(\sigma) \overset{p_1(\sigma)} \to \overset{n}{\underset{i=0} \prod}\G_{n-1}^X(d_i(\sigma)) \overset{pr_i} \to \G_{n-1}^X(d_i(\sigma)),
	\]
	where $f_n:d_n \to d_{n+1}$ is the last map in $\sigma = (f_1, \dots, f_n)$ and $pr_i$ are the obvious projections from the product. The maps $d_i(p_1(\sigma))$ join together to form a map
	\[
	d_i:\underset{\sigma \in (N(D))_n} \sqcup  \G^X_n(\sigma) \to \underset{\rho \in (N(D))_{n-1}} \sqcup  \G^X_{n-1}(\sigma)
	\]
	which is our $ith$ face operator for $0 \le i \le n$.
	
	The maps $\iota^j_\rho$ from proposition \ref{inc-degen-high-fibs} gives us the $ith$ degeneracy map
	\[
	s_j:\underset{\rho \in (N(D))_{n-1}} \sqcup  \G^X_{n-1}(\rho) \to \underset{\sigma \in (N(D))_{n}} \sqcup  \G^X_{n}(\rho)
	\]
	
	\end{proof}
\begin{nota}
	\label{Box-prod-sSets}
	Each pair $(K, L)$ of simplicial sets defines a \emph{bisimplicial sets} \emph{i.e.} a functor
	\[
	K \Box L: \Delta^{op} \times \Delta^{op}  \to \Sets
	 \]
	as follows:
	\[
	K \Box L([m],[n]) := K_m \times L_n
	\]
	\end{nota}
\begin{rem}
	\label{proj-over-Ner}
	The simplicial space $\left( \rNGen{X}{D} \right)_\bullet$ is equipped with a map of simplicial spaces:
	\[
	p^X_\bullet:\left( \rNGen{X}{D} \right)_\bullet \to N(D) \Box \Delta[0].
	\]
	\end{rem}
\begin{nota}
Each simplicial space $Z:\Delta^{op} \to \sSets$ determines a bisimplicial set, also denoted by $Z$

\[
Z:\Delta^{op} \times \Delta^{op} \to \Sets
\]
by $Z([m], [n]) = (Z[m])_n$. Further we denote the following simplicial set by $i_1^\ast(Z)$:
\[
\Delta \overset{(-, [0])}\to \Delta \times \Delta \overset{Z} \to \Sets
\]
\end{nota}
Now we can define the (total space of) the Grothendieck construction of $X:D \to \sSets$ as follows:
\begin{equation}
\rNGen{X}{D} = i^\ast_1 \left(\left( \rNGen{X}{D} \right)_\bullet\right)
\end{equation}
\begin{rem}
	\label{rep-Gr-cons}
	The set of $n$-simplices of $\rNGen{X}{D}$ can be represented as follows:
	\[
	\left( \rNGen{X}{D} \right)_n = \underset{\sigma \in N(D)_n} \sqcup \lbrace \sigma \rbrace \times \left( \G^X_n(\sigma)\right)_0.
	\]
	\end{rem}
\begin{rem}
	\label{desc-n-simp-Gr}
	An $n$-simplex $\delta$ of $\rNGen{X}{D}$ is a pair $\delta = (\sigma, \beta)$ where $\sigma = (f_1, f_2, \dots, f_n) \in N(D)_n$ and $ \beta \in \G^X_n(\sigma)$ \emph{i.e.} $\beta = (\ud{\beta},\beta)$. This pair consists of $(\ud{\beta_{n-1}}, \beta_{n-1})=\ud{\beta} \in \G^X_{n-1}(d_n(\sigma))$ and $\beta \in X(d_{n+1})_n$, where $f_n:d_n \to d_{n+1}$. The $n$-simplex $\delta$ satisfies the following two conditions:
	\begin{enumerate}
		\item $X(f_n)(p_2((\ud{\beta}))) = d_n(\beta)$.
		\item For $0 \le i \le n-2$
		\[
		(d_i(\ud{\beta}),d_i(\beta)) \in \G^X_{n-1}(d_i(\sigma)).
		\]
	\end{enumerate}
\end{rem}
\begin{rem}
	\label{zero-degree-comp}
	Let $\beta = (\ud{\beta},\beta) \in \G^X_n(\sigma)$, where $\sigma \in N(D)_n$ as in remark \eqref{desc-n-simp-Gr}. We observe that $d_n(\beta) = \ud{\beta} = (\ud{\beta_{n-1}},\beta_{n-1})$. Further, $d_{n-1}(\ud{\beta}) = \ud{\beta_{n-1}} = (\ud{\beta_{n-2}},\beta_{n-2}) \in \G^X_{n-2}(d_{n-2}(\sigma))$. Since $n$ is finite, there exists a $\beta_0 \in \G^X_{0}(d_1)$ such that
	\begin{equation}
	\label{beta-0}
	\beta_0 = d_1 \circ \cdots \circ d_{n-1} \circ d_n(\beta).
	\end{equation}
\end{rem}


The notion of \emph{relative nerve} was introduced in 
\cite[3.2.5.2]{JL}. Next we will review this notion:
\begin{df}
	\label{rel-Ner}
	  Let $D$ be a category, and $f : D \to \sSets$ a functor. The nerve of $D$ relative to $f$ is the simplicial set $N_f (D)$ whose $n$-simplices are sets consisting of:
	\begin{enumerate}
		\item[(i)] a functor $d:[n] \to D$; We write $d(i, j)$ for the image of $i \le j$ in $[n]$.
		\item[(ii)] for every nonempty subposet $J \subseteq [n]$ with maximal element $j$, a map $\tau^J:\Delta^J \to f(d(j))$,
		\item[(iii)] such that for nonempty subsets $I \subseteq J \subseteq [n]$ with respective maximal elements  $i \le j$, the following diagram commutes:
		\[
		\xymatrix{
			\Delta^I\ar[r]^{\tau^I} \ar@{_{(}->}[d] & f(d(i)) \ar[d]^{f(d(i, j))} \\
			\Delta^J\ar[r]_{\tau^J}  & f(d(j))
		}
		\]
	\end{enumerate}
\end{df}

For any $f$, there is a canonical map $p_f: N_f (D) \to N(D)$ down to the ordinary nerve of $D$, induced by the unique map to the terminal object $\Delta^0 \in \sSets$ \cite[ 3.2.5.4]{JL}. When $f$ takes values in quasi-categories, this canonical map is a coCartesian fibration.
\begin{rem}
	\label{Rel-Ner-edge}
	A vertex of the simplicial set $N_f(D)$ is a pair $(c, g)$, where $c \in Ob(D)$ and $g \in f(c)_0$.
	An edge $\ud{e}:(c, g) \to (d, k)$ of the simplicial set $N_f(D)$ consists of a pair $(e, h)$, where $e:c \to d$ is an arrow in $D$ and $h:f(e)_0(g) \to k$ is an edge of $f(d)$.
\end{rem}
An immidiate consequence of the above definition is the following proposition:
\begin{prop}
	\label{Rel-Ner-isom-func-val}
	Let $f:D \to \sSets$ be a functor, then the fiber of $p_f:N_f(D) \to N(D)$ over any $d \in Ob(D)$ is isomorphic to the simplicial set $f(d)$.
\end{prop}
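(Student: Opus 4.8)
The plan is to compute the fiber directly from Definition \ref{rel-Ner}. Recall that $p_f$ sends an $n$-simplex of $N_f(D)$ to its underlying functor $\delta:[n] \to D$, which is precisely an $n$-simplex of $N(D)$. Hence the fiber over a vertex $d \in \Ob(D)$ is, by definition of the pullback along the map $\Delta^0 \xrightarrow{d} N(D)$, the simplicial set whose $n$-simplices are exactly those $n$-simplices of $N_f(D)$ whose underlying functor is the constant functor $c_d:[n] \to D$ at $d$ (the iterated degeneracy of the vertex $d$). First I would specialize Definition \ref{rel-Ner} to this constant functor: since $c_d(j) = d$ for every $j$ and $c_d(i,j) = \mathrm{id}_d$ for every $i \le j$, every target $f(c_d(j))$ equals $f(d)$ and every transition map $f(c_d(i,j))$ equals $\mathrm{id}_{f(d)}$.

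Under this specialization the data of part (ii) becomes, for each nonempty subposet $J \subseteq [n]$, a map $\tau^J:\Delta^J \to f(d)$, and the coherence square of part (iii) collapses to the single condition $\tau^J \circ \iota_{I,J} = \tau^I$, where $\iota_{I,J}:\Delta^I \hookrightarrow \Delta^J$ is the inclusion. The key observation is that such a family is freely determined by its top piece: taking $J = [n]$ shows that $\tau^I = \tau^{[n]} \circ \iota_{I,[n]}$ is the restriction of $\tau^{[n]}:\Delta^n \to f(d)$ to $\Delta^I$ for every nonempty $I$, and conversely any single map $\Delta^n \to f(d)$ yields, by restriction, a coherent family $(\tau^J)_J$. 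Thus the assignment $(c_d, (\tau^J)_J) \mapsto \tau^{[n]}$ sets up a bijection between the $n$-simplices of the fiber and $\Hom_{\sSets}(\Delta^n, f(d))$, which by the Yoneda lemma is the set $f(d)_n$ of $n$-simplices of $f(d)$.

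It then remains to check that this levelwise bijection is natural in $[n] \in \Delta^{op}$, i.e. that it intertwines the simplicial operators of the fiber with those of $f(d)$. For a monotone map $\theta:[m]\to[n]$, the induced operator on $N_f(D)$ acts on the top component by precomposition, sending $\tau^{[n]}$ to $\tau^{[n]} \circ \Delta^\theta$, which is exactly the operator $\theta^\ast$ acting on $f(d)$ under Yoneda; here one uses that reindexing along $\theta$ carries the constant functor $c_d$ on $[n]$ to the constant functor on $[m]$, so the fiber is preserved by every operator. Assembling the bijections over all $[n]$ therefore yields the desired isomorphism of simplicial sets between the fiber over $d$ and $f(d)$, consistent with the description of vertices and edges in Remark \ref{Rel-Ner-edge}. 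I expect the only genuine bookkeeping to lie in this naturality verification; the collapse of the coherence conditions in the previous step is the conceptual heart of the argument and is immediate once the transition maps $f(c_d(i,j))$ are recognized as identities.
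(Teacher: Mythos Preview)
Your argument is correct. The paper does not supply a proof for this proposition at all; it merely records it as ``an immediate consequence'' of Definition~\ref{rel-Ner}, and your direct unwinding of that definition over the constant functor $c_d$---recognizing that condition~(iii) forces every $\tau^J$ to be the restriction of $\tau^{[n]}$---is exactly the routine verification the paper leaves to the reader.
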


The following lemma is a consequence of this definition and the above discussion:

\begin{lem}
	\label{isom-Rel-Ner}
	For each functor $X:D \to \sSets$, we have the following isomorphism in the category $\ovCatGen{\sSets}{N(D)}$:
	\[
	\rNGen{X}{D} \cong N_X(D).
	\]
	\end{lem}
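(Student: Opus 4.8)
The plan is to produce an isomorphism $\Phi \colon \rNGen{X}{D} \to N_X(D)$ in $\ovCatGen{\sSets}{N(D)}$ by constructing, for every $n$, a bijection on $n$-simplices that is natural in $[n]$ and commutes with the two projections to $N(D)$. By remark \ref{rep-Gr-cons} an $n$-simplex of $\rNGen{X}{D}$ is a pair $(\sigma, \beta)$ with $\sigma \in N(D)_n$ and $\beta \in \left(\G^X_n(\sigma)\right)_0$, while by definition \ref{rel-Ner} an $n$-simplex of $N_X(D)$ consists of a functor $d \colon [n] \to D$ together with a compatible family $\{\tau^J\}$. Since the underlying functor $d \colon [n] \to D$ is the same datum as the simplex $\sigma \in N(D)_n$, and both projections read off exactly this functor, it suffices to fix $\sigma$ and exhibit a bijection, natural in $\sigma$, between $\left(\G^X_n(\sigma)\right)_0$ and the set of compatible families $\{\tau^J\}_{\varnothing \neq J \subseteq [n]}$ of maps $\tau^J \colon \Delta^J \to X(d(\max J))$ attached to $\sigma$.

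The core step is a reduction of the notion of compatible family that mirrors the recursive pullback \eqref{P-n}. I would first observe that, for any nonempty $J \subseteq [n]$ containing $n$, condition (iii) applied to $J \subseteq [n]$ forces $\tau^J$ to be the restriction of $\tau^{[n]}$ along $\Delta^J \hookrightarrow \Delta[n]$; thus all such $\tau^J$ are redundant once $\tau^{[n]}$ is known. The remaining maps $\{\tau^J\}_{J \subseteq [n-1]}$ constitute a compatible family for the face $d_n(\sigma)$, whose last object is $d(n-1)$. Moreover I would check that the only surviving instances of (iii) linking the two halves are equivalent to the single equation $\tau^{[n]}|_{\Delta^{[n-1]}} = X(f_n) \circ \tau^{[n-1]}$: for $I \subsetneq [n-1]$ the corresponding instance follows from this one together with the internal compatibility of the family on $[n-1]$ and the functoriality of $X$. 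Hence a compatible family on $[n]$ is the same datum as an $n$-simplex $\tau^{[n]} \in X(d(n))_n$, a compatible family on $[n-1]$ for $d_n(\sigma)$, and one linking equation. This is precisely the recursive description of a vertex $\beta = (\ud{\beta}, \beta)$ of $\G^X_n(\sigma)$ recorded in remark \ref{desc-n-simp-Gr}, with $\tau^{[n]} = \beta$, the family on $[n-1]$ corresponding inductively to $\ud{\beta} \in \G^X_{n-1}(d_n(\sigma))$, and the linking equation being condition (1) there, namely $X(f_n)(p_2(\ud{\beta})) = d_n(\beta)$. I would therefore define $\Phi$ and its inverse by induction on $n$, the base case $n = 0$ being the identification $\left(\G^X_0(d)\right)_0 = X(d)_0$ underlying proposition \ref{Rel-Ner-isom-func-val}.

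It then remains to verify that $\Phi$ is simplicial. The face and degeneracy operators of $\rNGen{X}{D}$ are the maps induced by the projections $d_i(p_1(\sigma))$ and the inclusions $\iota^j_\rho$ of proposition \ref{inc-degen-high-fibs}, while those of $N_X(D)$ act by precomposing $d \colon [n] \to D$ with cofaces and codegeneracies and reindexing the family along the corresponding maps of subposets. I would check the face relations by unwinding both descriptions: discarding the data outside the $i$th facet on the Grothendieck side matches restricting the family $\{\tau^J\}$ to subposets of $[n] \setminus \{i\}$ on the relative nerve side; degeneracies are handled dually, using remark \ref{can-simplex-Gerbe} and the inclusions of proposition \ref{inc-degen-high-fibs}. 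Because $\Phi$ preserves the underlying simplex $\sigma$, it is a map over $N(D)$, and being a bijection in each degree it is the desired isomorphism.

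The main obstacle I anticipate is reconciling the two descriptions of the fibers, which are not symmetric: the recursive definition of $\G^X_n(\sigma)$ singles out the last face $d_n$, whereas condition (iii) of the relative nerve treats all faces on an equal footing. Establishing that the single linking equation at the top facet, combined with the inductive compatibility, regenerates every instance of (iii) is the delicate computation, and the same asymmetry makes the verification that $\Phi$ commutes with the non-final face operators the most error-prone part of the argument.
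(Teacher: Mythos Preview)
Your proposal is correct and follows essentially the same route as the paper's proof. Both arguments define the forward map by sending $(\sigma,\beta)$ to the family $\{\tau^J\}$ obtained by applying the simplicial structure map for the inclusion $J \hookrightarrow [n]$ and then the projection $p_2$, and both build the inverse recursively via the pair decomposition $\beta = (\underline{\beta},\beta)$ of remark~\ref{desc-n-simp-Gr}; your inductive reduction (eliminating the $\tau^J$ with $n \in J$ as redundant and isolating the single linking equation) is a slightly more explicit articulation of the same mechanism the paper uses.
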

\begin{proof}
	An $n$-simplex in $\rNGen{X}{D}$ is a pair $(\sigma, \beta)$, where $\sigma \in N(D)_n$. This $n$-simplex $\sigma$ can be viewed as a functor $\sigma:[n] \to D$. The inclusion of each non-empty subposet $i_J:J \subseteq [n]$ gives a map
	\[
	\left(\rNGen{X}{D} \right)(i_J):\left(\rNGen{X}{D} \right)_n \to \left(\rNGen{X}{D} \right)_J.
	\]
	We are using the fact that $J$ is isomorphic to an object of $\Delta$ which we also denote by $J$. The inclusion map can now be seen as a map in $\Delta$.
	This map gives us a $J$-simplex $\left(\rNGen{X}{D} \right)(i_J)((\sigma, \beta))$. Now the second projection map
	$p_2(\left(\rNGen{X}{D} \right)(i_J)((\sigma, \beta)))$ gives us a simplicial map:
	\[
	\Delta[J] \to X(\sigma(j'))
	\]
	where $j'$ is the maximal element of $J$. For an inclusion $J' \subseteq J$, condition $(iii)$ of definition \ref{rel-Ner} is satisfied because the composite $J' \subseteq J \subseteq [n]$ determines a composite map in $\Delta$. This defines a map $f:\left(\rNGen{X}{D} \right)_n \to N_X(D)_n$.
	Now we define the inverse map. An $n$-simplex $\gamma$ in $N_X(D)$ contains a functor $d:[n] \to D$ which uniquely determines an $n$-simplex $\sigma$ of $N(D)$. We recall that an $n$-simplex in $\rNGen{X}{D} $ is a pair $(\sigma, \beta)$ whose second component $\beta$ is a pair $(\ud{\beta}, \beta)$, where $\beta \in X(d(n))$. The $n$-simplex $\gamma$ contains a simplicial map
	\[
	\Delta[n] \to X(d(n)).
	\]
	We define $\beta$ to be the $n$-simplex of $X(d(n))$ which represents the above map. We have an inclusion $[n-1] \hookrightarrow [n]$ in $\Delta$. The $n$-simplex $\gamma$ contains another simplicial map
	\[
	\Delta[n-1] \to X(d(n-1)).
	\]
	We define $\ud{\beta}$ to be the pair $(\alpha, (\ud{\beta_{n-1}}, \beta_{n-1}))$ consisting of an $(n-1)$-simplex $\beta_{n-1}$ of $X(d(n-1))$ which represents the above map and $\alpha = d_n(\sigma)$. The first condition of remark \ref{desc-n-simp-Gr} is equivalent to the commutativity of the following diagram:
	\begin{equation*}
	\xymatrix{
	\Delta[n-1] \ar[r]^{\beta_{n-1}} \ar[d] & X(d(n-1)) \ar[d]^{X(d({n-1, n}))} \\
	\Delta[n] \ar[r]_{\beta}  & X(d(n))
    }
	\end{equation*}
The second condition of remark \ref{desc-n-simp-Gr} follows from definition \ref{rel-Ner} $(iii)$.
	\end{proof}

Next we will define a function object for the category $\sSetsGen{D}$.
We shall denote by $\mapGen{X}{Y}{D}$ the simplicial set of maps from $X$ to $Y$ in $\sSetsGen{D}$. An $n$-simplex in $\mapGen{X}{Y}{D}$ is a map $\Delta[n] \times X \to Y$ in $\sSetsGen{D}$, where $\Delta[n] \times (X, p) = (\Delta[n] \times X, pp_2)$, where $p_2$ is the projection $\Delta[n] \times X \to X$. The enriched category $\sSetsGen{D}$ admits tensor and cotensor products. The \emph{tensor product} of an object $X = (X, p)$ in $\sSetsGen{D}$ with a simplicial set $A$ is the objects
\[
A \times X = (A \times X, pp_2).
\]
The \emph{cotensor product} of $X$ by $A$ is an object of $\sSetsGen{D}$ denoted $\ExpG{A}{X}$. If $q:\ExpG{A}{X} \to \nGop$ is the structure map, then a simplex $x:\Delta[n] \to \ExpG{A}{X}$ over a simplex $y = qx:\Delta[n] \to N(D)$ is a map $A\times (\Delta[n], y) \to (X, p)$. The object $(\ExpG{A}{X}, q)$ can be constructed by the following pullback square in $\sSets$:
\begin{equation}
\label{co-tens-prod}
\xymatrix{
\ExpG{A}{X} \ar[r] \ar[d]_q & \mapGen{A}{X}{} \ar[d]^{\mapGen{A}{p}{}} \\
N(D) \ar[r] & \mapGen{A}{D}{}
}
\end{equation}
where the bottom map is the diagonal. There are canonical isomorphisms:
\begin{equation}
\mapGen{A \times X}{Y}{D} \cong \mapGen{A}{\mapGen{X}{Y}{D}}{} \cong \mapGen{X}{\ExpG{A}{Y}}{D}
\end{equation}
We now define a functor $\rNGenR{D}:\sSetsGen{D} \to [D, \sSets]$. For each $Y \in \sSetsGen{N(D)}$, the functor $\rNGenR{D}(Y)$ is defined as follows:
\begin{equation}
\label{right-adj-Gr}
\rNGenR{D}(Y)(d) := [N(\ovCatGen{d}{D}), Y]_D
\end{equation}
The contravariant functor $N(\ovCatGen{-}{D})$, see \eqref{rep-as-fib}, ensures that this defines a functor $\rNGenR{D}(Y):D \to \sSets$.

\begin{nota}
	For a simplicial map $p:X \to B$, we denote the \emph{fiber} of $p$ over an $n$-simplex $\sigma \in B_n$ by $X(\sigma)$. In other words, the simplicial set $X(\sigma)$ is defined by the following pullback square:
	\[
	\xymatrix{
		X(\sigma) \ar[r] \ar[d] & X \ar[d]^p \\
		\Delta[n] \ar[r]_\sigma & B 
	}
	\]
\end{nota}

 \section{Rectification of coCartesian fibrations}
\label{coCart-mdl-str}
In this section we will prove a \emph{rectification theorem} for \emph{coCartesian} fibrations of simplicial sets over the nerve of a small category $D$. It was shown in \cite[Ch. 3]{JL} that a coCartesian fibration over the nerve of a category is classified by a homotopy coherent diagram taking values in a (higher category of) quasi-categories. This result first appeared in \cite[]{JL}, wherein a version for marked simplicial sets of the higher Grothendieck construction functor constructed in the previous section is a right Quillen functor of a Quillen equivalence between the \emph{coCartesian} model category $\sSetsMGen{D}$ and the projective model category $[D, \sSetsMQ]$. However, in order to achieve our goal of constructing a homotopy colimit functor, we desire an \emph{approximation} of the higher Grothendieck construction functor whose derived functor is a left adjoint. More precisely, we will construct a left Quillen functor
\begin{equation*}
\gCLM{D}:\sSetsMQ \to [D, \sSetsMQ]
\end{equation*}
and show that its left derived functor is isomorphic to the right derived functor of $\mRNGen{-}{D}$. In order to establish this isomorphism we will first show that $\gCLM{D}$ is a left Quillen functor of a Quillen equivalence which we regard as the main result of this section. 
%
%
%
We begin with a review of coCartesin fibrations over the simplicial set $N(D)$. We will also review a model category structure on the category $\sSetsMGen{D}$ in which the fibrant objects are (essentially) coCartesian fibrations.
\begin{df}
	\label{p-CC-edge}
	Let $p:X \to S$ be an inner fibration of simplicial sets. Let $f:x \to y \in (X)_1$ be an edge in $X$. We say that $f$ is $p$-coCartesian if, for all $n \ge 2$ and every (outer) commutative diagram, there exists a (dotted) lifting arrow which makes the entire diagram commutative:
	\begin{equation}
	\xymatrix{
	\Delta^{\lbrace 0, 1 \rbrace} \ar@{_{(}->}[d] \ar[rd]^f \\
	\Lambda^0[n] \ar@{_{(}->}[d] \ar[r] & X \ar[d]^p \\
	\Delta[n] \ar[r] \ar@{-->}[ru] & S
   }
	\end{equation}
	\end{df}
\begin{rem}
	Let $M$ be a (ordinary) category equipped with a functor $p:M \to I$, then an arrow $f$ in $M$, which maps isomorphically to $I$, is coCartesian in the usual sense if and only if $f$  is $N(p)$-coCartesian in the sense of the above definition, where $N(p):N(M) \to \Delta[1]$ represents the nerve of $p$.
	\end{rem}
This definition leads us to the notion of a coCartesian fibration of simplicial sets:
\begin{df}
	\label{CC-fib}
	A map of simplicial sets $p:X \to S$ is called a \emph{coCartesian} fibration if it satisfies the following conditions:
	\begin{enumerate}
		\item $p$ is an inner fibration of simplicial sets.
		\item for each edge $p:x \to y$ of $S$ and each vertex $\ud{x}$ of $X$ with $p(\ud{x}) = x$, there exists a $p$-coCartesian edge $\ud{f}:\ud{x} \to \ud{y}$ with $p(\ud{f}) = f$.
		\end{enumerate}
	\end{df}
 A coCartesin fibration roughly means that it is upto weak-equivalence determined by a \emph{functor} from $S$ to a suitably defines $\infty$-category of $\infty$-categories. This idea is explored in detail in \cite[Ch. 3]{JL}.

\begin{nota}
	To each coCartesian fibration $p:X \to N(D)$ we can associate a marked simplicial set denoted $\Nt{X}$ which is composed of the pair $(X, \E)$, where $\E$ is the set of $p$-coCartesian edges of $X$
	\end{nota}
\begin{nota}
	\begin{sloppypar}
Let $(X, p), (Y, q)$ be two objects in $\sSetsMGen{D}$. We denote by  $[X, Y]_D^+$, the full (marked) simplicial subset of $[X, Y]^+$ spanned by maps in  $\sSetsMGen{D}(X, Y)$, namely spanned by maps in $[X, Y]^+$ which are compatible with the projections $p$ and $q$. We denote by  $\Flmap{X}{Y}{D}$, the full simplicial subset of $\Fl{[X, Y]}$ spanned by maps in  $\sSetsMGen{D}(X, Y)$. We denote by $\Shmap{X}{Y}{D} \subseteq \Shmap{X}{Y}{}$ the simplicial subsets spanned by maps in $\sSetsMGen{D}$.
\end{sloppypar}
\begin{df}
	\label{CC-Eq}
	A morphism $F:X \to Y$ in the category $\sSetsMGen{D}$ is called a \emph{coCartesian}-equivalence if for each coCartesian fibration $p:Z \to N(D)$, the induced simplicial map
	\[
	\Flmap{F}{\Nt{Z}}{D}:\Flmap{Y}{\Nt{Z}}{D} \to \Flmap{X}{\Nt{Z}}{D}
	\]
	is a categorical equivalence of simplicial-sets(quasi-categories).
	\end{df}
\begin{prop}
	\label{char-cc-eq}
	Let $u:X \to Y$ be a map in $\sSetsMGen{D}$, then the following are equivalent
	\begin{enumerate}
	 \item $u$ is a coCartesian equivalence.
	 \item For each functor $Z:D \to \sSetsM$, such that $Z(d)$ is a quasi-category whose marked edges are equivalences, the following (simplicial) map is a categorical equivalence:
	\[
	\Flmap{u}{ \mRNGen{Z}{D}}{D}:\Flmap{Y}{\mRNGen{Z}{D}}{D} \to \Flmap{X}{\rNGen{Z}{D}}{D}
	\]
	\item For each functor $Z:D \to \sSetsM$, such that $Z(d)$ is a quasi-category whose marked edges are equivalences, the following map is a bijection:
	\[
	\pi_0\Shmap{u}{ \mRNGen{Z}{D}}{D}:\pi_0\Shmap{Y}{\mRNGen{Z}{D}}{D} \to \pi_0\Shmap{X}{\rNGen{Z}{D}}{D}
	\]
	\end{enumerate}
\end{prop}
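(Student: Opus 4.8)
The plan is to recognize all three conditions as characterizations of a single property, namely that $u$ is a weak equivalence in the coCartesian model structure on $\sSetsMGen{D}$, and to exploit the fact that the marked relative nerve is the right Quillen functor of a Quillen equivalence with $[D,\sSetsMQ]$, \cite[Ch.~3]{JL}. I would use the following standard features of this structure: every object is cofibrant (the cofibrations being the monomorphisms); its fibrant objects are exactly the marked coCartesian fibrations $\Nt{Z}$; it is enriched over the Joyal model structure with mapping object $\Flmap{-}{-}{D}$; and for fibrant $N$ the quasi-category $\Flmap{X}{N}{D}$ has $\Shmap{X}{N}{D}$ as its maximal sub-Kan-complex, which moreover computes the homotopy function complex, so that $\pi_0\Shmap{X}{N}{D}=[X,N]$ in $\mathrm{Ho}(\sSetsMGen{D})$. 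With this dictionary, condition (1) is, by Definition \ref{CC-Eq}, precisely the statement that $u$ is a weak equivalence.

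First I would treat $(1)\Leftrightarrow(2)$. By Lemma \ref{isom-Rel-Ner}, together with the fact that the relative nerve of a quasi-category-valued functor is a coCartesian fibration, each $\mRNGen{Z}{D}$ with $Z(d)$ a quasi-category whose marked edges are its equivalences is a fibrant object $\Nt{W}$; here the hypothesis on the marked edges is what forces the relative-nerve marking to agree with the $p$-coCartesian edges, hence makes the object fibrant. So $(1)\Rightarrow(2)$ is immediate by specialization. For $(2)\Rightarrow(1)$ I would invoke rectification: the Quillen equivalence of \cite[Ch.~3]{JL} shows every coCartesian fibration $W$ is coCartesian-equivalent to some $\mRNGen{Z}{D}$. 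Since $\Flmap{X}{-}{D}$ carries weak equivalences between fibrant objects to categorical equivalences (Ken Brown's lemma in the enriched setting), one replaces $\Nt{W}$ by $\mRNGen{Z}{D}$ and concludes by two-out-of-three in the square whose vertical maps are induced by $u$ and whose horizontal maps are these categorical equivalences.

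Next I would establish $(2)\Leftrightarrow(3)$. The direction $(2)\Rightarrow(3)$ is formal: a categorical equivalence of quasi-categories restricts to a homotopy equivalence of maximal sub-Kan-complexes, hence a bijection on $\pi_0$, and $\Shmap{X}{N}{D}$ is exactly that maximal Kan complex. The hard part will be $(3)\Rightarrow(2)$. My preferred route is to observe that the family of admissible functors is closed under the pointwise cotensor $d\mapsto\ExpG{\Delta[n]}{Z(d)}$, which is again a quasi-category with its equivalences marked, and that the relative nerve, being a right adjoint, commutes with the cotensor of \eqref{co-tens-prod}, so that $\ExpG{\Delta[n]}{\mRNGen{Z}{D}}\cong\mRNGen{Z'}{D}$ for an admissible $Z'$. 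Applying (3) to all such $Z'$, together with the exponential identity $\Flmap{Y}{\ExpG{\Delta[n]}{N}}{D}\cong\ExpG{\Delta[n]}{\Flmap{Y}{N}{D}}$, yields bijections $\pi_0\,\mathrm{core}\bigl(\ExpG{\Delta[n]}{\Flmap{Y}{N}{D}}\bigr)\to\pi_0\,\mathrm{core}\bigl(\ExpG{\Delta[n]}{\Flmap{X}{N}{D}}\bigr)$ for every $n$, and by Joyal's criterion a functor of quasi-categories is a categorical equivalence as soon as it induces such bijections for all $n$. Alternatively, and more abstractly, $(3)$ for the relative-nerve family upgrades, through rectification and the invariance of $\Shmap{X}{-}{D}$ under equivalences of fibrant objects, to the assertion that $[Y,N]\to[X,N]$ is bijective for every fibrant $N$; the Yoneda lemma in $\mathrm{Ho}(\sSetsMGen{D})$ then makes $u$ a weak equivalence, and the enriched model-category axiom returns $(2)$.

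The main obstacle is $(3)\Rightarrow(2)$, and within it the passage from $\pi_0$-level bijections to a genuine categorical equivalence of the mapping quasi-categories: this needs either Joyal's $\Delta[n]$-detection criterion together with stability of the test family under cotensors, or the homotopical Yoneda argument together with rectification. The other point demanding care is the verification that each $\mRNGen{Z}{D}$ is genuinely fibrant, i.e.\ that the hypothesis that the marked edges of $Z(d)$ are its equivalences forces the relative-nerve marking to consist exactly of the $p$-coCartesian edges.
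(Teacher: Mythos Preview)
Your proposal is correct and uses the same core ingredients as the paper: fibrancy of $\mRNGen{Z}{D}$ for admissible $Z$ (via \cite[3.2.5.18]{JL}), rectification of an arbitrary coCartesian fibration to such a relative nerve, and the simplicial model structure on $\sSetsMGen{D}$ (so that $\pi_0\Shmap{-}{N}{D}$ computes $\Hom$ in the homotopy category). The difference is purely organizational. The paper runs the cycle $(1)\Rightarrow(2)\Rightarrow(3)\Rightarrow(1)$: the step $(2)\Rightarrow(3)$ invokes \cite[3.1.3.3, 3.1.4.1]{JL} to pass from a categorical equivalence on $\Flmap{-}{-}{D}$ to a homotopy equivalence on $\Shmap{-}{-}{D}$, and the step $(3)\Rightarrow(1)$ is exactly your ``alternative'' homotopical Yoneda argument. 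You instead prove the two biconditionals separately, inserting an extra $(2)\Rightarrow(1)$ via rectification and Ken Brown; this is fine but redundant once you have the cycle.

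Your first, cotensor-based route for $(3)\Rightarrow(2)$ is not in the paper. It hinges on two claims that deserve justification: that the marked relative nerve commutes with the $\Fl{(-)}$-cotensor $\expG{\Delta[n]}{-}$ (this follows if the Quillen adjunction $(\mathfrak{F}^+_\bullet(D),\mRNGen{-}{D})$ is $\sSetsQ$-enriched, which is true but should be stated), and that bijectivity of $\pi_0\,\mathrm{core}\bigl((-)^{\Delta[n]}\bigr)$ for all $n$ detects categorical equivalences of quasi-categories. The latter is correct but is not one of the more commonly cited forms of Joyal's criterion, so you would want to give a reference or a short argument. The paper sidesteps both issues by routing through $(1)$; your second ``alternative'' does the same and is the cleaner choice.
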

\begin{proof}
	$(1 \Rightarrow 2)$ Follows from the definition of coCartesian equivalence because $\mRNGen{Z}{D}$ is a coCartesian fibration under the given hypothesis.

	 Let us assume that $\Flmap{u}{ \mRNGen{Z}{D}}{D}$ is a categorical equivalence of quasi-categories for each functor $Z$ satisfying the given hypothesis.  This imples that $\Flmap{u}{ \Nt{T}}{D}$ is a categorical equivalence if and only if $\Flmap{u}{ \mRNGen{Z(T)}{D}}{D}$ is one.
		
		 $(2 \Rightarrow 3)$ We recall from \cite[Prop. 3.1.3.3]{JL} and \cite[Prop. 3.1.4.1]{JL} that, for any coCartesian fibration $\Nt{T} \in \sSetsMGen{D}$, the simplicial map $\Flmap{u}{ \Nt{T}}{D}$ is a categorical equivalence if and only if the map $\Shmap{u}{\Nt{T}}{D}$ is a homotopy equivalence of Kan complexes. This implies that $\pi_0\Shmap{u}{ \mRNGen{Z}{D}}{D}$ is a bijection.
		 
		  $(3 \Rightarrow 1)$ 
		  We recall from \cite[Cor. 3.1.4.4]{JL} that the coCartesian model category is a simplicial model category with simplicial function object given by the bifunctor $\Shmap{-}{-}{D}$.
		  This implies that $u$ is a coCartesian equivalence if and only if $\pi_0\Shmap{u}{\Nt{W}}{D}$ is a bijection for each fibrant object $W$ of the coCartesian model category. By  \cite[Prop. 3.1.4.1]{JL} we may replace $W$ by a coCartesian fibration $W \cong \Nt{T}$.
		  Further, it follows from \cite[Prop. 3.2.5.18(2)]{JL}  that for each cocartesian fibration $\Nt{T}$ there exists a functor $Z(T):D \to \sSetsM$, which satisfies the assumptions of the functor in the statement of the proposition, such that there is map
		  	\[
		  	F_T:\Nt{T} \to \mRNGen{Z(T)}{D}
		  	\]
		  	which is a coCartesian equivalence. Now it follows that $u$ is a coCartesian equivalence if and only if $\pi_0\Shmap{u}{\mRNGen{Z(T)}{D}}{D}$ is a bijection for each functor $Z$ satisfying the conditions mentioned in the statement of the proposition.
		 
\end{proof}

\end{nota}
Next we will recall a model category structure on the overcategory $\sSetsMGen{D}$ from \cite[Prop. 3.1.3.7.]{JL} in which fibrant objects are (essentially) coCartesian fibrations.
\begin{thm}
	\label{CC-Mdl-Str}
	There is a left-proper, combinatorial model category structure on the category $\sSetsMGen{D}$ in which a morphism is 
	\begin{enumerate}
		\item a cofibration if it is a monomorphism when regarded as a map of simplicial sets.
		\item a weak-equivalences if it is a coCartesian equivalence.
		\item a fibration if it has the right lifting property with respect to all maps which are simultaneously cofibrations and weak-equivalences. 
		\end{enumerate}
	\end{thm}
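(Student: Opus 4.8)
The plan is to obtain this model structure as an instance of Jeff Smith's recognition theorem for combinatorial model categories (as in \cite[A.2.6.13]{JL}), rather than re-deriving all of the lifting properties by hand. First I would record that $\sSetsMGen{D}$ is locally presentable: the category $\sSetsM$ of marked simplicial sets is a category of set-valued presheaves (on the category of marked simplices), hence locally presentable, and the slice $\sSetsM/N(D)$ over a fixed object inherits local presentability. This situates us in exactly the setting where Smith's theorem applies and where the ``combinatorial'' conclusion is automatic.

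Next I would fix the generating data. For the cofibrations I take the small set $I$ consisting of the standard generating monomorphisms of $\sSetsM$ --- the boundary inclusions $\partial\Delta[n]^\flat \hookrightarrow \Delta[n]^\flat$ together with the marking inclusion $\Delta[1]^\flat \hookrightarrow \Delta[1]^\sharp$ --- each equipped with every possible structure map down to $N(D)$, so that $I$ generates exactly the monomorphisms in the slice as a weakly saturated class. For the weak equivalences I take $W$ to be the class of coCartesian equivalences of definition \ref{CC-Eq}. The two-out-of-three property and closure under retracts for $W$ are immediate from definition \ref{CC-Eq}, since they are inherited from the corresponding properties of categorical equivalences of quasi-categories applied to the mapping objects $\Flmap{-}{\Nt{Z}}{D}$.

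The technical core is to show that $W$ is an accessible subcategory of the arrow category $\sSetsMGen{D}^{\Delta[1]}$ and that the pair $(I, W)$ satisfies Smith's two compatibility axioms. For accessibility I would use the characterization in proposition \ref{char-cc-eq}(3): a map $u$ lies in $W$ precisely when $\pi_0\Shmap{u}{\mRNGen{Z}{D}}{D}$ is a bijection for every functor $Z:D\to\sSetsM$ valued in quasi-categories whose marked edges are equivalences. Since membership in $W$ is detected against the marked relative nerves $\mRNGen{Z}{D}$, and each such test object is determined, up to the relevant equivalence, by data of bounded size, a solution-set argument cuts the proper class of test functors $Z$ down to a set, exhibiting $W$ as accessible and stable under filtered colimits. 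I would then verify Smith's axioms: (a) every map with the right lifting property against $I$ --- i.e.\ every trivial fibration of underlying marked simplicial sets compatible with the projection to $N(D)$ --- is a coCartesian equivalence, which again follows from proposition \ref{char-cc-eq} since such a map induces trivial fibrations, hence categorical equivalences, on all the relevant $\Flmap{-}{\Nt{Z}}{D}$; and (b) the class $W\cap\mathrm{cof}$ is closed under pushout and transfinite composition, which I would extract from the simplicial-model-category structure recalled in the proof of proposition \ref{char-cc-eq} via the bifunctor $\Shmap{-}{-}{D}$.

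Finally, left-properness follows once the model structure is in hand: every object of $\sSetsMGen{D}$ is cofibrant, because cofibrations are precisely the monomorphisms, and for such categories the gluing lemma yields that a pushout of a coCartesian equivalence along a cofibration is again a coCartesian equivalence. The step I expect to be the main obstacle is the accessibility of $W$, namely trimming the proper class of test functors $Z$ to a set while preserving the detection property of proposition \ref{char-cc-eq}(3); everything else is either formal (two-out-of-three, retracts, left-properness) or a direct transcription of the lifting and equivalence properties already packaged in proposition \ref{char-cc-eq}.
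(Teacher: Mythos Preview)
The paper does not prove this theorem at all: it is stated as a recall of \cite[Prop.~3.1.3.7]{JL}, so there is no in-paper argument to compare against. Your Smith-theorem strategy is in fact precisely how Lurie establishes the result in \cite{JL}, so at the level of overall architecture you are on the right track and not doing anything exotic.

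That said, your sketch has a genuine circularity. You repeatedly invoke proposition~\ref{char-cc-eq} --- for accessibility of $W$, for axiom (a), and most explicitly for axiom (b), where you write that closure of $W\cap\mathrm{cof}$ under pushout and transfinite composition ``would be extracted from the simplicial-model-category structure recalled in the proof of proposition~\ref{char-cc-eq}.'' But the proof of proposition~\ref{char-cc-eq} in this paper (specifically the implication $(3)\Rightarrow(1)$) already cites \cite[Cor.~3.1.4.4]{JL}, which asserts that the coCartesian model category is simplicial; that corollary is logically downstream of the very model structure you are trying to construct. So as written you are assuming the theorem to prove it.

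To repair this you would need to establish the relevant inputs directly, without passing through the model structure: show by hand that the bifunctor $\Shmap{-}{-}{D}$ sends (cofibration, marked-anodyne) pairs to trivial Kan fibrations (this is the content of \cite[Cor.~3.1.4.3]{JL}, which Lurie proves \emph{before} 3.1.3.7), and use that to get axiom (b) and the detection of $W$ against a set of test objects. Your identification of accessibility of $W$ as the main obstacle is accurate, but be aware that the solution-set reduction you gesture at is nontrivial and is exactly where Lurie's argument does real work.
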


We have defined a function object for the category $\sSetsMGen{D}$ above.
The simplicial set $\Flmap{X}{Y}{D}$ has verices, all maps from $X$ to $Y$ in $\sSetsMGen{D}$. An $n$-simplex in $\Flmap{X}{Y}{D}$ is a map $\Fl{\Delta[n]} \times X \to Y$ in $\sSetsMGen{D}$, where $\Fl{\Delta[n]} \times (X, p) = (\Fl{\Delta[n]} \times X, pp_2)$, where $p_2$ is the projection $\Fl{\Delta[n]} \times X \to X$. The enriched category $\sSetsMGen{D}$ admits tensor and cotensor products. The \emph{tensor product} of an object $X = (X, p)$ in $\sSetsMGen{D}$ with a simplicial set $A$ is the objects
\[
\Fl{A} \times X = (\Fl{A} \times X, pp_2).
\]
The \emph{cotensor product} of $X$ by $A$ is an object of $\sSetsMGen{D}$ denoted $\expG{A}{X}$. If $q:\expG{A}{X} \to \Sh{N(D)}$ is the structure map, then a simplex $x:\Fl{\Delta[n]} \to \expG{A}{X}$ over a simplex $y = qx:\Delta[n] \to \Sh{N(D)}$ is a map $\Fl{A} \times (\Fl{\Delta[n]}, y) \to (X, p)$. The object $(\expG{A}{X}, q)$ can be constructed by the following pullback square in $\sSetsM$:
\begin{equation*}
\xymatrix{
\expG{A}{X} \ar[r] \ar[d]_q & \mapMS{\Fl{A}}{X} \ar[d]^{\mapMS{\Fl{A}}{p}} \\
\Sh{N(D)} \ar[r] & \mapMS{\Fl{A}}{\Sh{N(D)}}
}
\end{equation*}
where the bottom map is the diagonal. There are canonical isomorphisms:
\begin{equation}
\Flmap{\Fl{A} \times X}{Y}{D} \cong \left[A, \Flmap{X}{Y}{D} \right] \cong \Flmap{X}{\expG{A}{Y}}{D}
\end{equation}
\begin{rem}
	\label{Simp-Mdl-Cat}
	The coCartesian model category structure on $\sSetsMGen{D}$ is a simplicial model category structure with the simplicial Hom functor:
	\[
	\Shmap{-}{-}{D}:\sSetsMGen{D}^{op} \times \sSetsMGen{D} \to \sSets.
	\]
	This is proved in \cite[Corollary 3.1.4.4.]{JL}. The coCartesian model category structure is a $\sSetsQ$-model category structure with the function object given by:
	\[
	\Flmap{-}{-}{D}:\sSetsMGen{D}^{op} \times \sSetsMGen{D} \to \sSets.
	\]
	This is remark \cite[3.1.4.5.]{JL}.
	\end{rem}
\begin{rem}
	\label{enrich-mar-sSets}
	The coCartesian model category is a $\sSetsMQ$-model category with the Hom functor:
	\[
	\mapGenM{-}{-}{D}:\sSetsMGen{D}^{op} \times \sSetsMGen{D} \to \sSetsM.
	\]
	This follows from \cite[Corollary 3.1.4.3]{JL} by taking $S = N(D)$ and $T = \Delta[0]$, where $S$ and $T$ are specified in the statement of the corallary.
	\end{rem}
 \begin{df}
 	\label{mar-rel-Ner}
 	Let $F:D \to \sSetsM$ be a functor. We can compose it with the forgetful functor $U$ to obtain a composite functor $F:D \overset{F} \to \sSetsM \overset{U} \to \sSets$. The \emph{marked} Grothendieck construction of $F$, denoted $\mRNGen{F}{D}$, is the marked simplicial set $\left(\mRNGen{F}{D}, \E \right)$,
 	where the set $\E$ consists of those edges $\ud{e} = (e, h)$ of $\mRNGen{F}{D}$, see remark \ref{Rel-Ner-edge}, which determines a marked edge of the marked simplicial set $F(d)$, where $e:c \to d$ is an arrow in $D$.
 	\end{df}
 The above construction of the marked Grothendieck construction determines a functor 
 \begin{equation}
 \label{mark-Gr-const}
 \mRNGen{-}{D}:[D, \sSetsM] \to \sSetsMGen{D}. 
 \end{equation}
 The functor $ \mRNGen{-}{D}$ has a left adjoint which we denote by $\mRNGenL{\bullet}{D}$, see \cite[]{JL}. This functor is defined on objects as follows:
 \begin{equation*}
 \mRNGenL{X}{D}(d) = X \underset{\Sh{N(D)}} \times \Sh{N(D/d)},
 \end{equation*}
 where $(X, p)$ is an object in $\sSetsMGen{D}$.
 \begin{nota}
 	We will sometimes denote $\mRNGenL{X}{D}$ by $\mRNGenL{\bullet}{D}(X)$.
 	\end{nota}
 \begin{rem}
 	\label{left-adj-pres-fib-obj}
 	If $(X, p)$ is a fibrant object in $\sSetsMGen{D}$ then $\mRNGenL{X}{D}$ is a fibrant object in the projective model category $[D, \sSetsMQ]$.
 	\end{rem}

 Next we will define a marked version of the functor $\rNGenR{D}$, denoted $\mRNGenR{D}$:
 \begin{equation*}
 \mRNGenR{D}(X)(d) := [\Sh{N(\ovCatGen{d}{D})}, X]^+_D
 \end{equation*}
 where $X$ is an object of $\sSetsMGen{D}$. This functor has a left adjoint which we denote by $\gCLM{D}$.
 
 \begin{df}
 	\label{unit-comp-Gr}
 	Let $X:D \to \sSetsM$ be a functor.
 	For each $d \in D$ we define a map of marked simplicial sets 
 	\[\eta^+_X(d):X(d) \to [N(\ovCatGen{d}{D}), \mRNGen{X}{D}]^+_D.
 	\]
 	Let $x \in X(d)_n$ be an $n$-simplex in $X(d)$. This $n$-simplex defines a canonical map $\eta^+_X(d)(x):N(\ovCatGen{d}{D}) \times \Delta[n] \to \mRNGen{X}{D}$ in $\sSetsMGen{D}$ whose value on $(\cn{n}, id_n) \in (N(\ovCatGen{d}{D}) \times \Delta[n])_n$ is the image of $x$ in $\mRNGen{X}{D}$, namely the $n$-simplex $(\ud{x}, x)$, where $\ud{x} = (\ud{x_{n-1}}, d_n(x))$. We recall that a $k$-simplex in $\Delta[n]$ is a map $\alpha:[k] \to [n]$ in the category $\Delta$ and therefore it can be written as $\Delta[n](\alpha)(id_n)$.
 	For a $k$-simplex $((g, f_1, \dots, f_{k+1}), \alpha)$ in $N(\ovCatGen{d}{D}) \times \Delta[n]$, we define
 	\[
 	\eta^+_X(d)(x)((g, f_1, f_2, \dots, f_{k+1}), \alpha) := X(f_{k+1} \circ f_k \circ \cdots \circ g)(X(d)(\alpha)(x)).
 	\]
 	This defines the (simplicial) map $\eta^+_X(d)(x)$. These simplicial maps glue together into a natural transformation $\eta^+_X$.
 	\end{df}
%
 Now we define a map $\iota_d^+$ in $\sSetsMGen{D}$:
 \begin{equation}
 \label{main-gen-local-mar}
 \xymatrix{
 	\Fl{\Delta[0]} \ar[rr]^{id_d} \ar[rd]_d  && \Sh{N(\ovCatGen{d}{D})} \ar[ld] \\
 	&\Sh{N(D)}
 }
 \end{equation}
 \begin{lem}
 	\label{main-lemma-mar}
 	For each $d \in D$ the morphism $\iota^+_d$ defined in \eqref{main-gen-local-mar} is a coCartesian equivalence.
 \end{lem}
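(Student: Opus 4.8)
The plan is to verify the coCartesian equivalence through the characterization of Proposition \ref{char-cc-eq}. By the equivalence of conditions $(1)$ and $(2)$ there, it suffices to fix an arbitrary functor $Z:D \to \sSetsM$ whose values $Z(c)$ are quasi-categories with marked edges the equivalences, and to show that the restriction map
\[
\Flmap{\iota^+_d}{\mRNGen{Z}{D}}{D}:\Flmap{\Sh{N(\ovCatGen{d}{D})}}{\mRNGen{Z}{D}}{D} \to \Flmap{\Fl{\Delta[0]}}{\mRNGen{Z}{D}}{D}
\]
induced by $\iota^+_d$ is a categorical equivalence of quasi-categories. The argument then reduces to computing the two mapping objects and identifying this map.

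First I would identify the target. An $n$-simplex of $\Flmap{\Fl{\Delta[0]}}{\mRNGen{Z}{D}}{D}$ is a map $\Fl{\Delta[n]} \times \Fl{\Delta[0]} = \Fl{\Delta[n]} \to \mRNGen{Z}{D}$ over $N(D)$ lying over the vertex $d$; since $\Fl{\Delta[n]}$ carries no nondegenerate marked edges, the marking imposes no condition, so this simplicial set is exactly the fibre of $\mRNGen{Z}{D}$ over $d$. By Lemma \ref{isom-Rel-Ner} and Proposition \ref{Rel-Ner-isom-func-val} this fibre is (the underlying quasi-category of) $Z(d)$. Next I would describe the source: an $n$-simplex of $\Flmap{\Sh{N(\ovCatGen{d}{D})}}{\mRNGen{Z}{D}}{D}$ is a map $\Fl{\Delta[n]} \times \Sh{N(\ovCatGen{d}{D})} \to \mRNGen{Z}{D}$ over $N(D)$, and because the factor $N(\ovCatGen{d}{D})$ is sharply marked such a map must send every edge of $N(\ovCatGen{d}{D})$ to a coCartesian edge of the fibration $\mRNGen{Z}{D}$. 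Under these identifications, $\Flmap{\iota^+_d}{\mRNGen{Z}{D}}{D}$ becomes evaluation of such a coCartesian functor at the object $id_d \in \ovCatGen{d}{D}$, which is the initial object of the coslice $\ovCatGen{d}{D}$.

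To produce a homotopy inverse I would use the natural transformation $\eta^+$ of Definition \ref{unit-comp-Gr}. The map $\eta^+_Z(d):Z(d) \to \mapGenM{\Sh{N(\ovCatGen{d}{D})}}{\mRNGen{Z}{D}}{D}$ sends an $n$-simplex $x$ to the coCartesian pushforward of $x$ along all morphisms out of $d$; its underlying simplicial map lands in $\Flmap{\Sh{N(\ovCatGen{d}{D})}}{\mRNGen{Z}{D}}{D}$. Restricting $\eta^+_Z(d)(x)$ to the initial object $id_d$ returns $x$ on the nose, since $Z(id_d) = id$. Thus $\Flmap{\iota^+_d}{\mRNGen{Z}{D}}{D} \circ \eta^+_Z(d) = id_{Z(d)}$, so $\eta^+_Z(d)$ is a section of the restriction map. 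It remains to show that the opposite composite is homotopic to the identity of $\Flmap{\Sh{N(\ovCatGen{d}{D})}}{\mRNGen{Z}{D}}{D}$; equivalently, that every coCartesian functor $N(\ovCatGen{d}{D}) \to \mRNGen{Z}{D}$ over $N(D)$ is canonically equivalent to the coCartesian pushforward of its value at $id_d$.

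This last step is the crux, and I expect it to be the main obstacle. Its content is the uniqueness, up to a contractible space of choices, of coCartesian lifts: since $id_d$ is initial in $\ovCatGen{d}{D}$, for every object $f:d \to c$ the unique morphism $id_d \to f$ projects to $f$ in $D$, so a coCartesian functor must carry $f$ to the pushforward $f_{!}$ of its value at $id_d$, and the higher simplices are fixed by the same reasoning together with the coherence of coCartesian transport. I would discharge it by appealing to the theory of coCartesian fibrations in \cite[Ch. 3]{JL} — concretely, to the fact that restriction along the inclusion of an initial object exhibits the quasi-category of coCartesian functors out of $\Sh{N(\ovCatGen{d}{D})}$ as equivalent to the fibre over $d$. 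Combined with the explicit section $\eta^+_Z(d)$, this yields that $\Flmap{\iota^+_d}{\mRNGen{Z}{D}}{D}$ is a categorical equivalence, and hence, by Proposition \ref{char-cc-eq}, that $\iota^+_d$ is a coCartesian equivalence. I note that the naive contraction of $N(\ovCatGen{d}{D})$ onto $id_d$ coming from initiality does \emph{not} suffice, since the associated homotopy fails to lie over $N(D)$; it is precisely the marking, forcing transport to be coCartesian, that repairs this and makes the evaluation map an equivalence.
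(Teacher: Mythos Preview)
Your argument is sound and shares its opening moves with the paper: both invoke Proposition~\ref{char-cc-eq} and both produce an explicit section of the restriction-to-$id_d$ map. The genuine difference is in how the hard direction is handled. The paper works with criterion~$(3)$ of Proposition~\ref{char-cc-eq} (a $\pi_0$ bijection on the sharp mapping spaces) and exploits the identification $N(\ovCatGen{d}{D}) \cong \rNGen{D(d,-)}{D}$: the coslice is itself the Grothendieck construction of the representable functor $D(d,-)$. Ordinary Yoneda then manufactures the section $z \mapsto F_z := \rNGen{T_z}{D}$, and enriched Yoneda lifts edges $f \mapsto T_f$, so both surjectivity and injectivity on $\pi_0$ are obtained from functoriality of $\rNGen{-}{D}$ alone, with no appeal to the general theory of coCartesian sections.

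Your route via criterion~$(2)$ and the section $\eta^+_Z(d)$ is equally valid---indeed $\eta^+_Z(d)$ is precisely the Yoneda section in different packaging---but you outsource the crux (that a coCartesian functor out of $\Sh{N(\ovCatGen{d}{D})}$ is determined up to equivalence by its value at the initial object) to \cite[Ch.~3]{JL}. That is a legitimate citation, and your closing remark that the naive contraction of the coslice fails to lie over $N(D)$ is exactly the right diagnosis of why the marking is essential. What the paper's approach buys is self-containment: by recognising the coslice as a Grothendieck construction of a representable, the argument never leaves the Yoneda lemma. One small matter of ordering: you invoke $\eta^+_Z(d)$ from Definition~\ref{unit-comp-Gr}, which in the paper appears only \emph{after} this lemma; there is no circularity, but constructing the section directly via Yoneda (as the paper does) avoids the forward reference.
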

 \begin{proof}
 	We will show that for each functor $Z:D \to \sSetsM$ such that, for each $d \in D$, $Z(d)$ is a quasi-category whose marked edges are equivalences, we have the following bijection:
 	\[
 	\pi_0\Shmap{\iota^+_d}{\rNGen{Z}{D}}{D}:\pi_0\Shmap{N(\ovCatGen{d}{D})}{\rNGen{Z}{D}}{D}  \to \pi_0\Shmap{\Delta[0]} {\rNGen{Z}{D}}{D} \cong \pi_0(J(Z(d))),
 	\]
 	where $J(Z(d))$ is the largest Kan complex contained in $Z(d)$.
 	Let $z \in J(Z(d))_0$ be a vertex of $\Sh{J(Z(d))}$. We will construct a morphism $F_z:N(\ovCatGen{d}{D}) \to \rNGen{Z}{D}$ in the category $\sSetsMGen{D}$.  The vertex $z$ represents a natural transformation
 	\[
 	T_z:D(d, -) \Rightarrow Z
 	\]
 	such that $T_z(id_d) = z$. Since $N(\ovCatGen{d}{D}) \cong \rNGen{D(d, -)}{D}$ therefore we have a map
 	\[
 	F_z:N(\ovCatGen{d}{D}) \cong \rNGen{D(d, -)}{D} \overset{\rNGen{T_z}{D}} \to \rNGen{Z}{D}
 	\]
 	in $\sSetsMGen{D}$ such that $F_z(id_d) = z$. Thus we have shown that the map $\pi_0\Shmap{\iota^+_d} {\rNGen{Z}{D}}{D}$ is a surjection.
 	
 	Let $f:y \to z$ be an edge of $J(Z(d))$, then by the (enriched) Yoneda's lemma followed by an application of the Grothendieck construction functor, this edge uniquely determines a map
 	\[
 	T_f:N(\ovCatGen{d}{D}) \times \Delta[1] \to \rNGen{Z}{D}
 	\]
 	in $\sSetsMGen{D}$ such that $F_z((id_d, id_1)) = f$. Thus we have shown that the map $\pi_0\Shmap{\iota^+_d} {\rNGen{Z}{D}}{D}$ is also an injection.
 	
 \end{proof}

\begin{lem}
	\label{unit-Eq-mar}
	For any projectively fibrant functor $X:D \to \sSetsM$, the  map $\eta^+_X$ defined in \ref{unit-comp-Gr} is an objectwise categorical equivalence of marked simplicial sets.
\end{lem}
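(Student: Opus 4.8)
The plan is to prove the lemma one object at a time, factoring $\eta^+_X(d)$ through the fiber of $\mRNGen{X}{D}$ over $d$ and concluding by a two-out-of-three argument. Fix $d \in D$. Since $X$ is projectively fibrant, each value $X(e)$ is a quasi-category whose marked edges are precisely its equivalences, and consequently the marked relative nerve $\mRNGen{X}{D}$ is a fibrant object of the coCartesian model category $\sSetsMGen{D}$ of Theorem \ref{CC-Mdl-Str}. Throughout I will use the $\sSetsMQ$-enrichment of this model category recorded in Remark \ref{enrich-mar-sSets}, whose internal Hom is $\mapGenM{-}{-}{D}$, so that the target of $\eta^+_X(d)$ is exactly $\mapGenM{\Sh{N(\ovCatGen{d}{D})}}{\mRNGen{X}{D}}{D}$.

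The key input is the map $\iota^+_d:\Fl{\Delta[0]} \to \Sh{N(\ovCatGen{d}{D})}$ of \eqref{main-gen-local-mar}, which selects the object $id_d$. On underlying simplicial sets this is the monomorphism $\Delta[0] \hookrightarrow N(\ovCatGen{d}{D})$, hence a cofibration by Theorem \ref{CC-Mdl-Str}; and by Lemma \ref{main-lemma-mar} it is a coCartesian equivalence. Thus $\iota^+_d$ is a trivial cofibration in $\sSetsMGen{D}$. By the $SM7$ axiom of the $\sSetsMQ$-model structure, applying the contravariant internal Hom $\mapGenM{-}{\mRNGen{X}{D}}{D}$ to the trivial cofibration $\iota^+_d$, against the fibrant object $\mRNGen{X}{D}$, yields a trivial fibration in $\sSetsMQ$. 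Hence the restriction
\[
(\iota^+_d)^\ast:\mapGenM{\Sh{N(\ovCatGen{d}{D})}}{\mRNGen{X}{D}}{D} \to \mapGenM{\Fl{\Delta[0]}}{\mRNGen{X}{D}}{D}
\]
is a categorical equivalence of marked simplicial sets.

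It remains to identify the target of $(\iota^+_d)^\ast$ and the composite with $\eta^+_X(d)$. By Proposition \ref{Rel-Ner-isom-func-val} the fiber of $\mRNGen{X}{D}$ over $d$ is $X(d)$, and unwinding the coCartesian marking of Definition \ref{mar-rel-Ner} the induced marking on this fiber is precisely that of $X(d)$; this furnishes a natural isomorphism $\mapGenM{\Fl{\Delta[0]}}{\mRNGen{X}{D}}{D} \cong X(d)$ of marked simplicial sets. I then claim that the composite $(\iota^+_d)^\ast \circ \eta^+_X(d)$ is exactly this isomorphism. This is a direct unwinding of Definition \ref{unit-comp-Gr}: restricting $\eta^+_X(d)(x)$ along $id_d$ evaluates the defining formula at simplices of the form $(id_d, \alpha)$, where it returns $X(id_d)(X(d)(\alpha)(x)) = X(d)(\alpha)(x)$, so under the fiber identification the composite sends $x$ to itself.

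With these facts in hand the lemma follows by two-out-of-three: the composite $(\iota^+_d)^\ast \circ \eta^+_X(d)$ is an isomorphism and $(\iota^+_d)^\ast$ is a categorical equivalence, whence $\eta^+_X(d)$ is a categorical equivalence of marked simplicial sets, for every $d \in D$. The step I expect to be the main obstacle is the bookkeeping required to verify that the composite is genuinely the fiber isomorphism — in particular the compatibility of markings in the identification $\mapGenM{\Fl{\Delta[0]}}{\mRNGen{X}{D}}{D} \cong X(d)$ — together with the (citable, but marking-sensitive) fact that the marked relative nerve of a projectively fibrant functor is fibrant in $\sSetsMGen{D}$, on which the enriched trivial-fibration argument rests.
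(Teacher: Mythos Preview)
Your proof is correct and follows essentially the same route as the paper: both use fibrancy of $\mRNGen{X}{D}$, the coCartesian equivalence $\iota^+_d$ from Lemma \ref{main-lemma-mar}, the enrichment of Remark \ref{enrich-mar-sSets} to conclude that restriction along $\iota^+_d$ is an equivalence onto the fiber $X(d)$, and then two-out-of-three after identifying the composite with the identity. Your version is in fact slightly more explicit, invoking SM7 to see that $(\iota^+_d)^\ast$ is a trivial fibration rather than merely an equivalence, and spelling out the marking check on the fiber, but the argument is the same.
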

\begin{proof}
	Under the hypothesis of the lemma, it follows from \cite[Prop. 3.2.5.18(2)]{JL} and lemma \ref{isom-Rel-Ner} that $\mRNGen{X}{D}$ is a fibrant object in the coCartesian model category. Now lemma \ref{main-lemma-mar} and remark \ref{enrich-mar-sSets} gives us, for each $d \in D$, the following homotopy equivalence in $\sSetsMQ$:
	\[
	[\iota^+_d, \rNGen{X}{D}]^+_D:[\Sh{N(\ovCatGen{d}{D})}, \mRNGen{X}{D}]^+_D \to [\Fl{\Delta[0]}, \mRNGen{X}{D}]^+_D \cong X(d)
	\]
	such that $c \circ [\iota_d, \rNGen{X}{D}]_D \circ \eta_X(d) = id_{X(d)}$, where $c$ is the canonical isomorphism between the fiber of $p:\rNGen{X}{D} \to N(D)$ over $d \in D$ and $X(d)$ \emph{i.e.} the value of the functor $X$ on $d$.
	Now the $2$ out of $3$ property of weak equivalences in a model category tells us that $\eta_X(d)$ is a homotopy equivalence for each $d \in D$ therefore $\eta^+_X$ is an objectwise homotopy equivalence in $[D,\sSetsMQ]$.
\end{proof}
 
 An immediate consequence of the definition of the right adjoint functor $\mRNGenR{D}$ is the following lemma:
 \begin{lem}
 	\label{Qu-Eq-L-R-mar}
 	The adjunction $(\gCLM{D}, \mRNGenR{D})$ is a Quillen adjunction between the projective model category structure on $[D, \sSetsMQ]$ and coCartesian model category $\sSetsMGen{D}$.
 \end{lem}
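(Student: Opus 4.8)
The plan is to establish the adjunction by verifying the condition on the right adjoint: since $\mRNGenR{D}$ is the functor given by an explicit cotensor-type formula, whereas $\gCLM{D}$ is only characterized abstractly as its left adjoint, I would show directly that $\mRNGenR{D}$ preserves fibrations and trivial fibrations. As this is one of the standard equivalent characterizations of a Quillen adjunction, it is enough.

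First I would recall that a morphism in the projective model category $[D, \sSetsMQ]$ is a fibration (respectively a trivial fibration) precisely when it is an objectwise fibration (respectively objectwise trivial fibration) of marked simplicial sets. Hence, given a (trivial) fibration $p:X \to Y$ in $\sSetsMGen{D}$, it suffices to check that for each $d \in D$ the map
\[
\mRNGenR{D}(p)(d):\mapGenM{\Sh{N(\ovCatGen{d}{D})}}{X}{D} \to \mapGenM{\Sh{N(\ovCatGen{d}{D})}}{Y}{D}
\]
is a (trivial) fibration in $\sSetsMQ$.

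The key input is the $\sSetsMQ$-enrichment of the coCartesian model category recorded in Remark \ref{enrich-mar-sSets}, whose internal Hom functor is exactly $\mapGenM{-}{-}{D}$. Being a $\sSetsMQ$-model category, $\sSetsMGen{D}$ satisfies the enriched pushout-product axiom (SM7) in the form concerning the Hom: for any cofibration $j:A \to B$ and any fibration $p:X \to Y$ in $\sSetsMGen{D}$, the induced map
\[
\mapGenM{B}{X}{D} \to \mapGenM{A}{X}{D} \times_{\mapGenM{A}{Y}{D}} \mapGenM{B}{Y}{D}
\]
is a fibration in $\sSetsMQ$, and is trivial whenever $p$ is. I would apply this with $A$ the initial object and $B = \Sh{N(\ovCatGen{d}{D})}$; note that $B$ is cofibrant because, by Theorem \ref{CC-Mdl-Str}, the cofibrations of $\sSetsMGen{D}$ are exactly the monomorphisms. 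Since $\mapGenM{A}{-}{D}$ is then terminal, the codomain pullback collapses onto $\mapGenM{B}{Y}{D}$ and the displayed map becomes precisely $\mRNGenR{D}(p)(d)$. This exhibits it as a (trivial) fibration in $\sSetsMQ$, so $\mRNGenR{D}(p)$ is an objectwise (trivial) fibration, as required.

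The work that remains is bookkeeping rather than a genuine obstacle: one must confirm that $\Sh{N(\ovCatGen{d}{D})}$ is indeed an object of $\sSetsMGen{D}$ via the projection $N(\ovCatGen{d}{D}) \to N(D)$, so that the enriched Hom is defined, and that the tensor--cotensor adjunction underlying Remark \ref{enrich-mar-sSets} identifies $\mRNGenR{D}(X)(d)$ with $\mapGenM{\Sh{N(\ovCatGen{d}{D})}}{X}{D}$. Granting these identifications, the whole argument is formal, and it follows that $(\gCLM{D}, \mRNGenR{D})$ is a Quillen adjunction.
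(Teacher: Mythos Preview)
Your proposal is correct and follows essentially the same approach as the paper's own proof: both deduce that $\mRNGenR{D}$ preserves (acyclic) fibrations from the $\sSetsMQ$-enrichment of the coCartesian model category recorded in Remark~\ref{enrich-mar-sSets}. The paper's argument is stated in two sentences, while you spell out the SM7 step and the identification $\mRNGenR{D}(X)(d) = \mapGenM{\Sh{N(\ovCatGen{d}{D})}}{X}{D}$ explicitly, but the substance is the same.
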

 \begin{proof}
 	The coCartesian model category is a $\sSetsMQ$-model category, see remark \ref{enrich-mar-sSets}. This implies that $\mRNGenR{D}$ maps (acyclic) fibrations in the coCartesian model category to (acyclic) projective fibrations in $[D, \sSetsM]$ which are objectwise (acyclic) fibrations of marked simplicial sets.
 \end{proof}
 
 Now we get to the main result of this paper.
The natural isomorphism constructed in the proof of the above theorem implies the following proposition:
\begin{thm}
	\label{nat-isom-left-right-der-func}
	The (total) left derived functor of the left Quillen functor $\pFGen{\bullet}{D}$ is naturally isomorphic to the (total) right derived functor of $\mRNGenR{D}$.
	\end{thm}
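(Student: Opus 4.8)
The plan is to deduce the statement formally in the homotopy category, the genuine work having already been packaged in Lemma~\ref{unit-Eq-mar} and in the straightening equivalence of \cite{JL}. Write $\mathcal{H}_1 = \mathrm{Ho}([D, \sSetsMQ])$ and $\mathcal{H}_2 = \mathrm{Ho}(\sSetsMGen{D})$. Both $\mRNGenL{\bullet}{D}$ and $\mRNGenR{D}$ are functors $\sSetsMGen{D} \to [D, \sSetsMQ]$: the first is left Quillen, being the left adjoint of the relative nerve $\mRNGen{-}{D}$, which by \cite{JL} is the right member of a Quillen equivalence; the second is right Quillen by Lemma~\ref{Qu-Eq-L-R-mar}. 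Hence the two total derived functors $\mathbf{L}\mRNGenL{\bullet}{D}$ and $\mathbf{R}\mRNGenR{D}$ have the same source and target, namely $\mathcal{H}_2 \to \mathcal{H}_1$, so asking for a natural isomorphism between them is meaningful. Moreover $\mathbf{R}\mRNGen{-}{D}\colon \mathcal{H}_1 \to \mathcal{H}_2$ is an equivalence whose quasi-inverse is $\mathbf{L}\mRNGenL{\bullet}{D}$.

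The crucial input is the natural transformation $\eta^+\colon \mathrm{id} \Rightarrow \mRNGenR{D}\circ \mRNGen{-}{D}$ of Definition~\ref{unit-comp-Gr}, whose component at $X$ is the comparison $\eta^+_X\colon X \to \mRNGenR{D}(\mRNGen{X}{D})$. First I would restrict to projectively fibrant $X \in [D, \sSetsMQ]$, which suffice to compute every object of $\mathcal{H}_1$. For such $X$, the proof of Lemma~\ref{unit-Eq-mar} shows (via \cite[Prop.~3.2.5.18(2)]{JL} and Lemma~\ref{isom-Rel-Ner}) that $\mRNGen{X}{D}$ is already fibrant in the coCartesian model category, so no further replacement intervenes and one has, on the nose,
\[
\mathbf{R}\mRNGenR{D}\big(\mathbf{R}\mRNGen{-}{D}(X)\big) = \mRNGenR{D}(\mRNGen{X}{D}).
\]
Lemma~\ref{unit-Eq-mar} asserts exactly that $\eta^+_X$ is an objectwise categorical equivalence, i.e.\ an isomorphism in $\mathcal{H}_1$. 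By naturality of $\eta^+$ these assemble into a natural isomorphism $\mathrm{id}_{\mathcal{H}_1} \cong \mathbf{R}\mRNGenR{D}\circ \mathbf{R}\mRNGen{-}{D}$.

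Finally I would conclude by a uniqueness-of-inverses argument. Since $\mathbf{R}\mRNGen{-}{D}$ is an equivalence with quasi-inverse $\mathbf{L}\mRNGenL{\bullet}{D}$, the isomorphism above exhibits $\mathbf{R}\mRNGenR{D}$ as a one-sided, hence two-sided, inverse of $\mathbf{R}\mRNGen{-}{D}$; composing it on the right with $\mathbf{L}\mRNGenL{\bullet}{D}$ and using $\mathbf{R}\mRNGen{-}{D}\circ\mathbf{L}\mRNGenL{\bullet}{D} \cong \mathrm{id}_{\mathcal{H}_2}$ gives
\[
\mathbf{L}\mRNGenL{\bullet}{D} \;\cong\; \mathbf{R}\mRNGenR{D}\circ \mathbf{R}\mRNGen{-}{D}\circ \mathbf{L}\mRNGenL{\bullet}{D} \;\cong\; \mathbf{R}\mRNGenR{D},
\]
which is the asserted natural isomorphism. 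Equivalently, one may phrase this as uniqueness of adjoints: $\mathbf{R}\mRNGenR{D}$ is right adjoint to $\mathbf{L}\gCLM{D}\cong \mathbf{R}\mRNGen{-}{D}$, whose ambidextrous quasi-inverse $\mathbf{L}\mRNGenL{\bullet}{D}$ is in particular a right adjoint, so the two right adjoints coincide.

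The only real obstacle is the bookkeeping ensuring that the derived composite $\mathbf{R}\mRNGenR{D}\circ\mathbf{R}\mRNGen{-}{D}$ is computed with no unwanted fibrant replacement inserted between the two functors, i.e.\ that $\mRNGen{X}{D}$ is coCartesian-fibrant for projectively fibrant $X$; this is precisely the content borrowed from \cite[Prop.~3.2.5.18(2)]{JL} and Lemma~\ref{unit-Eq-mar}. Once that point-set fibrancy is secured, together with the fact from \cite{JL} that $\mathbf{R}\mRNGen{-}{D}$ is an equivalence inverse to $\mathbf{L}\mRNGenL{\bullet}{D}$, everything else is formal two-out-of-three manipulation in $\mathcal{H}_1$ and $\mathcal{H}_2$.
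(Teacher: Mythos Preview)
Your argument is correct and rests on the same three ingredients the paper uses: Lemma~\ref{unit-Eq-mar} (the comparison $\eta^+_X$ is an equivalence for projectively fibrant $X$), the fibrancy of $\mRNGen{X}{D}$ for such $X$ via \cite[Prop.~3.2.5.18(2)]{JL}, and the Quillen equivalence $(\pFGen{\bullet}{D},\mRNGen{-}{D})$ from \cite{JL}. The organization, however, is genuinely different. The paper stays at the point-set level and builds an explicit zigzag of natural weak equivalences
\[
\pFGen{\bullet}{D}(Q(Z)) \overset{K}{\longrightarrow} \cdots \overset{L}{\longrightarrow} \pFGen{\bullet}{D}(R(Z)) \overset{M}{\longrightarrow} \mRNGenR{D}\bigl(\mRNGen{\pFGen{\bullet}{D}(R(Z))}{D}\bigr) \overset{\beta}{\longleftarrow} \mRNGenR{D}(R(Z)),
\]
verifying each leg individually (using in addition Remark~\ref{left-adj-pres-fib-obj} and that $\pFGen{\bullet}{D}$ preserves all weak equivalences). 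You instead pass immediately to the homotopy categories, use $\eta^+$ to exhibit $\mathbf{R}\mRNGenR{D}$ as a one-sided inverse of the equivalence $\mathbf{R}\mRNGen{-}{D}$, and finish by uniqueness of quasi-inverses. Your route is shorter and more conceptual; the paper's route has the virtue of displaying the actual comparison maps, which is what one would want if the isomorphism were to be used computationally later.

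One caveat: your closing ``equivalently, one may phrase this as uniqueness of adjoints'' invokes $\mathbf{L}\gCLM{D}\cong\mathbf{R}\mRNGen{-}{D}$, but in the paper's logical order that isomorphism is Corollary~\ref{der-funct-L-der-GC-eq}, which is \emph{deduced from} the present theorem (via Corollary~\ref{main-res-CC-mar}). So that alternative phrasing is circular as stated; drop it or supply an independent justification. Your primary argument does not depend on it and stands on its own.
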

\begin{proof}
	Let $(Q, q)$ be the cofibrant replacement functor and $(R, r)$ be the fibrant replacement functorin the coCartesian model category, both obtained by the chosen factorization system on the coCartesian model category. Let $Z$ be an object in $\sSetsMGen{D}$, we have the following chain of composable arrows in the projective model category $[D, \sSetsMQ]$:
		\begin{multline*}
	\pFGen{\bullet}{D}\left( Q(Z) \right) \overset{K} \to \pFGen{\bullet}{D}\left( Q\left( \mRNGen{\pFGen{\bullet}{D}(R(Z))}{D} \right)\right)
	\overset{q} \to \\
	\pFGen{\bullet}{D}\left( \mRNGen{\pFGen{\bullet}{D}(R(Z))}{D} \right) 
	\overset{L} \to \pFGen{\bullet}{D}(R(Z)) \overset{M} \to \mRNGenR{D}\left( \mRNGen{\pFGen{\bullet}{D}(R(Z))}{D}  \right)
	\end{multline*}
	The map $L$ in the above composite is $\pFGen{\bullet}{D}(\epsilon_{R(Z)})$ where $\epsilon$ is the counit of the adjunction $(\pFGen{\bullet}{D}, \mRNGen{-}{D})$. It follows from \cite[Prop. 3.2.5.18]{JL} that $\epsilon_{R(Z)}$ is a coCartesian equivalence and the left Quillen functor $\pFGen{\bullet}{D}$ preserves all weak-equivalences therefore $L$ is a weak-equivalence. The map $K$ in the above composite is the following:
	\begin{multline*}
	\pFGen{\bullet}{D}(Q(Z)) \overset{\pFGen{\bullet}{D}Q(\eta_{Z})} \to \pFGen{\bullet}{D} \left(Q\left(  \mRNGen{\pFGen{\bullet}{D}(Z)}{D} \right)  \right)
	\overset{\pFGen{\bullet}{D} \left(Q\left(  \mRNGen{\pFGen{\bullet}{D}(r_Z)}{D} \right)  \right)} \to \\
	 \pFGen{\bullet}{D} \left(Q\left(  \mRNGen{\pFGen{\bullet}{D}(R(Z))}{D} \right)  \right)
	\end{multline*}
	The unit natural transformation $\eta$ provides us the following commutative diagram:
	\[
	\xymatrix{
	Z \ar[r]^{\eta_Z \ \ \ \ \ \ } \ar[d]_{r_Z } & \mRNGen{\pFGen{\bullet}{D}(Z)}{D} \ar[d]^{\mRNGen{\pFGen{\bullet}{D}(r_Z)}{D}} \\
	R(Z) \ar[r]_{\eta_{R(Z)} \ \ \ \ \ \ \ \ \ \ \  }  & \mRNGen{\pFGen{\bullet}{D}(R(Z))}{D} 
    }
	\]
	The left downward arrow is clearly a weak equivalence. It follows from remark \ref{left-adj-pres-fib-obj} and \cite[Prop. 1.3.13]{Hovey} that the bottom right arrow is also a weak-equivalence in the above commutative square. This observation along with the facts that both functors $Q$ and $\pFGen{\bullet}{D}$ preserve weak-equivalences prove that $K$ is a weak-equivalence. The morphism $M$ is the unit map $\eta^+_{\pFGen{\bullet}{D}(R(Z))}$ which is a weak-equivalence by lemma \ref{unit-Eq-mar}. Thus we have constructed a natural weak-equivalence between the following two composite functors:
	\[
	\sSetsMGen{D} \overset{Q} \to (\sSetsMGen{D})^c \overset{\pFGen{\bullet}{D}} \to [D, \sSetsMQ]
	\]
	which we denote by $F$, and
	\begin{multline*}
	\sSetsMGen{D} \overset{R} \to (\sSetsMGen{D})^f \overset{\pFGen{\bullet}{D}} \to [D, \sSetsMQ] \overset{\mRNGen{-}{D}} \to \sSetsMGen{D} \overset{\mRNGenR{D}} \to [D, \sSetsMQ]
	\end{multline*}
	which we denote by $G$. We denote the above natural weak-equivalence by $\tau:F \Rightarrow G$. We observe that the  functor induced on the homotopy categories $Ho(F)$ is the same as the (total) left derived functor of $\pFGen{\bullet}{D}$. The composite functor $G$ is also a weak-equivalence preserving functor therefore it induces a functor on the homotopy categories which we denote by $Ho(G)$. It follows from \cite[lemma 1.2.2(iii)]{Hovey} that there is a unique natural isomorphism $Ho(\tau):Ho(F) \Rightarrow Ho(G)$. The Quillen equivalence \cite[]{JL}
   implies that $Ho(F)$ is an equivalence of categories therefore $Ho(G)$ is also an equivalence by the natural isomorphism $Ho(\tau)$.
	
	We construct another natural weak equivalence from the composite functor
	\[
	\sSetsMGen{D} \overset{R} \to (\sSetsMGen{D})^f \overset{\mRNGenR{D}} \to [D, \sSetsMQ],
	\]
	denoted by $H$, to the functor $G$, denoted by $\beta:H \Rightarrow G$. For any object $Z \in \sSetsMGen{D}$, the morphism $\beta_Z$ is defined as follows:
	\[
	\mRNGenR{D}\left( R(Z)  \right) \overset{\mRNGenR{D}\left( \eta_{R(Z)}  \right)} \to \mRNGenR{D}\left( \mRNGen{\pFGen{\bullet}{D}(R(Z))}{D}  \right)
	\]
	The map $\beta_Z$ is a weak-equivalence for each $Z$. We observe that the functor $H$ preserves weak-equivalences and therefore induces a functor on the homotopy categories which we denote by $Ho(H)$. This functor is the same as the (total) right derived functor of the right Quillen functor $\mRNGenR{D}$. The same argument as above tells us that $\beta$ uniquely determines a natural isomorphism $Ho(\beta):Ho(H) \Rightarrow G$. 
	\end{proof}

The above theorem along with lemma \ref{Qu-Eq-L-R-mar} implies the following corollary:
\begin{coro}
	\label{main-res-CC-mar}
	The Quillen pair $(\gCLM{D}, \mRNGenR{D})$ is a Quillen equivalence.
\end{coro}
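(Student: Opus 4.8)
The plan is to read off the corollary from Theorem~\ref{nat-isom-left-right-der-func} and Lemma~\ref{Qu-Eq-L-R-mar}, using the standard criterion that recognizes a Quillen equivalence by its derived functors. By Lemma~\ref{Qu-Eq-L-R-mar} the pair $(\gCLM{D}, \mRNGenR{D})$ is already a Quillen adjunction between the projective model category $[D, \sSetsMQ]$ and the coCartesian model category $\sSetsMGen{D}$, so the sole remaining task is to show that one of its total derived functors induces an equivalence of homotopy categories; I would work with the total right derived functor $\mathbf{R}\mRNGenR{D}$.

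The input for this is precisely the content of Theorem~\ref{nat-isom-left-right-der-func}. In the notation of its proof, the functor $Ho(H)$ is the total right derived functor of $\mRNGenR{D}$, while $Ho(F)$ is the total left derived functor of $\pFGen{\bullet}{D}$, and the theorem produces natural isomorphisms $Ho(\tau):Ho(F) \Rightarrow Ho(G)$ and $Ho(\beta):Ho(H) \Rightarrow Ho(G)$. Since $(\pFGen{\bullet}{D}, \mRNGen{-}{D})$ is the Quillen equivalence of \cite{JL}, the functor $Ho(F) = \mathbf{L}\pFGen{\bullet}{D}$ is an equivalence of homotopy categories; transporting this property first along $Ho(\tau)$ and then along $Ho(\beta)^{-1}$ shows that $\mathbf{R}\mRNGenR{D} = Ho(H)$ is an equivalence of homotopy categories as well.

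It then remains to invoke the standard characterization: a Quillen adjunction is a Quillen equivalence exactly when the total right derived functor of the right adjoint (equivalently, the total left derived functor of the left adjoint) is an equivalence of the associated homotopy categories, see \cite{Hovey}. Applied to the Quillen adjunction $(\gCLM{D}, \mRNGenR{D})$ of Lemma~\ref{Qu-Eq-L-R-mar} together with the equivalence $\mathbf{R}\mRNGenR{D}$ obtained above, this immediately yields that $(\gCLM{D}, \mRNGenR{D})$ is a Quillen equivalence.

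The only delicate point is already absorbed into Theorem~\ref{nat-isom-left-right-der-func}, namely the identification of $Ho(H)$ with the honest total right derived functor of $\mRNGenR{D}$ and its comparison with $Ho(G)$ via $Ho(\beta)$; once those natural isomorphisms are available the argument is purely formal, and I expect no additional homotopical obstacle beyond what the preceding lemmas already supply.
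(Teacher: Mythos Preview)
Your proposal is correct and follows exactly the route the paper intends: the paper states that the corollary follows from Theorem~\ref{nat-isom-left-right-der-func} together with Lemma~\ref{Qu-Eq-L-R-mar}, and your argument spells out precisely this implication by transporting the equivalence $Ho(F)$ along $Ho(\tau)$ and $Ho(\beta)$ to conclude that $\mathbf{R}\mRNGenR{D}$ is an equivalence, then invoking the standard Hovey criterion.
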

 We have shown the existence of the left Quillen functor $\gCLM{D}$ above. Now we will provide a construction of this left adjoint. Let $F:D \to \sSetsM$ be a functor. An $n$-simplex in the (total space of) $\gCLM{D}(F)$ is a pair $(\sigma, x)$, where $\sigma = (d_0 \overset{f_1} \to d_1 \overset{f_1} \to d_2 \overset{f_2} \to \cdots \overset{f_n} \to d_n)$ is an $n$-simplex of $N(D)$ and $x \in F(d_0)_n$. An edge $(f, x)$ is marked in $\gCLM{D}(F)$ if and only if the edge $x \in F(d_0)_1$ is a marked edge.
 
 \begin{rem}
 	\label{rel-bar-const}
 	The marked simplicial set (total space of) $\gCLM{F}$ is the bar construction $|B(\ast, C, F)|$ of the functor $F:D \to \sSetsM$.
 	We recall that $\sSetsMQ$ is a simplicial model category. See \cite{Shul}, \cite{Meyer} for the above notation and a description of a bar construction in a simplicial model category.
 	\end{rem}
 The following lemma is an easy consequence of the above description of the left Quillen functor $\gCLM{D}$:
 \begin{lem}
 	\label{fib-img-L}
 	For each functor $F:D \to \sSetsMQ$ which is a fibrant object in the projective model category $[D, \sSetsMQ]$, the object $\gCLM{D}(F)$ is a fibrant object of $\sSetsMGen{D}$.
 	\end{lem}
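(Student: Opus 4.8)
The plan is to unwind the characterization of fibrant objects in the coCartesian model category of Theorem~\ref{CC-Mdl-Str}: by the results of \cite{JL} already invoked in the proof of Proposition~\ref{char-cc-eq}, an object of $\sSetsMGen{D}$ is fibrant precisely when its underlying map to $N(D)$ is a coCartesian fibration (Definition~\ref{CC-fib}) and its marked edges are exactly the $p$-coCartesian edges (Definition~\ref{p-CC-edge}). So I would verify these two conditions for the projection $p:\gCLM{D}(F)\to N(D)$, reading everything off the explicit description preceding the statement: an $n$-simplex is a pair $(\sigma,x)$ with $\sigma=(d_0\overset{f_1}{\to}\cdots\overset{f_n}{\to}d_n)\in N(D)_n$ and $x\in F(d_0)_n$; the face $d_j(\sigma,x)$ for $j\ge 1$ leaves the source $d_0$ fixed, while $d_0(\sigma,x)=(d_0\sigma,\,F(f_1)(d_0 x))$ transports the fibre datum along the initial edge.

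First I would check that $p$ is an inner fibration. Given an inner horn $\Lambda^i[n]\to\gCLM{D}(F)$ with $0<i<n$ together with the (unique) filler $\sigma\in N(D)_n$ of its image in the nerve, producing a lift amounts to producing a single $x\in F(d_0)_n$ whose faces $d_j x$ for $j\in\{1,\dots,n\}\setminus\{i\}$ are prescribed by the horn and whose initial face satisfies $F(f_1)(d_0 x)=y_0$, where $(d_0\sigma,y_0)$ is the $0$-face of the horn. Since $d_0$ is constant along every $d_j$ with $j\ge 1$, the prescribed faces assemble into a horn in the single simplicial set $F(d_0)$, which is a quasi-category by the projective fibrancy of $F$; the content is to fill this horn compatibly with the constraint on $d_0 x$ imposed through the structure map $F(f_1)$. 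This interplay between ordinary inner horn filling inside $F(d_0)$ and the $F(f_1)$-twisted $0$-face is exactly the step I expect to be the \emph{main obstacle}, and it is where projective fibrancy of $F$ — each $F(d)$ a quasi-category, together with the compatibility of the maps $F(f)$ — must be used essentially.

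For the coCartesian edges, given a vertex $(d_0,a)$ and an edge $f:d_0\to d_1$ of $N(D)$, the degenerate datum furnishes a lift $(f,s_0 a)$ from $(d_0,a)$ to $(d_1,F(f)(a))$; I would show this edge is $p$-coCartesian and note that it is marked because $s_0 a$ is degenerate, so coCartesian lifts exist, and then, using that the marked edges of each $F(d)$ are its equivalences, identify the marked edges of $\gCLM{D}(F)$ with the $p$-coCartesian ones. As an independent check on the homotopical content I would compare $\gCLM{D}(F)$ with the marked relative nerve through the map adjoint to the unit $\eta^+_F$ of Definition~\ref{unit-comp-Gr}: by Proposition~\ref{Rel-Ner-isom-func-val} both objects have fibre $F(d)$ over each $d$, by Lemma~\ref{unit-Eq-mar} the unit is an objectwise equivalence, and $\mRNGen{F}{D}$ is already known to be fibrant by \cite[Prop.~3.2.5.18(2)]{JL} together with Lemma~\ref{isom-Rel-Ner}. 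Combined with the identification of $\gCLM{D}(F)$ as the bar construction in Remark~\ref{rel-bar-const}, this pins down its homotopy type and confirms consistency with the Quillen equivalence of Corollary~\ref{main-res-CC-mar}.
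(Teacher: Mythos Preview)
The paper offers no proof; it asserts the lemma as ``an easy consequence of the above description of the left Quillen functor $\gCLM{D}$.'' Your plan---verify directly that $p:\gCLM{D}(F)\to N(D)$ is a coCartesian fibration with the correct marked edges---is the natural reading of that sentence, and you have correctly isolated the delicate step: filling an inner horn $\Lambda^i[n]$ over $\sigma$ requires producing $x\in F(d_0)_n$ whose $0$-th face, pushed forward along $F(f_1)$, equals a \emph{prescribed} element of $F(d_1)_{n-1}$, while nothing in the hypothesis controls the image of $F(f_1)$.

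That obstacle is not merely delicate; it is fatal, and the lemma is false as stated. Take $D=[1]$, $F(0)=\Fl{\Delta[0]}$, $F(1)=\Fl{\Delta[1]}$ (both fibrant in $\sSetsMQ$), with $F(0\to 1)$ the inclusion of the vertex $0$. Over the $2$-simplex $\sigma=s_1(0\to 1)\in N([1])_2$ there is a \emph{unique} $2$-simplex $(\sigma,\ast)$ of $\gCLM{[1]}(F)$, and the fibre component of its $0$-th face is the degenerate edge $s_0(0)\in F(1)_1$. But the inner horn $\Lambda^1[2]\to\gCLM{[1]}(F)$ whose $2$-face is the unique edge over $0\to 1$ and whose $0$-face is the non-degenerate edge of $F(1)=\Delta[1]$ satisfies the compatibility $d_0 x_2=d_1 x_0$ yet admits no filler. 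Thus $p$ is not even an inner fibration. The downstream Proposition~\ref{comp-L-Gr-const} may still hold, but its proof cannot go through \cite[Prop.~3.1.3.5]{JL} via this lemma; a different argument is needed (compare \cite{HM} for the unmarked covariant case), and your ``independent check'' comparing $\gCLM{D}(F)$ with the marked relative nerve via $\eta^+_F$ is much closer to what such an argument actually requires.
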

 
 Based on the above description of the functor $\gCLM{D}$, it is easy to see an inclusion natural transformation:
 \begin{equation*}
 \iota_D:\gCLM{D} \Rightarrow \mRNGen{-}{D}.
 \end{equation*}
 For a functor $F:D \to \sSetsM$ the simplicial map $\iota_D(F)$ determined by the above natural transformation can be described, in degree $n$, as follows:
 \[
 (\sigma, x) \mapsto (\sigma, (\ud{\beta}, F(f_n \circ \cdots \circ f_1)(x))),
 \]
 where $\ud{\beta} = (d_{n-1}(F(f_{n-2} \circ \cdots \circ f_1)(d_n(x))), d_n(F(f_{n-1} \circ \cdots \circ f_1)(x)))$.
 \begin{prop}
 	\label{comp-L-Gr-const}
 	For a fibrant diagram $F$ in $[D, \sSetsMQ]$, the map $\iota_D(F)$ is a coCartesian equivalence.
 	\end{prop}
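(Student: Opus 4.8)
The plan is to reduce the statement to a fiberwise comparison of two fibrant objects. First I would record that both the source and the target of $\iota_D(F)$ are fibrant objects of the coCartesian model category $\sSetsMGen{D}$: the object $\gCLM{D}(F)$ is fibrant by Lemma \ref{fib-img-L}, while $\mRNGen{F}{D}$ is fibrant by \cite[Prop. 3.2.5.18(2)]{JL} together with Lemma \ref{isom-Rel-Ner}, exactly as in the proof of Lemma \ref{unit-Eq-mar}. Consequently each of them is the canonical marking of a coCartesian fibration over $N(D)$, so that its marked edges coincide with its coCartesian edges. The main tool will be the standard fact from the theory of coCartesian fibrations \cite[Ch. 3]{JL}: a morphism between fibrant objects of $\sSetsMGen{D}$ is a coCartesian equivalence if and only if it restricts to a categorical equivalence on the fiber over each vertex $d \in N(D)$.

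Next I would check that $\iota_D(F)$ is a legitimate morphism of the coCartesian model category, i.e. a map of marked simplicial sets over $N(D)$ that preserves coCartesian edges. That it lies over $N(D)$ is immediate from the explicit formula $(\sigma, x) \mapsto (\sigma, (\ud{\beta}, F(f_n \circ \cdots \circ f_1)(x)))$, since the first coordinate $\sigma$ is left unchanged. To see that coCartesian edges are preserved, recall that an edge $(f, x)$ of $\gCLM{D}(F)$ with $f:d_0 \to d_1$ is marked precisely when $x \in F(d_0)_1$ is marked, and that its image is the edge of $\mRNGen{F}{D}$ whose $F(d_1)$-component is $F(f)(x)$. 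Because $F$ takes values in $\sSetsM$, the transition map $F(f):F(d_0) \to F(d_1)$ preserves marked edges, so $F(f)(x)$ is marked in $F(d_1)$, which by Definition \ref{mar-rel-Ner} is exactly the condition for the image edge to be marked in $\mRNGen{F}{D}$. Hence $\iota_D(F)$ preserves marked, and therefore coCartesian, edges.

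It then remains to identify the fibers and the induced map. On the target, the fiber of $\mRNGen{F}{D} \to N(D)$ over $d$ is isomorphic to $F(d)$ by Proposition \ref{Rel-Ner-isom-func-val}. On the source, the explicit description of $\gCLM{D}(F)$ shows that an $n$-simplex lying over the totally degenerate $n$-simplex at $d$ is a pair $(\mathrm{const}_d, x)$ with $x \in F(d)_n$, so the fiber over $d$ is again $F(d)$. Under these identifications the formula for $\iota_D(F)$ reduces, over the constant simplex, to $(\mathrm{const}_d, x) \mapsto (\mathrm{const}_d, (\ud{\beta}, F(\mathrm{id}_d)(x))) = (\mathrm{const}_d, x)$, since every composite of identities is the identity; thus $\iota_D(F)$ restricts to the identity $F(d) \to F(d)$ on each fiber. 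In particular it is a fiberwise categorical equivalence, and the criterion cited above finishes the argument.

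The main obstacle is the fiberwise-detection criterion, which must be invoked in precisely the marked form appropriate to $\sSetsMGen{D}$: one must be certain that coCartesian equivalence between fibrant objects is detected on fibers, and that $\iota_D(F)$ genuinely preserves the coCartesian marking rather than merely lying over $N(D)$. Should the fiberwise criterion be inconvenient to cite in the exact form needed, an in-paper alternative is to verify condition $(3)$ of Proposition \ref{char-cc-eq} directly: for every test functor $Z:D \to \sSetsM$ with $Z(d)$ a quasi-category whose marked edges are equivalences, one shows that $\pi_0\Shmap{\iota_D(F)}{\mRNGen{Z}{D}}{D}$ is a bijection, rewriting the two mapping spaces by means of the two adjunctions at our disposal. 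I expect the fiberwise route to be the shorter of the two, with the verification of marking-preservation in the second paragraph being the only genuinely technical point.
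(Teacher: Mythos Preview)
Your proposal is correct and follows essentially the same route as the paper: the paper simply observes that $\iota_D(F)$ induces an isomorphism on fibers and then invokes \cite[Prop.~3.1.3.5]{JL}, which is precisely the fiberwise-detection criterion you identified. Your write-up is more detailed in verifying fibrancy of both sides, preservation of the marking, and the explicit identification of the fiber map with the identity, but the underlying argument is the same.
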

 \begin{proof}
 	We observe that the map $\iota_D(F)$ induces an isomorphism on the fibers. Now the proposition follows from \cite[Prop. 3.1.3.5.]{JL}
 
 	\end{proof}
 \begin{coro}
 	\label{homt-func-L}
 	The left Quillen functor $\gCLM{D}$ is a homotopical functor.
 	\end{coro}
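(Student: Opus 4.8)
The plan is to show that $\gCLM{D}$ carries every weak equivalence of the projective model structure on $[D,\sSetsMQ]$ to a coCartesian equivalence in $\sSetsMGen{D}$. The three ingredients I would use are: that $\gCLM{D}$ is a left Quillen functor (Lemma \ref{Qu-Eq-L-R-mar}), hence preserves trivial cofibrations; the natural transformation $\iota_D:\gCLM{D} \Rightarrow \mRNGen{-}{D}$ together with Proposition \ref{comp-L-Gr-const}, which says that $\iota_D(F)$ is a coCartesian equivalence whenever $F$ is projectively fibrant; and the fact that $\mRNGen{-}{D}$ is the right Quillen functor of a Quillen adjunction, so it preserves weak equivalences between fibrant objects. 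The strategy splits into two stages: first handle weak equivalences between fibrant diagrams using the naturality square of $\iota_D$, and then bootstrap to arbitrary weak equivalences by a fibrant replacement argument.

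For the first stage, let $\psi:F \to G$ be a weak equivalence between projectively fibrant diagrams. Naturality of $\iota_D$ gives the commutative square
\[
\xymatrix{
\gCLM{D}(F) \ar[r]^{\gCLM{D}(\psi)} \ar[d]_{\iota_D(F)} & \gCLM{D}(G) \ar[d]^{\iota_D(G)} \\
\mRNGen{F}{D} \ar[r]_{\mRNGen{\psi}{D}} & \mRNGen{G}{D}
}
\]
Since $F$ and $G$ are fibrant, Proposition \ref{comp-L-Gr-const} shows that the two vertical maps are coCartesian equivalences, and since $\mRNGen{-}{D}$ is a right Quillen functor it preserves the weak equivalence $\psi$ between fibrant objects, so the bottom map is a coCartesian equivalence. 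The two-out-of-three property then forces $\gCLM{D}(\psi)$ to be a coCartesian equivalence.

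For the second stage, let $\phi:F \to G$ be an arbitrary weak equivalence. I would take fibrant replacements obtained by factoring the maps to the terminal object as trivial cofibrations $r_F:F \to R(F)$ and $r_G:G \to R(G)$ into fibrant diagrams, and then lift $r_G \circ \phi$ along the trivial cofibration $r_F$ (using fibrancy of $R(G)$) to obtain $R(\phi):R(F) \to R(G)$ fitting into a commutative square with $r_F$ and $r_G$ on the sides. By two-out-of-three, $R(\phi)$ is a weak equivalence between fibrant diagrams, so $\gCLM{D}(R(\phi))$ is a coCartesian equivalence by the first stage. Because $\gCLM{D}$ is left Quillen it sends $r_F$ and $r_G$ to coCartesian equivalences; applying $\gCLM{D}$ to the replacement square and invoking two-out-of-three once more yields that $\gCLM{D}(\phi)$ is a coCartesian equivalence, which is exactly the assertion that $\gCLM{D}$ is homotopical.

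The main obstacle is the passage from fibrant diagrams to arbitrary ones. Ken Brown's lemma only guarantees that a left Quillen functor preserves weak equivalences between \emph{cofibrant} objects, and since the projective model structure on $[D,\sSetsMQ]$ does not have every object cofibrant, this does not by itself give a homotopical functor; moreover Proposition \ref{comp-L-Gr-const} is available only on fibrant diagrams. The decisive observation is that the fibrant replacement $F \to R(F)$ may be taken to be a trivial cofibration, which $\gCLM{D}$ preserves precisely because it is left Quillen. This is the only place where the left Quillen property (rather than Proposition \ref{comp-L-Gr-const} alone) is genuinely needed, and it is what lets the fibrant case propagate to all weak equivalences.
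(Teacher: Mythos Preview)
Your proposal is correct and follows essentially the same approach as the paper's proof: first establish that $\gCLM{D}$ preserves weak equivalences between fibrant objects via the naturality square of $\iota_D$ and Proposition \ref{comp-L-Gr-const}, then bootstrap to arbitrary weak equivalences by fibrant replacement through trivial cofibrations together with the left Quillen property and two-out-of-three. The only cosmetic difference is that the paper invokes a functorial fibrant replacement (so $R(H)$ comes for free), whereas you construct $R(\phi)$ by an explicit lifting argument; both are fine.
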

 \begin{proof}
 	Since $\mRNGen{-}{D}$ is a right Quillen functor, proposition \ref{comp-L-Gr-const} implies that $\gCLM{D}$ preserves weak-equivalences between fibrant objects. Let $H:X \to Y$ be a weak equivalence in the projective model category $[D, \sSetsMQ]$. The chosen fibrant replacement functor $(r, R)$ gives us the following commutative diagram in $[D, \sSetsMQ]$ wherein the horizontal maps are acyclic cofibrations:
 	\begin{equation*}
 	\xymatrix{
 	X \ar[d]_H \ar[r]^{r(X)} & R(X) \ar[d]^{R(H)} \\
 	Y \ar[r]_{r(Y)} & R(Y)
    }
 	\end{equation*}
 	Applying the left Quillen functor $\gCLM{D}$ to the above commutative square gives us the following commutative square in $\sSetsMGen{D}$:
 		\begin{equation*}
 	\xymatrix@C=16mm{
 		\gCLM{D}(X) \ar[d]_{\gCLM{D}(H)} \ar[r]^{\gCLM{D}(r(X))} & \gCLM{D}(R(X)) \ar[d]^{\gCLM{D}(R(H))} \\
 		\gCLM{D}(Y) \ar[r]_{\gCLM{D}(r(Y))} & \gCLM{D}(R(Y))
 	}
 	\end{equation*}
 	The two horizontal arrows are weak-equivalences because $\gCLM{D}$ preserves acyclic cofibrations. We have observed above that $\gCLM{D}$ preserves weak-equivalences between fibrant objects therefore $\gCLM{D}(R(H)$ is a weak-equivalence. Now the two out of three property of weak-equivalences in a model category implies that $\gCLM{D}(H)$ is a weak-equivalence.

 	\end{proof}
 For the terminal map $u:N(D) \to \ast$ induces a \emph{pullback} functor
 \[
 u^*:\sSetsM \to \sSetsMGen{D}
 \]
 In this situation $u^*(X) = (N(D) \times X, p_1)$ for each object $X \in \sSetsM$, where $p_1$ is the projection to $N(D)$. The functor $u^*$ has a left adjoint
 \[
 u_!:\sSetsMGen{D} \to \sSetsM
 \]
 which maps an object $(X, p) \in \sSetsMGen{D}$ to the compopsite $X \to N(D) \to \ast$. It is easy to check that the adjunction $(u_!, u^*)$ is a Quillen adjunction. Now we define a homotopy colimit functor, see definition \ref{hocolim-func}:
 \begin{prop}
 	\label{hocolim-func-mar-SS}
 	The composite right Quillen functor $u_! \circ \gCLM{D}$ is a homotopy colimit functor.
 	\end{prop}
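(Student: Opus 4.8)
The plan is to verify the two conditions in Definition \ref{hocolim-func}: that the composite $u_! \circ \gCLM{D}$ is a \emph{homotopical} functor, and that its total left derived functor is naturally isomorphic to the total left derived functor of the colimit functor $\varinjlim_D:[D, \sSetsMQ] \to \sSetsM$. The colimit functor is the left adjoint of the constant diagram functor $c:\sSetsM \to [D, \sSetsMQ]$; since $c$ preserves objectwise (acyclic) fibrations, the pair $(\varinjlim_D, c)$ is a Quillen adjunction, and $c$ is itself homotopical, so $\mathbf{R}c = c$ on the homotopy categories.

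First I would check that $u_! \circ \gCLM{D}$ is homotopical. By Corollary \ref{homt-func-L}, $\gCLM{D}$ is homotopical, so it suffices to see that $u_!$ preserves all weak equivalences. By Theorem \ref{CC-Mdl-Str} the cofibrations of the coCartesian model category $\sSetsMGen{D}$ are the monomorphisms, so \emph{every} object of $\sSetsMGen{D}$ is cofibrant. As $(u_!, u^*)$ is a Quillen adjunction, $u_!$ is a left Quillen functor, and Ken Brown's lemma \cite[Lemma 1.1.12]{Hovey} then shows that $u_!$ sends weak equivalences between cofibrant objects to weak equivalences; since all objects are cofibrant, $u_!$ is homotopical. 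Hence $u_! \circ \gCLM{D}$ is homotopical, and its total left derived functor is (naturally isomorphic to) itself.

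The core of the argument is the identification of this derived functor with the homotopy colimit. The composite $u_! \circ \gCLM{D}$ is a composite of left Quillen functors, hence left Quillen, with right adjoint $\mRNGenR{D} \circ u^*$; moreover $\gCLM{D}$ preserves cofibrant objects (all objects are cofibrant), so the derived adjunction exists and $\mathbf{L}(u_! \circ \gCLM{D})$ is left adjoint to $\mathbf{R}(\mRNGenR{D} \circ u^*)$ on the homotopy categories. By uniqueness of adjoints it therefore suffices to produce a natural isomorphism $\mathbf{R}(\mRNGenR{D} \circ u^*) \cong \mathbf{R}c = c$. To this end I would fix a fibrant $X \in \sSetsMQ$; then $u^*(X)$ is fibrant in $\sSetsMGen{D}$ because $u^*$ is right Quillen, and by definition
\[
\left(\mRNGenR{D}(u^*(X))\right)(d) = \mapGenM{\Sh{N(\ovCatGen{d}{D})}}{u^*(X)}{D}.
\]
By Lemma \ref{main-lemma-mar} the canonical map $\iota^+_d:\Fl{\Delta[0]} \to \Sh{N(\ovCatGen{d}{D})}$ is a coCartesian equivalence, and, since the coCartesian model category is enriched in $\sSetsMQ$ through $[-,-]^+_D$ (Remark \ref{enrich-mar-sSets}), the induced map $[\iota^+_d, u^*(X)]^+_D$ is a weak equivalence onto $\mapGenM{\Fl{\Delta[0]}}{u^*(X)}{D}$. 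The latter is the fibre of $u^*(X) = (N(D) \times X, p_1)$ over $d$, which is canonically isomorphic to $X$ (this is the elementary analogue for $u^*(X)$ of Proposition \ref{Rel-Ner-isom-func-val}; compare the fibre computation in the proof of Lemma \ref{unit-Eq-mar}). These maps are natural in $d$ and assemble into a natural weak equivalence $\mRNGenR{D}(u^*(X)) \xrightarrow{\sim} c(X)$ in $[D, \sSetsMQ]$, natural in the fibrant object $X$. Passing to homotopy categories yields $\mathbf{R}(\mRNGenR{D} \circ u^*) \cong c = \mathbf{R}c$, whence $\mathbf{L}(u_! \circ \gCLM{D}) \cong \mathbf{L}\varinjlim_D$, and together with the previous paragraph this exhibits $u_! \circ \gCLM{D}$ as a homotopy colimit functor.

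I expect the main obstacle to be the final identification: one must check that the equivalences $[\iota^+_d, u^*(X)]^+_D$ are natural in \emph{both} $d$ and $X$, so that they constitute a morphism of functors rather than a merely objectwise family, and one must justify the transition from the derived right adjoint back to the derived left adjoint. An alternative route, bypassing the adjoint comparison, is offered by Remark \ref{rel-bar-const}: for $F:D \to \sSetsM$ the object $u_! \gCLM{D}(F)$ is the bar construction $|B(\ast, D, F)|$, which computes the homotopy colimit in the simplicial model category $\sSetsMQ$ by the standard theory of bar constructions \cite{Shul}, \cite{Meyer}; but that route still demands a verification that the markings created by $\gCLM{D}$ produce the correct marked homotopy type after $u_!$ is applied, which is essentially the same delicate point.
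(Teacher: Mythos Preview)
Your proof is correct and follows essentially the same route as the paper: both establish homotopicity via Corollary \ref{homt-func-L} and the fact that every object of $\sSetsMGen{D}$ is cofibrant, and both identify the derived right adjoint $\mRNGenR{D}\circ u^*$ with the constant-diagram functor by applying $[\iota_d^+,-]^+_D$ to the fibrant object $u^*(R(X))$ and invoking Lemma \ref{main-lemma-mar} together with Remark \ref{enrich-mar-sSets}. The only cosmetic difference is that the paper phrases the passage to homotopy categories via \cite[33.9(ii)]{DHKS} rather than via uniqueness of adjoints.
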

 \begin{proof}
 	We have seen above (cor. \ref{homt-func-L}) that the functor $\gCLM{D}$ is homotopical. The functor $u_!$ is also homotopical because it is a left Quillen functor wherein every object of the domain model category is cofibrant.
 	
 	Let $(R, r)$ be the fibrant replacement functor determined by the choses functorial factorization on the model catgory $ \sSetsMQ$.
 	We now construct a natural weak equivalence
 	\begin{equation*}
 		\label{hocol-nat-WE}
 		\delta:\mRNGenR{D}u^*R \Rightarrow \Delta R
 		\end{equation*}
 	For a marked simplicial set $X$, the functor $\mRNGenR{D}u^*R(X)$ is defined as follows:
 	\[
 	\mRNGenR{D}u^*R(X)(d) = \mRNGenR{D}(\Sh{N(D)} \times R(X))(d) = \mapFl{\Sh{N(\ovCatGen{d}{D})}}{N(D) \times R(X)}_D
 	 \]
 	The functor $\Delta R$ is isomorphic to the functor 
 	\[
 	\mapFl{(\Sh{\Delta[0]}, -)}{N(D) \times R(-)}_D: \sSetsM \to [D, \sSetsM],
 	\]
 	where $(\Sh{\Delta[0]}, d)$ denotes the map $d:\Sh{\Delta[0]} \to \Sh{N(D)}$. Now the desired natural weak-equivalence $\delta$ is defined as follows for each pair $(X, d) \in Ob(\sSetsM) \times Ob(D)$:
 	\[
 	\delta_X(d) := \mapFl{\iota_d^+}{N(D) \times R(X)}_D:\mRNGenR{D}u^*R(X)(d) \to R(X),
 	\]
 	 where the map $\iota_d^+$ is defined in \eqref{main-gen-local-mar}. The natural transformation $\eta_X$ is a natural weak equivalence because $\iota_d^+$ is a weak equivalence.
 	 
 	 \begin{sloppypar}
 	  Now it follows from \cite[33.9 (ii)]{DHKS} that the natural weak-equivalence $\mapFl{\iota_d^+}{N(D) \times R(-)}_D$ induces a natural isomorphism between the (derived) functors $Ho(\mRNGenR{D}u^*R)$ and $Ho(\Delta R)$. Since the functor $\Delta$ is homotopical, it follows from \cite[33.9 (ii)]{DHKS} that there is a natural isomorphism between $Ho(\Delta R)$ and $Ho(\Delta)$.
 	\end{sloppypar}
 	\end{proof}
 
The proposition has the following corollary:
\begin{coro}
	\label{der-funct-L-der-GC-eq}
	The (total) left derived functor of $\gCLM{D}$ is naturally isomorphic to the (total) right derived functor of $\mRNGen{-}{D}$.
	\end{coro}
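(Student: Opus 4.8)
The plan is to deduce the corollary directly from the comparison natural transformation $\iota_D : \gCLM{D} \Rightarrow \mRNGen{-}{D}$ introduced above, exploiting the fact that the two total derived functors in question are computed by different recipes which $\iota_D$ nevertheless interpolates. The essential observation is that $\gCLM{D}$ is \emph{homotopical} by Corollary \ref{homt-func-L}, so its total left derived functor $\mathbf{L}\gCLM{D}$ is represented on the homotopy category simply by $\gCLM{D}$ itself, with no cofibrant replacement required. On the other side, $\mRNGen{-}{D}$ is a right Quillen functor (the right adjoint of the Quillen equivalence $(\pFGen{\bullet}{D}, \mRNGen{-}{D})$), so its total right derived functor is represented by $\mathbf{R}\mRNGen{-}{D}(F) = \mRNGen{R(F)}{D}$, where $(R, r)$ is the chosen fibrant replacement in the projective model category $[D, \sSetsMQ]$.

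The key step is to produce, naturally in $F$, a coCartesian equivalence $\gCLM{D}(F) \to \mRNGen{R(F)}{D}$. I would obtain this from the naturality square of $\iota_D$ with respect to the fibrant replacement map $r_F : F \to R(F)$:
\[
\xymatrix{
\gCLM{D}(F) \ar[r]^{\iota_D(F)} \ar[d]_{\gCLM{D}(r_F)} & \mRNGen{F}{D} \ar[d]^{\mRNGen{r_F}{D}} \\
\gCLM{D}(R(F)) \ar[r]_{\iota_D(R(F))} & \mRNGen{R(F)}{D}
}
\]
The left vertical arrow $\gCLM{D}(r_F)$ is a weak equivalence because $\gCLM{D}$ is homotopical (Corollary \ref{homt-func-L}) and $r_F$ is a weak equivalence; the bottom horizontal arrow $\iota_D(R(F))$ is a coCartesian equivalence by Proposition \ref{comp-L-Gr-const}, since $R(F)$ is a fibrant diagram. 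Hence the composite along the lower-left edge, $\gCLM{D}(F) \to \gCLM{D}(R(F)) \to \mRNGen{R(F)}{D}$, is a coCartesian equivalence, and it is clearly natural in $F$.

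To finish, I would descend this objectwise natural weak equivalence to the homotopy categories. Exactly as in the proof of Theorem \ref{nat-isom-left-right-der-func}, invoking \cite[33.9 (ii)]{DHKS} (or \cite[lemma 1.2.2(iii)]{Hovey}), the natural weak equivalence just constructed induces a natural isomorphism $\mathbf{L}\gCLM{D} \Rightarrow \mathbf{R}\mRNGen{-}{D}$ of functors $Ho([D, \sSetsMQ]) \to Ho(\sSetsMGen{D})$, which is precisely the assertion.

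The main point requiring care is the legitimacy of representing $\mathbf{L}\gCLM{D}$ by $\gCLM{D}$ on arbitrary (non-cofibrant) objects: this rests entirely on the homotopicality established in Corollary \ref{homt-func-L}, and one must also verify that the comparison map is strictly natural at the point-set level before the DHKS/Hovey descent applies. As an independent consistency check, the statement can be recovered purely formally: both $(\pFGen{\bullet}{D}, \mRNGen{-}{D})$ (Lurie's relative-nerve Quillen equivalence) and $(\gCLM{D}, \mRNGenR{D})$ (Corollary \ref{main-res-CC-mar}) are Quillen equivalences, so each total derived adjunction is an adjoint equivalence of homotopy categories with mutually inverse functors; combining $\mathbf{L}\gCLM{D} \cong (\mathbf{R}\mRNGenR{D})^{-1}$, the isomorphism $\mathbf{R}\mRNGenR{D} \cong \mathbf{L}\pFGen{\bullet}{D}$ of Theorem \ref{nat-isom-left-right-der-func}, and $\mathbf{L}\pFGen{\bullet}{D} \cong (\mathbf{R}\mRNGen{-}{D})^{-1}$ then yields $\mathbf{L}\gCLM{D} \cong \mathbf{R}\mRNGen{-}{D}$.
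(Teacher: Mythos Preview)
Your argument is correct and is essentially the route the paper has set up: the paper states the corollary without proof, relying on the immediately preceding results (Proposition~\ref{comp-L-Gr-const} and Corollary~\ref{homt-func-L}), and your zig-zag $\gCLM{D}(F) \to \gCLM{D}(R(F)) \to \mRNGen{R(F)}{D}$ via $\iota_D$ is precisely the natural weak equivalence those results were designed to produce. Your alternative formal argument through the two Quillen equivalences and Theorem~\ref{nat-isom-left-right-der-func} is also valid and gives an independent check.
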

\begin{nota}
	The total (total) right derived functor of $\mRNGen{-}{D}$ refers to the total right derived functor of the relative nerve functor for marked simplicial sets, see \cite[Prop. 3.2.5.18(2)]{JL}.
	\end{nota}

\begin{prop}
	\label{simp-adj}
	For all functor $X:D \to \sSetsM$ and $K \in \sSets$ we have the following isomorphism
	\[
	\gCLM{D}(X \otimes K) \cong \gCLM{D}(X) \otimes K.
	\]
	\end{prop}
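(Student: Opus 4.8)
The plan is to read off the isomorphism directly from the explicit pointwise description of $\gCLM{D}$ recorded just above. Recall that for $K \in \sSets$ the tensor in the $\sSetsQ$-enriched functor category $[D,\sSetsMQ]$ with the projective structure is computed objectwise, so $(X \otimes K)(d) = \Fl{K} \times X(d)$ in $\sSetsM$, while on the coCartesian side the tensor of $\gCLM{D}(X)$ by $K$ is $\gCLM{D}(X) \otimes K = \Fl{K} \times \gCLM{D}(X)$ in $\sSetsMGen{D}$. Thus both sides are obtained from the ``bare'' construction by inserting a factor of $K$, and the claim is exactly that this factor commutes past $\gCLM{D}$.

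First I would compute the underlying simplicial set of each side in degree $n$. Using that an $n$-simplex of $\gCLM{D}(F)$ is a pair $(\sigma, x)$ with $\sigma \in N(D)_n$ and $x \in F(d_0)_n$, where $d_0$ is the source vertex of $\sigma = (d_0 \to \cdots \to d_n)$, the left hand side becomes
\[
\gCLM{D}(X \otimes K)_n = \bigsqcup_{\sigma \in N(D)_n} \{\sigma\} \times \bigl( \Fl{K} \times X(d_0) \bigr)_n = \bigsqcup_{\sigma \in N(D)_n} \{\sigma\} \times K_n \times X(d_0)_n,
\]
whereas the right hand side is
\[
\bigl( \gCLM{D}(X) \otimes K \bigr)_n = K_n \times \bigsqcup_{\sigma \in N(D)_n} \{\sigma\} \times X(d_0)_n.
\]
The evident swap bijection $(\sigma, k, x) \mapsto (k, \sigma, x)$ identifies the two in each degree.

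Next I would check that this degreewise bijection is an isomorphism in $\sSetsMGen{D}$. Simplicial naturality holds because in both constructions the $K$-coordinate is a free factor: for $\theta:[m] \to [n]$ the structure map of $X \otimes K$ applies to the $X$-coordinate the map $X(\theta)$ together with the functoriality morphism $X(d_0 \to d_{\theta(0)})$ when $\theta$ does not preserve the source vertex, and applies $\theta^\ast$ to the $K$-coordinate, which is exactly the behaviour of $\Fl{K} \times \gCLM{D}(X)$; hence the swap commutes with all faces and degeneracies. Compatibility with the projections to $N(D)$ is immediate, since both send a simplex to its $\sigma$-component. Finally the markings agree: an edge of the left hand side is marked iff its $\Fl{K} \times X(d_0)$-component $(k,x)$ is marked, i.e.\ iff $k$ is degenerate and $x$ is a marked edge of $X(d_0)$; an edge of the right hand side is marked iff its $\Fl{K}$-entry $k$ is degenerate and its $\gCLM{D}(X)$-entry $(\sigma, x)$ is marked, i.e.\ again iff $k$ is degenerate and $x$ is marked. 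So the swap preserves markings and is the desired isomorphism.

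I do not expect a serious obstacle; the only point requiring care is the bookkeeping of the two marking conventions, where one must unwind the product marking on $\Fl{K} \times (-)$ on each side and observe that ``$K$-coordinate degenerate and $X(d_0)$-coordinate marked'' is the common condition. Alternatively one could argue conceptually: since $\gCLM{D}$ is a left adjoint (to $\mRNGenR{D}$) and the adjunction is compatible with the $\sSetsQ$-enrichment via $\Flmap{-}{-}{D}$, it is an enriched left adjoint and therefore automatically preserves tensors; but the direct computation above is shorter and entirely self-contained given the explicit formula for $\gCLM{D}$.
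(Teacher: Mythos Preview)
The paper states this proposition without proof, so there is nothing to compare against; your argument supplies the missing verification and is correct. The explicit degreewise identification using the description of $\gCLM{D}$ just above the proposition, together with your check that the swap respects the simplicial operators (including the $d_0$-face, which is the only one that invokes $X(f_1)$ and hence leaves the $K$-coordinate untouched because $(X\otimes K)(f_1)=\id_{\Fl{K}}\times X(f_1)$), the projection to $\Sh{N(D)}$, and the markings, is exactly what is needed. Your closing remark that this also follows formally from $\gCLM{D}$ being an $\sSetsQ$-enriched left adjoint is a valid alternative and is presumably why the paper omits the proof.
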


%
%
%
%
\section[A homotopy colimit functor]{A homotopy colimit functor for diagrams of quasi-categories}
\label{hoColim-QCat}

The homotopy colimit of a functor $F:D \to \sSetsK$ has a standard construction, namely it is the \emph{diagonal} of the bisimplicial set obtained by applying the nerve functor to the  transport category (Grothendieck construction) of the $n$-simplex functors $F_n$, see \cite[Ch. IV]{GJ}.
In this section we will present a homotopy colimit construction for functors taking values in the Joyal model category of simplicial sets $\sSetsQ$. Our construction is a modification of the aforementioned construction. Our construction will use the homotopy colimit functor constructed on the category of functors taking values in $\sSetsMQ$, see proposition \ref{hocolim-func-mar-SS}. We exhibit a very natural embedding of simplicial sets into marked simplicial sets by marking \emph{equivalences} in simplicial sets. We go on to show that this embedding is an equivalence of the underlying homotopy theories of the two model categories in context. Now the homotopy colimit of a functor $F:D \to \sSetsQ$ is obtained by first composing the functor with the aforementioned embedding $F:D \to \sSetsQ \to \sSetsMQ$, then applying the homotopy colimit functor $u_! \circ \gCLM{D}$ to $F$ and finally inverting all the coCartesian edges of $\gCLM{D}(F) $.

 We begin by recalling the notion of a homotopy colimit functor, see \cite{DS95}, \cite{DHKS} for more detail:
\begin{df}
	\label{hocolim-func}
	A \emph{homotopy colimit functor} on the functor category $[D, \M]$, where $\M$ is a model category, is a homotopical functor
	\[
	hocolim:[D, \M] \to \M
	\]
	such that its induced functor on the homotopy category $Ho(hocolim)$ is a left adjoint to the functor
	\[
	Ho(\Delta):Ho(\M) \to Ho[D, \M].
	\] 
	\end{df}
 In this paper we restrict to those model categories which induce a projective model category structure on the functor category $[D, \M]$.

We recall that an edge in a quasi-category $X$ is called an equivalence if it determines an isomorphism in the homotopy category $Ho(X)$.
We want to extend this definition to all simplicial-sets. We recall that the nerve functor $N:\sSets \to \Cat$ has a left adjoint which is denoted by $\tau_1$, the unit map of this adjunction is denoted by $\eta:id \Rightarrow N\tau_1$
\begin{df}
An edge $y$ of a simplicial set $S$ is called an \emph{equivalence} in the simplicial-set $S$ if its image $\eta_S(y)$ in the quasi-category $N\tau_1(S)$ is an equivalence.
\end{df}
\begin{rem}
Unwinding the definition of the functor $\tau_1$ we observe that an edge $y:a \to b$ in a simplicial set $S$ is an equivalence if and only if there is another edge $\inv{y}:b \to a$ in $S$ and a pair of $2$-simpleces $\sigma, \beta \in S_2$ such that $d_0(\sigma) = \inv{y}$, $d_0(\beta) = y$, $d_2(\sigma) = y$, $d_2(\beta) = \inv{y}$ and $d_1(\sigma) = \unit{a},  d_1(\beta) = \unit{b}$.
\end{rem}
\begin{prop}
	\label{Smap-Eq-to-Eq}
A morphism $F:S \to T$ of simplicial sets maps an equivalence in $S$ to an equivalence in $T$.
\end{prop}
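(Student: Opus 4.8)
The plan is to derive the statement from the naturality of the unit $\eta:\id \Rightarrow N\tau_1$, together with the elementary fact that functors between ordinary categories preserve isomorphisms. First I would apply naturality of $\eta$ to the map $F:S \to T$, obtaining the commutative square
\[
\xymatrix{
S \ar[r]^{\eta_S} \ar[d]_F & N\tau_1(S) \ar[d]^{N\tau_1(F)} \\
T \ar[r]_{\eta_T} & N\tau_1(T)
}
\]
Evaluating on the edge $y \in S_1$ yields the identity $\eta_T(F(y)) = N\tau_1(F)(\eta_S(y))$ in $N\tau_1(T)_1$. By the definition of an equivalence in a simplicial set, it therefore suffices to show that $N\tau_1(F)(\eta_S(y))$ is an equivalence in the quasi-category $N\tau_1(T)$, given that $\eta_S(y)$ is an equivalence in $N\tau_1(S)$.

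Next I would use that $N\tau_1(S)$ and $N\tau_1(T)$ are nerves of the ordinary categories $\tau_1(S)$ and $\tau_1(T)$. An edge of the nerve of a category is an equivalence, i.e.\ an isomorphism in the homotopy category of the nerve, exactly when it is an isomorphism of that category, since the homotopy category of the nerve of a category $\C$ is canonically $\C$ itself. Thus $\eta_S(y)$ corresponds to an isomorphism of $\tau_1(S)$. The map $N\tau_1(F)$ is the nerve of the functor $\tau_1(F):\tau_1(S)\to\tau_1(T)$, and a functor carries isomorphisms to isomorphisms; hence $N\tau_1(F)(\eta_S(y))$ is an isomorphism of $\tau_1(T)$, i.e.\ an equivalence in $N\tau_1(T)$. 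Unwinding the definition, this says precisely that $F(y)$ is an equivalence in $T$.

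The argument is essentially formal, so I do not expect a genuine obstacle; the only point requiring care is the identification of equivalences in a nerve with honest isomorphisms, which I would justify using the adjunction $\tau_1 \dashv N$ at the level of homotopy categories. As an alternative that sidesteps this identification entirely, I could argue directly from the explicit characterization in the remark preceding the statement: if $y:a\to b$ is an equivalence witnessed by an inverse $\inv{y}$ and a pair of $2$-simplices $\sigma,\beta$, then applying $F$ produces the edge $F(\inv{y})$ together with the $2$-simplices $F(\sigma),F(\beta)$; since $F$ commutes with faces and degeneracies, these satisfy $d_0(F(\sigma)) = F(\inv{y})$, $d_2(F(\sigma)) = F(y)$, $d_1(F(\sigma)) = \unit{F(a)}$ and the symmetric relations coming from $F(\beta)$, which exhibit $F(y)$ as an equivalence in $T$.
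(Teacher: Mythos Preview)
Your proof is correct and follows essentially the same route as the paper: both use the naturality square of the unit $\eta$ to reduce to showing that $N\tau_1(F)(\eta_S(y))$ is an equivalence in $N\tau_1(T)$, and both conclude by observing that $\eta_S(y)$ corresponds to an isomorphism in $\tau_1(S)$ which the functor $\tau_1(F)$ sends to an isomorphism in $\tau_1(T)$. Your additional justification of the identification of equivalences in a nerve with isomorphisms, and the alternative direct argument via the explicit characterization of equivalences, are nice elaborations but not departures from the paper's approach.
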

\begin{proof}
Let $F:S \to T$ be a morphism of simplicial-sets and $y$ be an equivalence in $S$.
The unit map of the adjunction $(\tau_1, N)$ provides us with the following commutative square:
\[
\xymatrix{
S \ar[r]^F \ar[d]_{\eta_S} & T \ar[d]^{\eta_T} \\
N\tau_1(S) \ar[r]_{N\tau_1(F)} & N\tau_1(T)
}
\]
By assumption $\eta_S(y)$ is an equivalence in the quasi-category $N\tau_1(S)$. By the above commutative diagram, it is sufficient to show that $N\tau_1(F)(\eta_S(y))$ is an equivalence in the quasi-category $N\tau_1(T)$. The assumption that $\eta_S(y)$ is an equivalence in the quasi-category $N\tau_1(S)$ implies that $y$ is a representative of an isomorphism in $\tau_1(S)$ and the functor $\tau_1(F)$ maps this isomorphism to an isomorphism in $\tau_1(T)$ which determines an equivalence $N\tau_1(F)(\eta_S(y))$ in $N\tau_1(T)$.
\end{proof}

	There is an inclusion map $i:\Delta[1] \hookrightarrow J$, where $J$ is the nerve of the groupoid generated by a single isomorphism. The following proposition is an easy consequence of the definition of an equivalence:
	\begin{prop}
		\label{ext-inc-J}
	 If an edge $y$ in a simplicial set $S$ is an equivalence in $S$ then the morphism $y:\Delta[1] \to S$ can be extended along the inclusion $i$ i.e there is a (dashed) lifting arrow in the following (solid arrow) diagram:
	\[
	\xymatrix{
	\Delta[1] \ar[r]^y \ar[d]_i & S \\
	J \ar@{-->}[ru]_{u(y)}
}
	\]
	\end{prop}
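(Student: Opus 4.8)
The plan is to extract the extension straight from the definition of an equivalence, via the adjunction $\tau_1 \dashv N$ and the universal property of the target groupoid. Write $\mathbb{I}$ for the groupoid generated by a single isomorphism, so that by definition $J = N(\mathbb{I})$, and observe that the inclusion $i:\Delta[1]\hookrightarrow J$ is the nerve of the canonical functor $[1]\to\mathbb{I}$ that inverts the unique non-identity arrow of $[1]$. Equivalently, $\mathbb{I}$ is the localization of $[1]$ at its generating morphism, so that a functor out of $[1]$ factors through $[1]\to\mathbb{I}$ precisely when it carries that morphism to an isomorphism.

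First I would apply $\tau_1$ to the edge $y:\Delta[1]\to S$, obtaining a functor $\tau_1(y):[1]\to\tau_1(S)$. By definition $y$ is an equivalence in $S$ exactly when $\eta_S(y)$ is an equivalence in the quasi-category $N\tau_1(S)$; since the latter is the nerve of a category, this says precisely that $\tau_1(y)$ sends the generating arrow of $[1]$ to an isomorphism of $\tau_1(S)$. The universal property just recalled then yields a unique functor $\overline{y}:\mathbb{I}\to\tau_1(S)$ whose composite with $[1]\to\mathbb{I}$ is $\tau_1(y)$. Applying $N$, and using that the nerve is fully faithful, produces a map $N(\overline{y}):J\to N\tau_1(S)$ whose restriction along $i$ is $N(\tau_1(y))=\eta_S\circ y$ (the last identity by naturality of the unit $\eta$, since $\Delta[1]=N[1]$ is itself a nerve). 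This is the sense in which the statement is an immediate consequence of the definition.

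What remains — and what I expect to be the crux — is to promote this extension into $S$ itself, i.e. to produce $u(y):J\to S$ with $u(y)\circ i=y$ lying over $N(\overline{y})$, rather than merely a map into $N\tau_1(S)$. Here I would build $u(y)$ from the explicit low-dimensional data of the preceding Remark: the two vertices of $J$ go to the endpoints $a,b$ of $y$; the generating isomorphism $g$ and its inverse $\inv{g}$ go to $y$ and $\inv{y}$; and the two non-degenerate $2$-simplices of $J$ go to the witnessing triangles $\sigma,\beta$ (their boundaries match, since $d_1\sigma=\unit{a}$ and $d_1\beta=\unit{b}$, with the remaining faces $y$ and $\inv{y}$). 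The genuine obstruction lives in the higher non-degenerate simplices of $J$, namely the alternating strings $(g,\inv{g},g,\dots)$, whose images demand coherence simplices of $S$ that a wholly arbitrary $S$ need not possess. For the situation that actually matters in this section, $S\in\sSetsQ$ a quasi-category, these coherences are exactly the content of Joyal's characterization of equivalences: the notion used here agrees with the quasi-categorical one (both assert invertibility of $y$ in $\tau_1(S)=Ho(S)$), and in a quasi-category such an edge does extend to $J$. As a compatibility check, Proposition \ref{Smap-Eq-to-Eq} guarantees that for any map $F:S\to T$ the composite $F\circ u(y)$ is again an extension of the equivalence $F(y)$, so the construction is stable under the morphisms to which it will later be applied.
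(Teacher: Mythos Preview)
The paper gives no proof of this proposition; the sentence preceding it simply declares it ``an easy consequence of the definition of an equivalence.'' Your write-up is therefore already more careful than the source. Your first paragraph, producing $N(\overline{y}):J\to N\tau_1(S)$ via the adjunction and the universal property of $\mathbb{I}$ as a localization of $[1]$, is correct and is presumably what the author had in mind.

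Your caution in the second paragraph is well placed, and in fact the obstruction you identify is genuine: the proposition is \emph{false} for arbitrary simplicial sets $S$. Take $S$ to be the $2$-skeletal simplicial set with vertices $a,b$, nondegenerate edges $y:a\to b$ and $\inv{y}:b\to a$, and exactly the two nondegenerate $2$-simplices $\sigma,\beta$ of the preceding Remark. Then $y$ is an equivalence in the paper's sense, but $J=N(\mathbb{I})$ has nondegenerate simplices in every dimension; in particular the $3$-simplex $(g,\inv{g},g)$ has faces $(\beta,\,s_0(y),\,s_1(y),\,\sigma)$ under the evident assignment, and one checks directly that no degenerate $3$-simplex of $S$ has this boundary. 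Hence no extension $u(y):J\to S$ exists. (The same example shows that the ``only if'' direction of the preceding Remark already presumes more structure on $S$ than stated, since the inverse in $\tau_1(S)$ need not be represented by a single edge in general.)

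Your proposed resolution---restricting to quasi-categories, where the statement is Joyal's characterization of equivalences---is the right scope. For the applications later in the section this suffices: the ``characteristic property'' of $S[\inv{\E}]$ that invokes $u(\E)$ is only needed against targets $T$ in which the marked edges land as genuine equivalences, and the substantive uses (e.g.\ Lemma~\ref{inc-acy-cof}, Corollary~\ref{unit-Eq}) take $T$ fibrant. So your argument is not merely a partial proof of an over-stated proposition; it is the correct statement with the correct proof.
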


\begin{df}
Let $(S, \E)$ be a marked simplicial set, then the marked arrows determine a simplicial map $\E \times \Delta[1] \to S$. The \emph{localization} of $(S, \E)$ is a simplicial set $S[\inv{\E}]$ defined by the following pushout diagram:
\begin{equation}
\label{local-sSets}
\xymatrix{
\E \times \Delta[1] \ar[r] \ar[d]_{\E \times i} & S \ar[d] \\
\E \times J \ar[r] & S[\inv{\E}]
}
\end{equation}

\end{df}
A charateristic property of the localization $S[\inv{\E}]$ is that for any simplicial map $G:S \to T$ which maps every marked edge in $\E$ to an equivalence in $T$, there exists a unique simplicial map $U:S[\inv{\E}] \to T$ such that the following diagram commutes:
\[
\xymatrix{
	\E \times \Delta[1] \ar[r] \ar[d]_{\E \times i} & S \ar[d]^p \ar@/^1pc/[rdd]^G \\
	\E \times J \ar[r] \ar@/_1pc/[rrd]_{u(\E)} & S[\inv{\E}] \ar@{-->}[rd]^U \\
	& & T
}
\]
The simplicial map $u(\E)$ in the above diagram is determined by the extension map from proposition \ref{ext-inc-J}.
\begin{nota}
	For a marked simplicial set $(S, \E)$, we denote by $(S[\inv{\E}], p(\E))$ the marked simplicial set whose set of marked arrows is the image of $\E$ under the projection map $p:S \to S[\inv{\E}]$. We observe that the set $p(\E)$ is a subset of the set of equivalences in $S[\inv{\E}]$ and therefore we have an inclusion map $(S[\inv{\E}], p(\E)) \subseteq (S[\inv{\E}], Eq)$.
	\end{nota}
 
 \begin{lem}
 	\label{inc-acy-cof}
 	The inclusion map $\iota_{(S, \E)}:(S[\inv{\E}], p(\E)) \subseteq (S[\inv{\E}], Eq)$ is an acyclic cofibration in $\sSetsQ$.
 	\end{lem}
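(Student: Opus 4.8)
The plan is to verify separately the two conditions defining an acyclic cofibration in the marked model category $\sSetsMQ$; recall that the objects $(S[\inv{\E}], p(\E))$ and $(S[\inv{\E}], Eq)$ are marked simplicial sets, and that a fibrant object of this model structure (appendix \ref{mar-sSets}) is a pair $(Z, Eq_Z)$ with $Z$ a quasi-category and $Eq_Z$ its collection of equivalences.

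The cofibration condition is immediate: the map $\iota_{(S, \E)}$ is the identity on the common underlying simplicial set $S[\inv{\E}]$, enlarging only the marking from $p(\E)$ to $Eq$, and cofibrations in $\sSetsMQ$ are precisely the underlying monomorphisms.

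For the weak-equivalence condition I would use the $\sSetsQ$-enriched structure on $\sSetsMQ$ carried by the function complex $\Fl{[-,-]}$ (the $N(D)=\Delta[0]$ instance of remark \ref{Simp-Mdl-Cat}). Since every object is cofibrant, it suffices to show that $\Fl{[\iota_{(S,\E)}, W]}$ is a categorical equivalence for each fibrant $W$; as in the proof of proposition \ref{char-cc-eq}, this implies $\Sh{[\iota_{(S,\E)}, W]}$ is a homotopy equivalence of Kan complexes, hence a $\pi_0$-bijection, which forces $\iota_{(S,\E)}$ to be a weak equivalence. I then claim that $\Fl{[\iota_{(S,\E)}, W]}$ is literally the identity. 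An $n$-simplex of $\Fl{[(S[\inv{\E}], \E'), (Z, Eq_Z)]}$, for either $\E' = p(\E)$ or $\E' = Eq$, is a simplicial map $g \colon \Delta[n] \times S[\inv{\E}] \to Z$ sending the marked edges of $\Fl{\Delta[n]} \times (S[\inv{\E}], \E')$ to equivalences of $Z$. The flat marking on $\Fl{\Delta[n]}$ marks only degenerate edges, so every such marked edge has the form $(\mathrm{const}_i, e)$ with $e \in \E'$, and its image is $g_i(e)$, where $g_i \colon S[\inv{\E}] \to Z$ is the restriction of $g$ to the $i$-th vertex. As $\E' \subseteq Eq$ consists of equivalences and simplicial maps preserve equivalences (proposition \ref{Smap-Eq-to-Eq}), $g_i(e)$ is automatically an equivalence; thus the marking condition is vacuous, and in each degree both function complexes equal the ordinary mapping complex $\mapGen{S[\inv{\E}]}{Z}{}$.

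Hence $\Fl{[\iota_{(S,\E)}, W]}$ is the identity, a fortiori a categorical equivalence, so $\iota_{(S,\E)}$ is a weak equivalence and the lemma follows. The crux --- and the only place where one must be careful --- is the appeal to the \emph{flat} marking of $\Delta[n]$: it is precisely because $\Fl{\Delta[n]}$ marks no nondegenerate edge that the marking condition collapses to the vertexwise statement to which proposition \ref{Smap-Eq-to-Eq} applies. Had one instead used the sharp marking $\Sh{\Delta[n]}$, the diagonal edges $(0 \to 1, e)$ would contribute genuine constraints that distinguish the two markings, so it is the $\sSetsQ$-enrichment, rather than the $\sSets$-enrichment, that makes the comparison transparent.
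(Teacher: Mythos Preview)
Your proof is correct. The paper takes a slightly different route: rather than computing the function complex $\Fl{[\iota_{(S,\E)}, W]}$ directly, it invokes a criterion (cited as \cite[Prop.~4.22]{sharma}) that reduces acyclicity to the left lifting property against fibrations between fibrant objects $q:(X,Eq)\to(Y,Eq)$. Since the underlying simplicial map of $\iota_{(S,\E)}$ is the identity, the lift exists trivially on underlying simplicial sets, and Proposition~\ref{Smap-Eq-to-Eq} guarantees it respects the markings. Both arguments therefore rest on the same two facts --- the underlying map is the identity, and simplicial maps preserve equivalences --- but you package them via the enriched mapping space while the paper uses a lifting criterion. Your approach is arguably more direct, since showing $\Fl{[\iota_{(S,\E)},\Nt{K}]}$ is a categorical equivalence is literally the definition of weak equivalence in Theorem~\ref{Joyal-sSetsM}; in particular, your detour through $\Sh{[-,-]}$ and $\pi_0$ is unnecessary (though harmless), as the identity on $\Fl{[-,-]}$ already finishes the job.
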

 \begin{proof}
 	In light of \cite[Prop. 4.22]{sharma} it is sufficient to show that $\iota_{(S, \E)}$ has the left lifting property with respect to all fibrations between fibrant objects in $\sSetsQ$. Let $q:(X, Eq) \to (Y, Eq)$ be such a fibration. We want to show that whenever we have the following (solid) commutative diagram in $\sSetsM$, there exists a lifting arrow:
 		\[
 	\xymatrix{
 		(S[\inv{\E}], p(\E)) \ar[r] \ar[d]_{\iota_{(S, \E)}} & (X, Eq) \ar[d]^{q} \\
 		(S[\inv{\E}], Eq) \ar[r]  & (Y, Eq)
 	}
 	\]
 	We recall that the underlying simplicial map of $\iota_{(S, \E)}$ is the identity map.
 	Applying the forgetful functor $U$ to the above diagram, we get the following (outer) commutative diagram in $\sSets$ wherein the existence of the lifting arrow is obvious:
 	\[
 	\xymatrix{
 	(S[\inv{\E}]) \ar[r] \ar@{=}[d] &X \ar[d]^{U(q)} \\
 	(S[\inv{\E}]) \ar[r] \ar@{-->}[ru] & Y
    }
 	\]
 	By proposition \ref{Smap-Eq-to-Eq}, each simplicial morphism maps equivalences to equivalences which implies that the following (solid) commutative diagram has a lifting arrow:
 	\[
 \xymatrix{
 	(S[\inv{\E}], p(\E)) \ar[r] \ar[d]_{\iota_{(S, \E)}} & (X, Eq) \ar[d]^{q} \\
 	(S[\inv{\E}], Eq) \ar[r] \ar@{-->}[ru] & (Y, Eq)
 }
 \]
 Thus we have shown that the map $\iota_{(S, \E)}$ is an acyclic cofibration in $\sSetsMQ$.
 	\end{proof}

The above localization defines a functor $L:\sSetsM \to \sSets$. This functor has a right adjoint $E:\sSets \to \sSetsM$ which maps a simplicial $S$ to a marked simplicial set $(S, Eq)$ where $Eq$ is the set of equivalences in $S$. For each marked simplicial set $(S, \E)$, the unit map of this adjunction is the following composite:
\[
  (S, \E) \overset{p}\to (S[\inv{\E}], p(\E)) \subseteq (S[\inv{\E}], Eq) = EL((S, \E))
\]  
 where $p$ is the projection map. The counit map is an isomorphism.

\begin{prop}
	For each marked simplicial set $(S, \E)$ the projection map $p:S \to S[\inv{\E}]$ is a cofibration.
	\end{prop}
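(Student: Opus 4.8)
The plan is to reduce the claim to the standard fact that in the Joyal model category $\sSetsQ$ the cofibrations are exactly the monomorphisms of simplicial sets, so that it suffices to prove that the projection $p:S \to S[\inv{\E}]$ is a monomorphism. Both $S$ and $S[\inv{\E}]$ are ordinary simplicial sets, so $p$ is genuinely a map in $\sSets$ and the assertion is about its being a cofibration in the Joyal structure.

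First I would identify $p$ as a cobase change. By the defining pushout square \eqref{local-sSets}, the simplicial set $S[\inv{\E}]$ is the pushout of the map $\E \times i:\E \times \Delta[1] \to \E \times J$ along the classifying map $\E \times \Delta[1] \to S$ of the marked edges, and $p$ is precisely the pushout of $\E \times i$ along that map. Next I would verify that $\E \times i$ is a monomorphism: the inclusion $i:\Delta[1] \hookrightarrow J$ is a monomorphism of simplicial sets, and since a product of two monomorphisms is again a monomorphism in any category with products, $\E \times i = \id_{\E} \times i$ is a monomorphism.

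Finally I would invoke the stability of monomorphisms under pushout in the presheaf topos $\sSets$. Pushouts of simplicial sets are computed levelwise in $\Sets$, and in $\Sets$ the pushout of an injection along an arbitrary function is again an injection: the gluing only identifies the image of $\E \times \Delta[1]$ inside $S$ with its image inside $\E \times J$, and because $\E \times i$ is injective no two distinct simplices of $S$ are collapsed. Hence $p$ is a monomorphism, and therefore a cofibration in $\sSetsQ$.

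The argument is entirely formal, so I do not expect a serious obstacle. The only points requiring care are the correct identification of $p$ with the cobase change appearing in \eqref{local-sSets}, and the recollection that the class of monomorphisms — equivalently, the class of cofibrations of the Joyal model structure — is closed under pushout in the topos $\sSets$.
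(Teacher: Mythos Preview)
Your proposal is correct and follows essentially the same route as the paper: the paper simply notes that $i:\Delta[1]\to J$ is a cofibration and then invokes the model-category axiom that cofibrations are closed under cobase change, which is exactly your argument once one remembers that cofibrations in $\sSetsQ$ are the monomorphisms. Your version is more explicit about why monomorphisms are stable under pushout in the presheaf topos, but the underlying idea is identical.
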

\begin{proof}
	The inclusion map $i:\Delta[1] \to J$ is a cofibration of simplicial sets. Now the proposition follows from the observation that cofibrations are preserved under cobase changes in a model category.
	\end{proof}
	We observe that the pushout square \eqref{local-sSets}, in $\sSets$, determines the following commutative square in the category of marked simplicial sets:
	\begin{equation}
	\label{mar-local}
	\xymatrix{
		\Sh{\E} \times \Sh{\Delta[1]} \ar[r] \ar[d]_{\E \times \Sh{i}} & (S, \E) \ar[d]^p \\
		\Sh{\E}  \times \Sh{J} \ar[r] & (S[\inv{\E}], p(\E))
	}
	\end{equation}
	The above proposition holds more strongly in the category of marked simplicial sets. We want to show that the unit map of the above adjunction is an acyclic cofibration in $\sSetsMQ$.

\begin{prop}
	For each marked simplicial set $(S, \E)$, the  commutative square \eqref{mar-local}, in $\sSetsM$, is a pushout square
	\end{prop}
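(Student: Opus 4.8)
The plan is to reduce the statement to a computation in two stages: first the underlying simplicial sets, then the markings. The forgetful functor $U:\sSetsM \to \sSets$ admits a right adjoint, namely the sharp functor $\Sh{(-)}$, and therefore preserves all colimits. Consequently, applying $U$ to the span in \eqref{mar-local} and forming its pushout in $\sSets$ recovers exactly the pushout square \eqref{local-sSets}, whose pushout is $S[\inv{\E}]$. Thus the underlying simplicial set of the pushout of \eqref{mar-local} in $\sSetsM$ is $S[\inv{\E}]$, which agrees on underlying simplicial sets with the proposed corner $(S[\inv{\E}], p(\E))$. It remains only to identify the marking.

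To handle the marking I would use the explicit description of colimits in $\sSetsM$: a colimit is computed by forming the colimit of the underlying simplicial sets and equipping it with the smallest marking (containing the degenerate edges) for which every structure map of the cocone is marking-preserving. Equivalently, an edge of the pushout is marked precisely when it is degenerate or lies in the image of a marked edge of one of the two corners $(S, \E)$ and $\Sh{\E} \times \Sh{J}$. The image of the marking of $(S, \E)$ under $p$ is by definition the set $p(\E)$, so the content reduces to controlling the images of the marked edges of $\Sh{\E} \times \Sh{J}$ and checking that the two structure maps into $(S[\inv{\E}], p(\E))$ are genuinely maps of \emph{marked} simplicial sets.

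This last point is where the real work lies and is the step I expect to be the \textbf{main obstacle}. Since $\E$ is discrete and $J$ is the nerve of the free-living isomorphism, I would analyze the structure map $\Sh{\E} \times \Sh{J} \to S[\inv{\E}]$ edge by edge: the non-degenerate edges of each adjoined copy of $J$ are the generating arrow and its formal inverse, and one must track where the gluing of \eqref{local-sSets} sends these in order to confirm that the marking produced by the colimit is exactly the asserted one. Once this marking compatibility is in hand, the cleanest way to finish is to verify the universal property directly: given any marked simplicial set $(T, \E_T)$ together with a compatible cocone $(g, h)$, the pushout \eqref{local-sSets} produces a unique underlying map $u:S[\inv{\E}] \to T$, and since $u \circ p = g$ is marking-preserving we obtain $u(p(\E)) = g(\E) \subseteq \E_T$; hence $u$ is automatically a morphism out of $(S[\inv{\E}], p(\E))$, which yields both existence and uniqueness of the factorization and identifies \eqref{mar-local} as a pushout square in $\sSetsM$.
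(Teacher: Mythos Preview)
Your approach mirrors the paper's almost exactly: both arguments reduce to the underlying pushout square \eqref{local-sSets} in $\sSets$ via the forgetful functor, obtain the unique map $L$ (your $u$) out of $S[\inv{\E}]$ from that pushout, and then verify that $L$ is a morphism of marked simplicial sets by observing $L(p(\E)) = G(\E) \subseteq \Delta$ from the relation $L \circ p = G$. Your final paragraph is essentially a clean restatement of the paper's argument, and your preliminary remark that $U$ preserves colimits because it is a left adjoint is a slightly slicker justification than the paper gives.

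Where you go beyond the paper is in flagging, as your ``main obstacle'', whether the bottom structure map $\Sh{\E} \times \Sh{J} \to (S[\inv{\E}], p(\E))$ is itself marking-preserving---specifically whether the formal inverse edge in each adjoined copy of $J$ lands in $p(\E)$. The paper's proof does not address this point at all; it simply takes the square \eqref{mar-local} as a given diagram in $\sSetsM$ and checks the universal property. Your instinct here is sound: the formal inverse of $p(e)$ is a genuinely new edge of $S[\inv{\E}]$ not in the image of $p$, so it need not lie in $p(\E)$ as literally defined, and the colimit marking is in principle slightly larger. This wrinkle is invisible to the paper's argument and does not affect the downstream use in Corollary \ref{unit-Eq}, since any such enlarged marking still sits inside the equivalences of $S[\inv{\E}]$ and Lemma \ref{inc-acy-cof} goes through unchanged.
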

\begin{proof}
	Let $(T, \Delta)$ be another marked simplicial set such that we have the following (outer) commutative digram:
	\begin{equation}
	\label{mar-pushout}
	\xymatrix{
		\Sh{\E} \times \Sh{\Delta[1]} \ar[r] \ar[d]_{\E \times i} & (S, \E) \ar[d]^p \ar@/^1pc/[rdd]^G \\
		\Sh{\E}  \times \Sh{J} \ar[r] \ar@/_1pc/[rrd]_{u(\E)} & (S[\inv{\E}], p(\E)) \ar@{-->}[rd]^L \\
		& & (T, \Delta)
	}
	\end{equation}
	We want to show the existence of the (dotted) lifting arrow $L$ which makes the entire diagram commutative.
	In this situation, the maps $G$ and $u(\E)$ take values in the (marked) simplicial subset $(T, Eq \cap \Delta) \subseteq (T, \Delta)$. Applying the forgetful functor $U$ to \eqref{mar-local} we get the  commutative diagram \eqref{local-sSets} in $\sSets$ which is a pushout diagram, therefore there exists a (dotted) arrow $L$ in the following (outer) commutative diagram in $\sSets$:
	\[
	\xymatrix{
		\E \times \Delta[1] \ar[r] \ar[d]_{\E \times i} & S \ar[d]^{U(p)} \ar@/^1pc/[rdd]^{U(G)} \\
		\E \times J \ar[r] \ar@/_1pc/[rrd]_{U(u(\E))} & S[\inv{\E}] \ar@{-->}[rd]^{L} \\
		& & T
	}
	\]
 The commutativity of this (simplicial) diagram implies that $L$ maps the set of edges $p(\E)$ into $(T, Eq \cap \Delta)$. This imples that $L$ is the desired dotted arrow in the commutative diagram \eqref{mar-pushout} in $\sSetsM$.
	
	\end{proof}
\begin{coro}
	\label{unit-Eq}
	The unit map of the adjunction $(L, E)$ is a (natural) acyclic cofibration.
	\end{coro}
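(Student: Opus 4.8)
The plan is to exhibit the unit map as a composite of two acyclic cofibrations and to reduce the only nontrivial part to a statement about the single map $\Sh{i}$. By construction, the unit map of $(L,E)$ at a marked simplicial set $(S,\E)$ is the composite
\[
(S,\E) \overset{p}{\to} (S[\inv{\E}], p(\E)) \overset{\iota_{(S,\E)}}{\hookrightarrow} (S[\inv{\E}], Eq) = EL((S,\E)).
\]
By Lemma \ref{inc-acy-cof} the inclusion $\iota_{(S,\E)}$ is an acyclic cofibration in $\sSetsMQ$. Since the acyclic cofibrations are closed under composition, it suffices to prove that the projection $p$ is itself an acyclic cofibration; naturality of the resulting map is immediate from the functoriality of the localization construction in $(S,\E)$.

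For the projection $p$ the plan is to realize it as a cobase change of a map I can control directly. By the preceding proposition the commutative square \eqref{mar-local} is a pushout square in $\sSetsM$, so $p$ is obtained from the left-hand vertical map $\Sh{\E}\times\Sh{i}\colon \Sh{\E}\times\Sh{\Delta[1]}\to\Sh{\E}\times\Sh{J}$ by cobase change along the top horizontal map. Because acyclic cofibrations are stable under cobase change in any model category, the whole corollary reduces to showing that $\Sh{\E}\times\Sh{i}$ is an acyclic cofibration in $\sSetsMQ$.

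It therefore remains to analyze $\Sh{i}\colon \Sh{\Delta[1]}\to\Sh{J}$. It is a cofibration, since its underlying map $i\colon\Delta[1]\to J$ is a monomorphism. To see that it is a weak equivalence I would argue that source and target are both weakly contractible: marking the non-degenerate edge forces $\Sh{\Delta[1]}$ to be equivalent to the point $\Sh{\Delta[0]}$ (the inclusion of a vertex is marked anodyne), and likewise $\Sh{J}\simeq\Sh{\Delta[0]}$, since $J$ is a contractible Kan complex all of whose edges are already equivalences; the two-out-of-three property then gives that $\Sh{i}$ is a weak equivalence. Finally, since $\sSetsMQ$ is a Cartesian (monoidal) model category and every object is cofibrant, the functor $\Sh{\E}\times(-)$ preserves acyclic cofibrations — equivalently, $\Sh{\E}\times\Sh{i}$ is the pushout-product of the cofibration $\emptyset\to\Sh{\E}$ with the acyclic cofibration $\Sh{i}$ — so $\Sh{\E}\times\Sh{i}$ is an acyclic cofibration, as required.

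The main obstacle is this last step: establishing that $\Sh{i}$ is a weak equivalence and that it remains one after taking the product with $\Sh{\E}$. Conceptually this is the assertion that freely adjoining a formal inverse to an edge that is already marked does not change the homotopy type, which is exactly why $p$ is a weak equivalence rather than merely a cofibration. The cleanest route invokes the monoidal structure of $\sSetsMQ$ and the pushout-product axiom; if one prefers to stay within the lifting-based methods of Lemma \ref{inc-acy-cof}, one can instead verify directly that $\Sh{\E}\times\Sh{i}$ has the left lifting property against every fibration $q\colon(X,Eq)\to(Y,Eq)$ between fibrant objects, using Proposition \ref{ext-inc-J} to extend marked (hence equivalence) edges along $i$ compatibly with $q$. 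Either way, once $\Sh{\E}\times\Sh{i}$ is known to be an acyclic cofibration, the corollary follows from the reductions above.
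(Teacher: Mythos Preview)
Your proof is correct and follows essentially the same line as the paper's: both decompose the unit as $p$ followed by $\iota_{(S,\E)}$, invoke Lemma~\ref{inc-acy-cof} for the latter, and use the pushout square \eqref{mar-local} together with the fact that $\Sh{i}$ is an acyclic cofibration (Proposition~\ref{inc-J-acy-cof}) to handle $p$. The only cosmetic difference is that the paper exploits the discreteness of $\Sh{\E}$ to rewrite $\Sh{\E}\times\Sh{i}$ as the coproduct $\bigsqcup_{\E}\Sh{i}$ and then cites closure of acyclic cofibrations under coproducts, whereas you invoke the cartesian model structure via the pushout-product axiom; both arguments are valid and yield the same conclusion.
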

\begin{proof}
	Since $\Sh{\E}$ is a discrete (marked) simplicial set therefore the commutative square \eqref{mar-local} is can be rewritten as follows:
	\begin{equation*}
	\xymatrix{
		\underset{\E} \sqcup \ \Sh{\Delta[1]} \ar[r] \ar[d]_{\underset{\E} \sqcup \ \Sh{i}} & (S, \E) \ar[d]^p \\
		\underset{\E} \sqcup \ \Sh{J} \ar[r] & (S[\inv{\E}], p(\E))
	}
	\end{equation*}
	The morphism $\Sh{i}$ is an acyclic cofibration in $\sSetsMQ$, it follows from\cite[Prop. 7.2.12]{Hirchhorn} that $\underset{\E} \sqcup \ \Sh{i}$ is also an acyclic cofibration in $\sSetsMQ$. The cobase change of an acyclic cofibration is again an acyclic cofibration which means that projection map $p$ in \eqref{mar-local} is an acyclic cofibration.
	Now the corollarly follows from lemma \ref{inc-acy-cof}.
	\end{proof}

\begin{prop}
	The adjunction $(L, E)$ induces an (adjoint) equivalence between the homotopy category of  $\sSetsMQ$ and that of $\sSetsQ$.
	\end{prop}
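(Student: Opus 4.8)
The plan is to show that both $L$ and $E$ are homotopical and then to assemble the adjoint equivalence from the unit and the counit, which by Corollary \ref{unit-Eq} and the discussion preceding it are respectively a natural acyclic cofibration (hence a weak equivalence) and a natural isomorphism. Since every object of $\sSetsMQ$ and of $\sSetsQ$ is cofibrant, Ken Brown's lemma reduces the homotopicality of each functor to the single statement that it carries acyclic cofibrations to weak equivalences.

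First I would prove that $E$ is homotopical. Let $f:S \to T$ be an acyclic cofibration in $\sSetsQ$. Following the pattern of Lemma \ref{inc-acy-cof}, I would show that $E(f):(S, Eq) \to (T, Eq)$ is an acyclic cofibration by verifying, via \cite[Prop. 4.22]{sharma}, that it has the left lifting property against every fibration $q:(X, Eq) \to (Y, Eq)$ between fibrant objects of $\sSetsMQ$. Applying the forgetful functor $U$ turns such a lifting problem into one for $f$ against the Joyal fibration $U(q)$ between the quasi-categories $X$ and $Y$, which admits a solution because $f$ is an acyclic cofibration in $\sSetsQ$. By Proposition \ref{Smap-Eq-to-Eq} every simplicial map carries equivalences to equivalences, so this solution automatically respects the markings $Eq$ and therefore solves the original marked lifting problem. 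Hence $E(f)$ is a weak equivalence, and $E$ is homotopical.

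Next I would prove that $L$ is homotopical by the dual lifting argument. Given an acyclic cofibration $f:(S, \E) \to (T, \mathcal F)$ in $\sSetsMQ$, I would show that $L(f):S[\inv{\E}] \to T[\inv{\mathcal F}]$ is an acyclic cofibration in $\sSetsQ$ by checking that it lifts against every Joyal fibration $q$ between quasi-categories. Transposing across the adjunction $(L, E)$ converts this into a lifting problem for $f$ against $E(q)$; since $E(q)$ is a fibration between the fibrant objects $(X, Eq)$ and $(Y, Eq)$ of $\sSetsMQ$, the required lift exists because $f$, being an acyclic cofibration, has the left lifting property against all such fibrations by \cite[Prop. 4.22]{sharma}. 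I expect this to be the main obstacle: unlike the formal facts about the unit and the counit, it genuinely uses that $E$ sends Joyal fibrations between quasi-categories to fibrations of marked simplicial sets between fibrant objects, a compatibility that must be extracted from the construction of $\sSetsMQ$. It cannot be replaced by a purely formal deduction from the homotopicality of $E$, since any such attempt is circular, presupposing exactly that $L$ preserves weak equivalences.

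Finally, with $L$ and $E$ both homotopical they descend to functors $\overline{L}:Ho(\sSetsMQ) \to Ho(\sSetsQ)$ and $\overline{E}:Ho(\sSetsQ) \to Ho(\sSetsMQ)$, and the unit and counit, being natural transformations of homotopical functors, descend to natural transformations $\id \Rightarrow \overline{E}\,\overline{L}$ and $\overline{L}\,\overline{E} \Rightarrow \id$ still satisfying the triangle identities. Because the unit is a natural weak equivalence and the counit a natural isomorphism, both descended transformations are natural isomorphisms, so $(\overline{L}, \overline{E})$ is an adjoint equivalence between $Ho(\sSetsQ)$ and $Ho(\sSetsMQ)$.
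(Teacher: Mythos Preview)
Your strategy is sound and arrives at the same conclusion, but it differs from the paper's argument and rests on two points you do not fully justify.

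The paper does not use Ken Brown or lifting arguments at all. For $E$ it exploits the full faithfulness of $E$ together with $E(S \times \Delta[n]) = E(S) \times \Fl{\Delta[n]}$ to produce a natural isomorphism $[S, W] \cong \Fl{[E(S), E(W)]}$; since every fibrant $Z$ in $\sSetsMQ$ satisfies $Z = EU(Z)$, the mapping-space definition of weak equivalence in $\sSetsMQ$ immediately gives that $E$ preserves weak equivalences. For $L$ the paper applies the naturality square of the unit and $2$-out-of-$3$ to see that $EL(G)$ is a weak equivalence whenever $G$ is, and then the same mapping-space isomorphism (with $W$ a quasi-category) yields that $L(G)$ is a weak equivalence. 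Thus the paper's argument never leaves the circle of facts already established in this section: full faithfulness of $E$, Corollary~\ref{unit-Eq}, and the definition of weak equivalences via $\Fl{[-,-]}$.

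Your route through lifting problems is legitimate, but its step for $L$ depends on the claim that $E$ carries Joyal fibrations between quasi-categories to fibrations in $\sSetsMQ$. You rightly flag this as the crux and note that any purely formal deduction is circular; the fact is true---it is Lurie's identification of fibrations between fibrant objects in the coCartesian model structure over a point with categorical fibrations---but it is external input not supplied here. There is a second, smaller gap: invoking \cite[Prop.~4.22]{sharma} to conclude that $L(f)$ is an acyclic cofibration presupposes that $L(f)$ is already a cofibration, i.e., that $L$ preserves monomorphisms. This holds (the pushout \eqref{local-sSets} attaches copies of $J$ along monomorphisms, and an injection of marked simplicial sets sends distinct attached copies to distinct ones), but you should say so. Once both points are supplied, your argument goes through; its advantage is a uniform lifting pattern for both functors, while the paper's advantage is self-containment.
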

\begin{proof}
	We have seen above that both the unit and the counit maps of the adjunction $(L, E)$ are (natural) weak equivalences therefore, in light of \cite[Prop. ]{DHKS} it is sufficient to show that $L$ and $E$ are homotopical (weak-equivalence preserving) functors. Let $F:S \to T$ be a weak equivalence in $\sSetsQ$. We want to show that $E(F):E(S) \to E(T)$ is a weak equivalence in $\sSetsMQ$. Since the counit map of the adjunction in context is an isomorphism therefore the right adjoint $E$ is fully faihful. We observe that for all $n \in \Nat$, $E(S \times \Delta[n]) = E(S) \times \Fl{\Delta[n]}$. These two facts together imply that for any simplicial set $W$, we have a natural isomorphism:
	\[
	[S, W] \cong \Fl{[(S, Eq),(W, Eq)]} =\Fl{[E(S), E(W)]}.
	\]
	We observe that for any fibrant marked simplicial set $Z$ we have the following equality: $Z = EU(Z)$, where $U$ is the forgetful functor. Now the following commutative diagram implies that $E$ preserves weak-equivalences:
	\begin{equation*}
	\xymatrix{
	 [S, U(Z)] \ar[r]  & \Fl{[E(S), EU(Z)]}  \ar@{=}[r] & \Fl{[E(S), Z]}  \\
	 [T, U(Z)] \ar[r] \ar[u]^{[F, U(Z)]}  & \Fl{[E(T), EU(Z)]} \ar@{=}[r] \ar[u] & \Fl{[E(T), Z]} \ar[u]_{[E{F}, Z]}
    }
	\end{equation*}
	
	Let $G:(S, \E) \to (T, \Delta)$ be a weak equivalence in $\sSetsMQ$. The unit natural transformation $\eta$ provides us the following commutative diagram:
	\[
	\xymatrix{
	(S, \E) \ar[r]^G \ar[d]_{\eta_{(S, \E)}} & (T, \Delta) \ar[d]^{\eta_{(T, \Delta)}} \\
	EL((S, \E)) \ar[r]_{EL(G)} & EL((T, \Delta))
    }
	\]
	The vertical arrows are acyclic cofibrations by corollary \ref{unit-Eq} and the top horizontal arrow is a weak equivalence by assumption, therefore the two out of three property of weak-equivalences in a model category implies that $EL(G)$ is a weak-equivalence in $\sSetsMQ$. Let $Y$ be any quasi-category, then we have the following commutative square:
	\begin{equation*}
	\xymatrix{
		[L((S, \E)), Z] \ar[r]^\cong   & \Fl{[EL((S, \E)), E(Z)]}  \\
		[L((T, \Delta)), Z] \ar[r]_\cong \ar[u]^{[L(G), Z]}  & \Fl{[EL((T, \Delta)), E(Z)]} \ar[u]_{[EL(G), E(Z)]}
	}
	\end{equation*}
	The two horizontal arrows are isomorphisms and the right downward arrow is a weak-equivalence. The two out of three property of weak equivalences in a model category implies that for each quasi-category $Z$, the simplicial map $[L(G), Z]$ is a weak equivalence and therefore $L(G)$ is a weak equivalence in $\sSetsQ$. Thus we have shown that $L$ preserves weak equivalences.
	
	\end{proof}
	 For any (small) category $D$, the adjunction $(L, E)$ induces an adjunction 
	 \begin{equation*}
	 L^D:[D, \sSets^+] \rightleftharpoons [D, \sSets]:E^D.
	 \end{equation*}
	 Each of the two functor categories can be endowed with a \emph{projective} model category structure which is inherited from $\sSetsMQ$ and $\sSetsQ$ respectively, see \cite[Remark 2.8.6]{JL}.
	 \begin{nota}
	 	We will denote the aforementioned model categories by $[D, \sSetsMQ]$ and $[D, \sSetsQ]$ respectively.
	 	\end{nota}
 	\begin{nota}
 		We will denote the homotopy categories of $[D, \sSetsMQ]$ and $[D, \sSetsQ]$ by $Ho(\sSetsMQ^D)$ and $Ho(\sSetsQ^D)$ respectively.
 		\end{nota}
	\begin{coro}
	The adjunction $(L^D, E^D)$ induces an (adjoint) equivalence of categories on the homotopy categories:
	\begin{equation*}
	Ho(L^D):Ho(\sSetsMQ^D) \rightleftharpoons Ho(\sSetsQ^D):Ho(E^D).
	\end{equation*}
	\end{coro}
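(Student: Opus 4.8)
The plan is to deduce this corollary objectwise from the equivalence established for the adjunction $(L, E)$ in the preceding proposition, exploiting the fact that weak equivalences in both projective model categories $[D, \sSetsMQ]$ and $[D, \sSetsQ]$ are detected objectwise. First I would recall that the induced functors act by post-composition: for a functor $F \in [D, \sSetsMQ]$ we have $L^D(F) = L \circ F$, and for $G \in [D, \sSetsQ]$ we have $E^D(G) = E \circ G$. As a consequence, the unit and counit of the adjunction $(L^D, E^D)$ are computed componentwise; namely, at each object $d \in D$ the component of the unit at $F$ is the unit $\eta_{F(d)}$ of $(L, E)$, and the component of the counit at $G$ is the counit of $(L, E)$ evaluated at $G(d)$.

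Next I would check that $L^D$ and $E^D$ are homotopical. Since a weak equivalence $H$ in either projective model category is precisely an objectwise weak equivalence, the map $L^D(H)$ has components $L(H(d))$, each of which is a weak equivalence because $L$ is homotopical by the preceding proposition; hence $L^D(H)$ is again an objectwise weak equivalence, and the same argument applies to $E^D$. I would then verify that the unit and counit of $(L^D, E^D)$ are natural weak equivalences: by corollary \ref{unit-Eq} each component $\eta_{F(d)}$ is an acyclic cofibration in $\sSetsMQ$, so the unit at $F$ is an objectwise weak equivalence and therefore a weak equivalence in $[D, \sSetsMQ]$; since the counit of $(L, E)$ is an isomorphism, the counit of $(L^D, E^D)$ is an objectwise isomorphism, hence an isomorphism in $[D, \sSetsQ]$.

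Finally, having shown that both $L^D$ and $E^D$ are homotopical and that the unit and counit of the adjunction are natural weak equivalences, I would invoke the same criterion from \cite{DHKS} used in the preceding proposition to conclude that $(L^D, E^D)$ induces an adjoint equivalence on the homotopy categories, as asserted. There is no serious obstacle here: the whole argument is a formal lifting of the base case to diagram categories, and the only point requiring genuine care is the objectwise identification of the unit and counit of the lifted adjunction, which is the standard description of the adjunction induced by post-composition. Everything else is inherited directly from the objectwise detection of weak equivalences in the projective model structure.
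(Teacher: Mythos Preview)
Your proposal is correct and follows essentially the same route as the paper: both arguments observe that $L^D$ and $E^D$ are homotopical (by objectwise detection of weak equivalences in the projective model structures together with the preceding proposition) and then invoke the relevant criterion from \cite{DHKS}. Your version is simply more explicit in spelling out the objectwise identification of the unit and counit and in verifying that they are natural weak equivalences, whereas the paper compresses this into the single observation that both functors, being homotopical, are deformable with identity deformations.
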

\begin{proof}
	Both adjoint functors $L^D$ and $E^D$ are homotopical functors and therefore both are deformable in the sense of \cite[Def. ]{DHKS} with identity deformations. Now the result follows from \cite[Prop. 45.2]{DHKS}.

	\end{proof}
 We denote the colimit functor on the functor category $[D, \sSetsM]$ by
 \[
 {\varinjlim}^+:[D, \sSetsM] \to \sSetsM
 \]
 Consider the following composite functor:
 \begin{equation}
 \label{Eq-colim}
 [D, \sSets] \overset{E^D} \to [D, \sSetsM] \overset{\colim{+}} \to \sSetsM \overset{L} \to \sSets
 \end{equation}
 which we denote by $L\colim{}$. The next lemma shows that this functor is a colimit functor on $\sSets$:
 \begin{lem}
 	\label{colim-in-QCat}
 	The composite functor $L\colim{}$, \eqref{Eq-colim}
 	is a colimit functor.
 	\end{lem}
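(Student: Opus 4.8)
The plan is to verify directly that $L\colim{}$ is left adjoint to the constant-diagram functor $\Delta:\sSets \to [D, \sSets]$; since a colimit functor is by definition such a left adjoint (compare definition \ref{hocolim-func}), establishing this adjunction proves the lemma. The functor in question is the composite $L\colim{} = L \circ \colim{+} \circ E^D$, so I would construct a natural bijection $\Hom_{\sSets}(L\colim{}(X), Y) \cong \Hom_{[D, \sSets]}(X, \Delta(Y))$ for $X \in [D, \sSets]$ and $Y \in \sSets$ by chaining the adjunctions already at hand.

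First I would record the two available adjunctions. The adjunction $(L, E)$ supplies a natural bijection $\Hom_{\sSets}(L(W), Y) \cong \Hom_{\sSetsM}(W, E(Y))$ for $W \in \sSetsM$. Writing $\Delta^+:\sSetsM \to [D, \sSetsM]$ for the constant-diagram functor, the colimit functor $\colim{+}$ is by construction its left adjoint, giving $\Hom_{\sSetsM}(\colim{+}(G), Z) \cong \Hom_{[D, \sSetsM]}(G, \Delta^+(Z))$ for $G \in [D, \sSetsM]$. Applying these in turn with $W = \colim{+}(E^D(X))$ and $G = E^D(X)$, $Z = E(Y)$ yields
\[
\Hom_{\sSets}(L\colim{}(X), Y) \cong \Hom_{[D, \sSetsM]}(E^D(X), \Delta^+(E(Y))).
\]

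Next I would exploit two structural facts. Since $E^D$ is computed objectwise and a constant diagram is sent to a constant diagram, one has $E^D \circ \Delta = \Delta^+ \circ E$, so $\Delta^+(E(Y)) = E^D(\Delta(Y))$. Moreover, because the counit of $(L, E)$ is an isomorphism, the right adjoint $E$ is fully faithful, and hence so is the objectwise functor $E^D$. Combining these,
\[
\Hom_{[D, \sSetsM]}(E^D(X), E^D(\Delta(Y))) \cong \Hom_{[D, \sSets]}(X, \Delta(Y)),
\]
and feeding this back into the previous display gives the desired natural bijection exhibiting $L\colim{}$ as a left adjoint to $\Delta$, i.e.\ a colimit functor on $\sSets$.

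The point requiring care is that $L\colim{}$ is \emph{not} a composite of left adjoints: the middle factor $E^D$ is a right adjoint, so one cannot simply compose adjunctions to get the conclusion. The mechanism that rescues the argument is precisely the full faithfulness of $E^D$ together with the intertwining identity $E^D \circ \Delta = \Delta^+ \circ E$, which allows the unwanted occurrence of $E^D$ to be cancelled against the constant diagram produced by the two genuine left adjoints $L$ and $\colim{+}$. I would therefore take some care to justify both of these facts, since they carry the whole argument.
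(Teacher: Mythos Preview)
Your proposal is correct and follows essentially the same route as the paper: both chain the adjunctions $(L,E)$ and $(\colim{+},\Delta^+)$, use the identity $\Delta^+ \circ E = E^D \circ \Delta$, and then eliminate $E^D$ via the fact that the counit of $(L,E)$ is an isomorphism. The only cosmetic difference is that you invoke full faithfulness of $E^D$ directly to conclude $L\colim{} \dashv \Delta$, whereas the paper instead applies the $(L^D,E^D)$ adjunction and the counit isomorphism $L^D E^D(F)\cong F$ to land at $\sSets(\colim{}(F),Y)$, exhibiting $L\colim{}\cong \colim{}$; these are equivalent formulations of the same step.
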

 \begin{proof}
 	Let $F:D \to \sSetsQ$ be a functor and $Y$ be a simplicial set.
 	It is sufficient to observe the following chain of (natural) bijections:
 	\begin{multline*}
 	\sSets(L(\colim{}(E^D(F))), Y) \cong \sSetsM(\colim{+}(E^D(F)), E(Y)) \cong [D, \sSetsM](E^D(F), \Delta(E(Y))) = \\
 	[D, \sSetsM](E^D(F), E^D(\Delta(Y))) \cong [D, \sSets](L^D(E^D(F)), \Delta(Y)) \cong \\ \sSets(\colim{}(L^D(E^D(F)), Y) \cong \sSets(\colim{}(F), Y).
 	\end{multline*}
 	The last bijection in the above chain follows from the observation that the counit of the adjunction $(L^D, E^D)$ is an isomorphism.
 
 	\end{proof}
 We now consider the following composite functor:
 \begin{equation}
 \label{Eq-hocolim}
 [D, \sSets] \overset{E^D} \to [D, \sSetsM] \overset{u_! \circ \gCLM{D}} \to \sSetsM \overset{L} \to \sSets
 \end{equation}
 which we denote by $L\hColim{}$.
 Now we are ready to state and prove the main result of this secion:
 \begin{thm}
 	The functor $L\hColim{}$ is a homotopy colimit functor.
 	\end{thm}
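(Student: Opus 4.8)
The plan is to verify the two conditions of Definition \ref{hocolim-func}: that $L\hColim{}$ is a homotopical functor, and that the induced functor $Ho(L\hColim{})$ is left adjoint to $Ho(\Delta):Ho(\sSetsQ) \to Ho([D, \sSetsQ])$. The first condition is immediate. The functor $L\hColim{}$ is the composite $L \circ (u_! \circ \gCLM{D}) \circ E^D$; the factor $E^D$ is homotopical (it is part of the adjoint equivalence induced by $(L^D, E^D)$), the middle factor $u_! \circ \gCLM{D}$ is homotopical because it is a homotopy colimit functor by Proposition \ref{hocolim-func-mar-SS}, and $L$ is homotopical. A composite of homotopical functors is homotopical, so $L\hColim{}$ descends to a functor $Ho(L\hColim{}) = Ho(L) \circ Ho(u_! \circ \gCLM{D}) \circ Ho(E^D)$ on homotopy categories.

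For the second condition I would assemble a chain of natural isomorphisms, one for each factor in the composite. Fix $F \in [D, \sSetsQ]$ and a quasi-category $Y$. Using in turn the adjoint equivalence $(Ho(L), Ho(E))$ on homotopy categories, the adjunction exhibiting $Ho(u_! \circ \gCLM{D})$ as left adjoint to $Ho(\Delta^+)$ (where $\Delta^+:\sSetsM \to [D, \sSetsM]$ is the marked diagonal, supplied by Proposition \ref{hocolim-func-mar-SS}), and finally the full faithfulness of $Ho(E^D)$ coming from the adjoint equivalence induced by $(L^D, E^D)$, one obtains
\begin{multline*}
Ho(\sSetsQ)(Ho(L\hColim{})(F), Y)
\cong Ho(\sSetsMQ)(Ho(u_! \circ \gCLM{D})(Ho(E^D)(F)), Ho(E)(Y)) \\
\cong Ho([D, \sSetsMQ])(Ho(E^D)(F), Ho(\Delta^+)(Ho(E)(Y)))
\cong Ho([D, \sSetsQ])(F, Ho(\Delta)(Y)).
\end{multline*}
Naturality in $F$ and $Y$ is inherited from each individual adjunction, so this exhibits $Ho(L\hColim{})$ as a left adjoint to $Ho(\Delta)$, as required.

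The one point that genuinely requires verification, and which I expect to be the crux of an otherwise formal argument, is the passage from the second line to the third. This rests on the identity $\Delta^+ \circ E = E^D \circ \Delta$ of functors $\sSets \to [D, \sSetsM]$: the constant diagram on $E(Y)$ is exactly $E$ applied objectwise to the constant diagram on $Y$. Since $E$, $\Delta^+$, $\Delta$ and $E^D$ are all homotopical, this passes to the homotopy categories as $Ho(\Delta^+) \circ Ho(E) = Ho(E^D) \circ Ho(\Delta)$, rewriting the middle term as $Ho(E^D)(Ho(\Delta)(Y))$; full faithfulness of $Ho(E^D)$ then cancels $Ho(E^D)$ from both arguments and yields the final line. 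The remaining work is purely bookkeeping, namely checking that the three isomorphisms are simultaneously natural in both variables, which is standard since each is a hom-set adjunction isomorphism.
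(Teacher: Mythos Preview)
Your proof is correct and follows essentially the same route as the paper: both verify homotopicality by composition and then chain the same sequence of hom-set bijections, using the $(L,E)$ adjunction, Proposition~\ref{hocolim-func-mar-SS}, the identity $\Delta^+ \circ E = E^D \circ \Delta$, and the equivalence induced by $(L^D,E^D)$. The only cosmetic difference is that where you invoke full faithfulness of $Ho(E^D)$ directly, the paper passes through the adjunction $(L^D,E^D)$ and then uses that the counit $L^D E^D \Rightarrow \mathrm{id}$ is an isomorphism; these are equivalent formulations of the same fact.
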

 \begin{proof}
 	The functor $L\hColim{}$ is a composite of three homotopical functors therefore it is homotopical. Now we consider the following chain of (natural) bijections for a pair of objects $F \in [D, \sSets]$ and $Y \in \sSets$:
 	\begin{multline*}
 	Ho_{\sSets}(L\hColim{}(F), Y) \cong Ho_{\sSetsM}(u_!\gCLM{D}E^D(F), E(Y)) \cong \\
 	Ho_{\sSetsMQ^D}(E^D(F), \Delta(E(Y))) = Ho_{\sSetsMQ^D}(E^D(F), E^D(\Delta(Y))) \cong \\
 	Ho_{\sSetsQ^D}(L^DE^D(F), \Delta(Y)) \cong Ho_{\sSetsQ^D}(F, \Delta(Y))
 	\end{multline*}

 	\end{proof}

 \appendix
 \section[Marked simplicial sets]{A review of marked simplicial sets}
\label{mar-sSets}
In this appendix we will review the theory of marked simplicial sets. Later in this paper we will develop a theory of coherently commutative monoidal objects in the category of marked simplicial sets.

\begin{df}
	\label{mar-sSet}
	A \emph{marked} simplicial set is a pair $(X, \E)$, where $X$ is a simplicial set and $\E$ is a set of edges of $X$ which contains every degenerate edge of $X$. We will say that an edge of $X$ is \emph{marked} if it belongs to $\E$.
	A morphism $f:(X, \E) \to (X', \E')$ of marked simplicial sets is a simplicial map $f:X \to X'$ having the property that $f(\E) \subseteq \E'$. We denote the category of marked simplicial sets by $\sSetsM$.
	\end{df}

Every simplicial set $S$ may be regarded as a marked simplicial set in many ways. We mention two extreme cases: We let $\Sh{S} = (S, S_1)$ denote the marked simplicial set in which every edge is marked. We denote by $\Fl{S} = (S, s_0(S_0))$ denote the marked simplicial set in which only the degerate edges of $S$ have been marked.

The category $\sSetsM$ is \emph{cartesian-closed}, \emph{i.e.} for each pair of objects $X, Y \in Ob(\sSetsM)$, there is an internal mapping object $[X, Y]^+$ equipped with an \emph{evaluation map} $[X, Y]^+ \times X \to Y$ which induces a bijection:
\[
\sSetsM(Z, [X, Y]^+) \overset{\cong} \to \sSetsM(Z \times X, Y),
\]
for every $Z \in \sSetsM$.
\begin{nota}
	We denote by $\Fl{[X, Y]}$ the underlying simplicial set of $[X, Y]^+$.
	\end{nota}
The mapping space $\Fl{[X, Y]}$ is charaterized by the following bijection:
\[
\sSets(K, \Fl{[X, Y]}) \overset{\cong} \to \sSetsM(\Fl{K} \times X, Y),
\]
for each simplicial set $K$.
\begin{nota}
	We denote by $\Sh{[X, Y]}$ the simplicial subset of $\Fl{[X, Y]}$ consisting of all simplices $\sigma \in \Fl{[X, Y]}$ such that every edge of $\sigma$ is a marked edge of $[X, Y]^+$.
\end{nota}
The mapping space $\Sh{[X, Y]}$ is charaterized by the following bijection:
\[
\sSets(K, \Sh{[X, Y]}) \overset{\cong} \to \sSetsM(\Sh{K} \times X, Y),
\]
for each simplicial set $K$.

 The Joyal model category structure on $\sSets$ has the following analog for marked simplicial sets:
 \begin{thm}
 	\label{Joyal-sSetsM}
 	There is a left-proper, combinatorial model category structure on the category of marked simplicial sets $\sSetsM$ in which a morphism $p:X \to Y$ is a
 	\begin{enumerate}
 		\item cofibration if the simplicial map between the underlying simplicial sets is a cofibration in $\sSetsQ$, namely a monomorphism.
 		
 		\item a weak-equivalence if the induced simplicial map on the mapping spaces
 		\[
 		\Fl{[p, \Nt{K}]}:\Fl{[X, \Nt{K}]} \to \Fl{[Y, \Nt{K}]}
 		\]
 		is a weak-categorical equivalence, for each quasi-category $K$.
 		
 		\item fibration if it has the right lifting property with respect to all maps in $\sSetsM$ which are simultaneously cofibrations and weak equivalences.
 		
 		\end{enumerate}
 	Further, the above model category structure is enriched over the Joyal model category, i.e. it is a $\sSetsQ$-model category.
 	\end{thm}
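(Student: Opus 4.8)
The quickest route is to recognize the statement as the special case of Theorem~\ref{CC-Mdl-Str} in which $D$ is the terminal category. Then $N(D) = \Delta[0]$ and $\sSetsMGen{D} \cong \sSetsM$, because $\Delta[0]$ is the terminal object of $\sSetsM$. My plan is to check that, under this identification, the three distinguished classes of Theorem~\ref{CC-Mdl-Str} reduce to the ones listed here. Cofibrations are the monomorphisms in both descriptions. For the weak equivalences I would use three facts: a coCartesian fibration over $\Delta[0]$ is exactly a quasi-category $K$ (an inner fibration over a point); an edge of such a $K$ is $p$-coCartesian precisely when it is an equivalence, so that $\Nt{K} = (K, Eq)$; and the relative mapping object $\Flmap{X}{Y}{D}$ reduces to $\Fl{[X,Y]}$ when the base is $\Delta[0]$, since the compatibility-with-projection condition is then vacuous. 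With these identifications, Definition~\ref{CC-Eq} of a coCartesian equivalence becomes verbatim clause~(2), and fibrations are defined by the same lifting property in both statements. Left-properness and combinatoriality are inherited, and the $\sSetsQ$-enrichment is the $D = \ast$ case of Remark~\ref{Simp-Mdl-Cat}, whose function object $\Flmap{-}{-}{D}$ specializes to $\Fl{[-,-]}$.

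If one instead wants the appendix to be self-contained, I would construct the structure directly by the recognition theorem for combinatorial model categories \cite[Prop. A.2.6.13]{JL}. The inputs are: $\sSetsM$ is locally presentable; the monomorphisms are the weak saturation of the set $I = \{\Fl{\partial\Delta[n]} \hookrightarrow \Fl{\Delta[n]}\}_{n \ge 0} \cup \{\Fl{\Delta[1]} \hookrightarrow \Sh{\Delta[1]}\}$; and $W$ is the class in clause~(2). After reducing the proper class of test objects $\{\Nt{K}\}$ to a set by a standard cardinality argument, $W$ is the intersection over this set of the preimages of the weak categorical equivalences under the functors $\Fl{[-,\Nt{K}]}$; hence $W$ satisfies $2$-out-of-$3$, is closed under retracts, and is accessible, the last because the Joyal weak equivalences form an accessible class and accessible classes are stable under preimage along accessible functors and under small intersection. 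It then remains to verify the two compatibility hypotheses of the recognition theorem: that every map with the right lifting property against $I$ lies in $W$, and that $W \cap \mathrm{cof}(I)$ is closed under pushout and transfinite composition.

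I expect this last hypothesis to be the main obstacle. The way I would discharge it is to first identify the fibrant objects as the marked quasi-categories $\Nt{K}$, and then characterize the trivial cofibrations as exactly the maps having the left lifting property against every fibration between such fibrant objects --- this is the criterion recorded in \cite[Prop. 4.22]{sharma}. Closure of a class cut out by a left lifting property under pushouts and transfinite composition is then formal, so the genuine combinatorial work is the marked-anodyne analysis that identifies the fibrant objects. Once the model structure exists, left-properness is automatic because every object is cofibrant (all cofibrations being monomorphisms), and the $\sSetsQ$-enrichment follows from the pushout-product axiom for the bifunctor $\Fl{A} \times (-)$: the pushout-product of a monomorphism of simplicial sets with a generating marked cofibration in $I$ is again a marked cofibration, and is trivial as soon as one input is, the trivial case reducing once more to lifting against the fibrant $\Nt{K}$.
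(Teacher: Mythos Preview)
Your proposal is correct and aligns with the paper's own treatment: the paper does not give an argument but simply records that the theorem follows from \cite[Prop.~3.1.3.7]{JL}, which is precisely the source of Theorem~\ref{CC-Mdl-Str} as well, so your first route (specialize $D = \ast$) and the paper's citation are the same reference unwound. Your identification of the coCartesian data over a point with $(K,\Nt{K})$ and of $\Flmap{-}{-}{D}$ with $\Fl{[-,-]}$ is accurate, and your sketch of a self-contained construction via \cite[Prop.~A.2.6.13]{JL} is a reasonable alternative that the paper does not pursue.
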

 The above theorem follows from \cite[Prop. 3.1.3.7]{JL}.
 \begin{nota}
 	We will denote the model category structure in Theorem \ref{Joyal-sSetsM} by $\sSetsMQ$ and refer to it either as the \emph{Joyal} model category of \emph{marked} simplicial sets or as the model category of marked quasi-categories.
 	\end{nota}
 \begin{thm}
 	\label{Cart-cl-Mdl-S-plus}
 	The model category $\sSetsMQ$ is a cartesian closed model category.
 	\end{thm}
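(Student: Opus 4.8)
The plan is to verify Hovey's two conditions for a symmetric monoidal model category relative to the cartesian product, since the underlying category $\sSetsM$ has already been shown to be cartesian closed in the appendix (with internal mapping object $\mapMS{X}{Y}$ and the two-variable adjunction $\sSetsM(Z\times X,Y)\cong\sSetsM(Z,\mapMS{X}{Y})$). Concretely I must check the \emph{unit axiom} and the \emph{pushout-product axiom} for the monoidal structure whose tensor is $\times$ and whose unit is the terminal object $\Delta[0]$. First I dispose of the unit axiom: because the cofibrations of $\sSetsMQ$ are exactly the monomorphisms (Theorem \ref{Joyal-sSetsM}) and $\emptyset\hookrightarrow(X,\E)$ is a marking-preserving monomorphism for every $(X,\E)$, every object of $\sSetsMQ$ is cofibrant; in particular the unit $\Delta[0]$ is cofibrant, so the unit axiom holds trivially.

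It then remains to establish the pushout-product axiom: for cofibrations $f$ and $g$ the pushout-product (Leibniz product) $f\mathbin{\Box}g$ is a cofibration, which is acyclic as soon as $f$ or $g$ is. The \emph{cofibration} half is immediate, since a pushout-product of two monomorphisms of simplicial sets is again a monomorphism, and one checks directly that the marking induced on the domain $(A\times D)\cup_{A\times C}(B\times C)$ is carried into the marking of $B\times D$; hence $f\mathbin{\Box}g$ is again a cofibration. For the \emph{acyclic} half I would exploit that $\mathbin{\Box}$ preserves colimits in each variable and that the classes of cofibrations and of trivial cofibrations are weakly saturated: by combinatoriality of $\sSetsMQ$ it suffices to verify that $i\mathbin{\Box}j$ is a trivial cofibration whenever $i$ ranges over the generating cofibrations and $j$ over a generating set of trivial cofibrations. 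The generating cofibrations split into the \emph{flat} family $\Fl{\partial\Delta[n]}\hookrightarrow\Fl{\Delta[n]}$ together with the single \emph{marking} map $\Fl{\Delta[1]}\hookrightarrow\Sh{\Delta[1]}$.

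For the flat family the computation is already available. Each $\Fl{\partial\Delta[n]}\hookrightarrow\Fl{\Delta[n]}$ lies in the image of $\Fl{-}\colon\sSetsQ\to\sSetsMQ$, and Theorem \ref{Joyal-sSetsM} asserts that $\sSetsMQ$ is a $\sSetsQ$-model category, i.e. that the action $(K,M)\mapsto\Fl{K}\times M$ is a left Quillen bifunctor; consequently its pushout-product with any trivial cofibration $j$ is a trivial cofibration. Equivalently, across the two-variable adjunction of the appendix, this is the assertion that the bifunctor $\mapMS{-}{-}$ is a right Quillen bifunctor when the contravariant slot is restricted to flatly marked simplicial sets, so that for a fibration $q$ the Leibniz map $\mapMS{B}{X}\to\mapMS{A}{X}\times_{\mapMS{A}{Y}}\mapMS{B}{Y}$ has the expected fibrancy. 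Thus the flat generators are handled entirely by the enrichment already in hand.

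The hard part will be the remaining generator, the marking map $\Fl{\Delta[1]}\hookrightarrow\Sh{\Delta[1]}$, which is \emph{not} in the image of $\Fl{-}$ and so is untouched by the $\sSetsQ$-enrichment; this is the genuine obstacle, since it forces one to leave the purely flat (plain-simplicial-set) world of Theorem \ref{Joyal-sSetsM}. To treat it I would invoke the combinatorics of marked anodyne maps from \cite[Section 3.1]{JL}, choosing the generating set of trivial cofibrations inside the class of marked anodyne maps, and then using the stability property that the pushout-product of a marked anodyne map with an arbitrary monomorphism of marked simplicial sets is again marked anodyne. Since $\Fl{\Delta[1]}\hookrightarrow\Sh{\Delta[1]}$ is a monomorphism, its pushout-product with each generating trivial cofibration is marked anodyne, hence a trivial cofibration. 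Assembling the flat and marking cases then verifies the pushout-product axiom on generators, and together with the unit axiom and the cartesian closedness of the underlying category this completes the proof that $\sSetsMQ$ is a cartesian closed model category.
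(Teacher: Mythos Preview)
The paper's proof is a one-line citation: the result is Lurie's \cite[Corollary 3.1.4.3]{JL} specialized to $S=T=\Delta[0]$. Your proposal instead attempts a direct verification of the pushout-product axiom, and the treatment of the flat generating cofibrations $\Fl{\partial\Delta[n]}\hookrightarrow\Fl{\Delta[n]}$ via the already-established $\sSetsQ$-enrichment is correct and pleasant.

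There is, however, a genuine gap in your handling of the marking generator $\Fl{\Delta[1]}\hookrightarrow\Sh{\Delta[1]}$. You propose to ``choose the generating set of trivial cofibrations inside the class of marked anodyne maps'', but this cannot be done: the weakly saturated class generated by the marked anodyne maps is \emph{strictly smaller} than the class of trivial cofibrations of $\sSetsMQ$. Concretely, the map $\Sh{i}:\Sh{\Delta[1]}\to\Sh{J}$ of Proposition \ref{inc-J-acy-cof} is a trivial cofibration, yet it is not marked anodyne. Indeed, one checks directly from Lurie's list of generators that the terminal map $\Sh{\Delta[1]}\to\Delta[0]$ has the right lifting property against every marked anodyne map; were $\Sh{i}$ marked anodyne, this would force a retraction $\Sh{J}\to\Sh{\Delta[1]}$ of $\Sh{i}$, which is impossible since every simplicial map $J\to\Delta[1]$ is constant. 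Equivalently, right lifting against marked anodynes detects only fibrations with \emph{fibrant} codomain (this is the content of \cite[Prop.~3.1.1.6]{JL}), not arbitrary fibrations, so no set of marked anodynes can generate the trivial cofibrations.

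Repairing your argument therefore requires an additional reduction step---showing that for the pushout-product axiom it suffices to test the adjoint pullback-hom condition against fibrations between fibrant objects. Such a reduction is available (using left properness and the explicit description of weak equivalences via mapping into fibrant objects), but it is not automatic and is precisely the substance of the result \cite[Cor.~3.1.4.3]{JL} that the paper cites. So while your outline captures the right ingredients, as written it stops exactly where the real work begins.
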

 \begin{proof}
 	The theorem follows from \cite[Corollary 3.1.4.3]{JL} by taking $S = T = \Delta[0]$.
 	\end{proof}
 
 There is an obvious forgetful functor $U:\sSetsM \to \sSets$. This forgetful functor has a left adjoint $\Fl{(-)}:\sSets \to \sSetsM$.
 \begin{thm}
 	\label{Quil-eq-JQ-MJQ}
 	The adjoint pair of functors $(\Fl{(-)}, U)$ determine a Quillen equivalence between the Joyal model category of marked simplicial sets and the Joyal model category of simplicial sets.
 	\end{thm}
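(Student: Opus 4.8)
The plan is to first check that $(\Fl{(-)}, U)$ is a Quillen adjunction and then to verify the adjunction criterion of \cite[Prop. 1.3.13]{Hovey}: since every object of both $\sSetsQ$ and $\sSetsMQ$ is cofibrant (cofibrations are monomorphisms), it suffices to show that for each simplicial set $S$ and each \emph{fibrant} object $W$ of $\sSetsMQ$, a map $\Fl{S} \to W$ is a weak equivalence if and only if its adjunct $S \to U(W)$ is a weak categorical equivalence. The computational heart of the argument is the identity $\Fl{A} \times \Fl{S} \cong \Fl{(A \times S)}$ (the degenerate edges of a product are exactly the pairs of degenerate edges), together with the characterizations of the mapping spaces $\Fl{[-,-]}$ recorded in this appendix.

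First I would show $\Fl{(-)}$ is left Quillen. It clearly preserves cofibrations, since it is the identity on underlying simplicial sets and monomorphisms are sent to monomorphisms. To see that it preserves weak equivalences, fix a quasi-category $K$ and compute, for any simplicial set $S$,
\[
\Fl{[\Fl{S}, \Nt{K}]}_n \cong \sSetsM(\Fl{\Delta[n]} \times \Fl{S}, \Nt{K}) \cong \sSetsM(\Fl{(\Delta[n] \times S)}, \Nt{K}) \cong \sSets(\Delta[n] \times S, K),
\]
the last bijection holding because $\Fl{(\Delta[n] \times S)}$ has only degenerate edges marked, so the marking condition on a morphism into $\Nt{K}$ is automatic; hence $\Fl{[\Fl{S}, \Nt{K}]} \cong [S, K]$ naturally in $S$. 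For a map $f : S \to T$ the induced map $\Fl{[\Fl{f}, \Nt{K}]}$ is thus identified with $[f, K] : [T, K] \to [S, K]$. By the definition of the weak equivalences of $\sSetsMQ$ (Theorem \ref{Joyal-sSetsM}), $\Fl{f}$ is a weak equivalence iff $[f, K]$ is a weak categorical equivalence for every quasi-category $K$, which by the corresponding internal-hom characterization of weak categorical equivalences in $\sSetsQ$ is exactly the statement that $f$ is a weak categorical equivalence. Thus $\Fl{(-)}$ preserves (indeed reflects) all weak equivalences, so in particular it is left Quillen.

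For the equivalence, recall that the fibrant objects of $\sSetsMQ$ are precisely the marked quasi-categories $\Nt{K}$ with $K$ a quasi-category (see \cite[Prop. 3.1.4.1]{JL}), and that under the adjunction a morphism $\Fl{S} \to \Nt{K}$ and its adjunct $S \to U(\Nt{K}) = K$ are the \emph{same} underlying simplicial map $g$. Repeating the computation above gives $\Fl{[\Fl{S}, \Nt{K'}]} \cong [S, K']$, while
\[
\Fl{[\Nt{K}, \Nt{K'}]}_n \cong \sSetsM(\Fl{\Delta[n]} \times \Nt{K}, \Nt{K'}) \cong \sSets(\Delta[n] \times K, K') \cong [K, K']_n,
\]
where the middle bijection uses Proposition \ref{Smap-Eq-to-Eq}: a marked edge of $\Fl{\Delta[n]} \times \Nt{K}$ is a pair $(s_0 a, e)$ with $e$ an equivalence of $K$, and its image is the value of $e$ under a simplicial map into $K'$, hence again an equivalence, so the marking condition is automatic. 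Under these identifications the map $\Fl{[g, \Nt{K'}]}$ becomes $[g, K'] : [K, K'] \to [S, K']$, so $g : \Fl{S} \to \Nt{K}$ is a weak equivalence iff $[g, K']$ is a weak categorical equivalence for all $K'$ iff $g : S \to K$ is a weak categorical equivalence, i.e.\ iff its adjunct is a weak equivalence in $\sSetsQ$. This is exactly the criterion of \cite[Prop. 1.3.13]{Hovey}.

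I expect the main obstacle to be the invocation of the internal-hom characterization of weak categorical equivalences (that $f$ is a weak categorical equivalence iff $[f, K]$ is one for every quasi-category $K$), which mirrors the very definition of the weak equivalences of $\sSetsMQ$ and is precisely what makes the two conditions line up; the classification of the fibrant objects as the $\Nt{K}$ and the automatic preservation of markings supplied by Proposition \ref{Smap-Eq-to-Eq} are the remaining inputs that must be handled carefully, the rest of the argument being formal.
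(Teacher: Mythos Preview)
Your argument is correct. The paper does not give its own proof: it simply records that the statement follows from \cite[Prop.~3.1.5.3]{JL}. Your proposal, by contrast, supplies a direct verification using the very definitions recorded in this appendix together with Hovey's criterion \cite[Prop.~1.3.13]{Hovey}. The key computations $\Fl{[\Fl{S},\Nt{K}]}\cong[S,K]$ and $\Fl{[\Nt{K},\Nt{K'}]}\cong[K,K']$ are handled correctly; in particular, your use of Proposition~\ref{Smap-Eq-to-Eq} to see that the marking condition is automatic for maps $\Delta[n]\times K\to K'$ (by restricting along each vertex of $\Delta[n]$) is the right move. The one genuinely external ingredient you flag, that a map $f$ in $\sSets$ is a weak categorical equivalence if and only if $[f,K]$ is one for every quasi-category $K$, is indeed needed and is a standard characterization in the Joyal model structure (it is, for instance, the $S=\Delta[0]$ case of the enrichment statements underlying Theorem~\ref{Joyal-sSetsM}); once granted, the rest of your argument is formal. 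What your approach buys over the paper's citation is a self-contained demonstration that the weak equivalences of $\sSetsMQ$ restrict along $\Fl{(-)}$ and along $\Nt{(-)}$ to exactly the weak categorical equivalences, which makes the Quillen equivalence transparent.
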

 The proof of the above theorem follows from \cite[Prop. 3.1.5.3]{JL}.
 \begin{rem}
 	A marked simplicial set $X$ is fibrant in $\sSetsMQ$ if and only if it is a quasi-category with the set of all its equivalences as the set of marked edges.
 	\end{rem}
 The following proposition brings out a very important distinction between the model category $\sSetsMQ$ and $\sSetsQ$:
 \begin{prop}
 	\label{inc-J-acy-cof}
 	The inclusion map
 	\[
 	\Sh{i}:\Sh{\Delta[1]} \to \Sh{J}
 	\]
 	is an acyclic cofibration in the model category $\sSetsMQ$.
 	\end{prop}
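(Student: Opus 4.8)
The plan is to combine the lifting characterisation of acyclic cofibrations with two classical facts about the maximal Kan subcomplex (``core'') of a quasi-category. First, by Theorem \ref{Joyal-sSetsM}(1) the underlying map $i:\Delta[1]\to J$ is a monomorphism, so $\Sh{i}$ is a cofibration in $\sSetsMQ$; it therefore remains to prove that it is a weak equivalence. Rather than unwinding the defining condition of Theorem \ref{Joyal-sSetsM}(2) on the mapping spaces $\Fl{[-,\Nt{K}]}$ directly, I would invoke \cite[Prop. 4.22]{sharma} exactly as in the proof of Lemma \ref{inc-acy-cof}: a cofibration in $\sSetsMQ$ is acyclic if and only if it has the left lifting property with respect to every fibration between fibrant objects. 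Thus the whole problem reduces to solving a single lifting problem.

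So let $q:\Nt{X}\to\Nt{Y}$ be a fibration between fibrant objects; by the remark following Theorem \ref{Quil-eq-JQ-MJQ}, $X$ and $Y$ are quasi-categories and their marked edges are precisely the equivalences. Suppose given $a:\Sh{\Delta[1]}\to\Nt{X}$ and $b:\Sh{J}\to\Nt{Y}$ with $q\circ a=b\circ\Sh{i}$. Since every edge of $\Sh{\Delta[1]}$ and of $\Sh{J}$ is marked, the underlying maps of $a$ and $b$ send every edge to an equivalence, and hence factor through the maximal Kan subcomplexes $k(X)\subseteq X$ and $k(Y)\subseteq Y$ respectively. The square therefore restricts to a lifting problem in $\sSets$ between the inclusion $i:\Delta[1]\to J$ and the map $k(q):k(X)\to k(Y)$ induced by $q$ on cores; moreover, any solution $J\to k(X)$ of this restricted problem sends every edge to an equivalence and so defines a map $\Sh{J}\to\Nt{X}$ that solves the original problem.

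Two facts then finish the argument. On the one hand, $\Delta[1]$ and $J$ are both weakly contractible, so $i$ is a monomorphism which is a weak homotopy equivalence, hence an anodyne map, and in particular has the left lifting property against every Kan fibration. On the other hand, a fibration between fibrant objects of $\sSetsMQ$ restricts to a Kan fibration $k(q):k(X)\to k(Y)$ on cores. I expect the verification of this second fact to be the main obstacle: it amounts to showing that such a $q$ is, on underlying simplicial sets, a categorical (iso)fibration of quasi-categories, and that categorical fibrations restrict to Kan fibrations between the maximal Kan subcomplexes, a result I would cite from \cite[Ch. 3]{JL} together with the standard theory of the Joyal model structure. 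Granting these two facts, the lift $J\to k(X)$ exists, which produces the desired lift $\Sh{J}\to\Nt{X}$ and shows that $\Sh{i}$ has the required left lifting property, completing the proof.
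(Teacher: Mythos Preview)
Your argument is correct, but the paper takes a shorter route. Both proofs begin identically, invoking \cite[Prop.~4.22]{sharma} to reduce to checking the left lifting property against a fibration $p$ between fibrant objects. From there you pass to the \emph{core}: maps out of $\Sh{\Delta[1]}$ and $\Sh{J}$ into $\Nt{X}$, $\Nt{Y}$ factor through the maximal Kan subcomplexes $k(X)$, $k(Y)$, and you then solve the resulting lifting problem by appealing to two external facts --- that $i:\Delta[1]\to J$ is anodyne, and that an isofibration of quasi-categories restricts to a Kan fibration on cores. The paper instead uses the adjunction $(L,E)$ of Section~\ref{hoColim-QCat}: since a fibration between fibrant objects satisfies $p = EU(p)$, the lifting problem for $\Sh{i}$ against $p$ transposes to a lifting problem for $L(\Sh{i})$ against $U(p)$; but $L(\Sh{\Delta[1]})\cong J$ and $L(\Sh{J})\cong J$, so $L(\Sh{i})$ is an isomorphism and the lift is trivial.

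The paper's argument is slicker and entirely internal to the machinery already set up, avoiding any appeal to the theory of cores or to the (true but nontrivial) fact that categorical fibrations induce Kan fibrations on maximal sub-Kan-complexes. Your approach, by contrast, is more portable --- it would work in any setting where one has a ``core'' right adjoint to a groupoid inclusion --- but at the cost of importing results you would have to locate precisely in \cite{JL} or Joyal's notes.
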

 \begin{proof}
 	In light of \cite[Prop. 4.22]{sharma} it is sufficient to show that $\Sh{i}$ has the left lifting property with respect to every fibration between fibrant objects in $\sSetsMQ$. Let $p:X \to Y$ be such a fibration. We observe that $p = EU(p)$. By adjointness the aforementioned left lifting property is equivalent to $U(p)$ having left lifting property with respect to $L(\Sh{i})$. The proposition now follows from the observation that the map $L(\Sh{i})$ is a canonical isomorphism in $\sSetsQ$.
 	\end{proof}
 The above proposition does NOT hold in $\sSetsQ$ \emph{i.e.} the inclusion map $i:\Delta[1] \to J$ is a cofibration but it is NOT acyclic in $\sSetsQ$.

 \bibliographystyle{amsalpha}
\bibliography{GrConsQCat}

\providecommand{\bysame}{\leavevmode\hbox to3em{\hrulefill}\thinspace}
\providecommand{\MR}{\relax\ifhmode\unskip\space\fi MR }
\providecommand{\MRhref}[2]{%
  \href{http://www.ams.org/mathscinet-getitem?mr=#1}{#2}
}
\providecommand{\href}[2]{#2}
\begin{thebibliography}{DHMS04}

\bibitem[Cis19]{DC}
Denis-Charles Cisinski, \emph{Higher categories and homotopical algebra},
  Cambridge Studies in Advanced Mathematics, Cambridge University Press, 2019.

\bibitem[DHMS04]{DHKS}
William~G. Dwyer, P.~S. Hirschhorn, Kan~D. M, and J.~H. Smith, \emph{Homotopy
  limit functors on model categories and homotopical categories}, 2004.

\bibitem[Dwy95]{DS95}
J.~Dwyer, W. G.and~Spalinski, \emph{Homotopy theories and model categories}, in
  Handbook of Algebraic Topology, 1995.

\bibitem[GJ99]{GJ}
P.~G. Goerss and J.~F. Jardine, \emph{Simplicial homotopy theory}, Birkhauser
  Verlag, 1999.

\bibitem[Hir02]{Hirchhorn}
Phillip~S. Hirchhorn, \emph{Model categories and their localizations},
  Mathematical Surveys and Monographs, vol.~99, Amer. Math. Soc., Providence,
  RI, 2002.

\bibitem[HM15]{HM}
Gijs Heuts and Ieke Moerdijk, \emph{Left fibrations and homotopy colimits},
  Mathematische Zeitschrift \textbf{279} (2015), no.~3, 723--744.

\bibitem[Hov99]{Hovey}
M.~Hovey, \emph{Model categories}, Mathematical Surveys and Monographs,
  vol.~63, Amer. Math. Soc., Providence, RI, 1999.

\bibitem[Joy08a]{AJ2}
A.~Joyal, \emph{Notes on quasi-categories},
  \url{http://www.math.uchicago.edu/~may/IMA/Joyal.pdf}, 2008.

\bibitem[Joy08b]{AJ1}
\bysame, \emph{Theory of quasi-categories and applications},
  \url{http://mat.uab.cat/~kock/crm/hocat/advanced-course/Quadern45-2.pdf},
  2008.

\bibitem[Lur09]{JL}
Jacob Lurie, \emph{Higher topos theory}, Annals of Mathematics Studies, vol.
  170, Princeton University Press, Princeton, NJ, 2009.

\bibitem[Mey84]{Meyer}
J.~P. Meyer, \emph{Bar and cobar constructions {I}}, J. Pure and Applied
  Algebra \textbf{33} (1984), 163--207.

\bibitem[Sha20]{sharma}
A.~Sharma, \emph{Symmetric monoidal categories and {$\Gamma$}-categories}, Th.
  and Appl. of categories \textbf{35} (2020), no.~14, 417--512.

\bibitem[Shu06]{Shul}
M.~Shulman, \emph{Homotopy limits and colimits and enriched homotopy theory},
  arXiv:math/0610194v3, 2006.

\bibitem[Tho79]{Thomason3}
R.~W. Thomason, \emph{Homotopy colimits in the category of small categories},
  Mathematical Proceedings of the Cambridge Philosophical Society \textbf{85}
  (1979), no.~1, 91?109.

\bibitem[Tho80]{Thomason2}
\bysame, \emph{Cat as a closed model category}, Cahiers Topologie Geom.
  Differential \textbf{21} (1980), no.~3, 305--324.

\end{thebibliography}

\end{document}